\documentclass[11pt,a4paper]{article}

\usepackage{titlesec}
\usepackage{fancyhdr}
\usepackage{a4wide}
\usepackage{graphicx}
\usepackage{float}
\usepackage{amssymb}
\usepackage{amsmath}
\usepackage{amsfonts}
\usepackage{amsthm}
\usepackage{color}
\usepackage{mathrsfs}
\usepackage{array}
\usepackage{eucal}
\usepackage{tikz}
\usepackage[T1]{fontenc}
\usepackage{inputenc}
\usepackage[english]{babel}
\usepackage{lmodern}
\usepackage{hyperref}
\usepackage{geometry}
\usepackage{changepage}
\usepackage{bm}
\geometry{hmargin=2.2cm, vmargin=2.2cm }
\changepage{0pt}{}{}{}{}{0pt}{}{0pt}{6pt}
\usepackage[numbers]{natbib}
\setlength{\bibsep}{0.0pt}

\usepackage{hyperref}
\hypersetup{
pdfpagemode=none,
pdftoolbar=true,        
pdfmenubar=true,        
pdffitwindow=false,     
pdfstartview={Fit},    
pdftitle={A continuation method for building invisible obstacles in waveguides},    
pdfauthor={A. Bera, A.-S. Bonnet-BenDhia, L. Chesnel},     
pdfsubject={},  
pdfcreator={A. Bera, A.-S. Bonnet-BenDhia, L. Chesnel},   
pdfproducer={A. Bera, A.-S. Bonnet-BenDhia, L. Chesnel}, 
pdfkeywords={}, 
pdfnewwindow=true,      
colorlinks=true,       
linkcolor=magenta,          
citecolor=red,        
filecolor=cyan,      
urlcolor=blue           
}

\newcommand{\dsp}{\displaystyle}

\newcommand{\eps}{\varepsilon}
\newcommand{\om}{\omega}
\newcommand{\Om}{\Omega}
\newcommand{\mrm}[1]{\mathrm{#1}}

\newcommand{\Cplx}{\mathbb{C}}
\newcommand{\N}{\mathbb{N}}
\newcommand{\R}{\mathbb{R}}

\newcommand{\mL}{\mrm{L}}
\newcommand{\mH}{\mrm{H}}

\newcommand{\mX}{\mrm{X}}

\newcommand{\loc}{\mbox{\scriptsize loc}}

\renewcommand{\ker}{\mrm{ker}}

\newcommand{\G}{G}
\newcommand{\K}{K}
\newcommand{\radius}{r}

\newtheorem{theorem}{Theorem}[section]
\newtheorem{lemma}[theorem]{Lemma}
\newtheorem{remark}[theorem]{Remark}

\newtheorem{proposition}[theorem]{Proposition}

\begin{document}

~\vspace{0.0cm}
\begin{center}
{\sc \bf\huge A continuation method for building \\[6pt] invisible obstacles in waveguides}
\end{center}

\begin{center}
\textsc{Antoine Bera}$^1$, \textsc{Anne-Sophie Bonnet-BenDhia}$^1$, \textsc{Lucas Chesnel}$^2$\\[16pt]
\begin{minipage}{0.96\textwidth}
{\small
$^1$ Laboratoire  Poems,  CNRS/INRIA/ENSTA Paris, Institut  Polytechnique de Paris, 828 Boulevard des Mar\'echaux, 91762 Palaiseau, France;\\
$^2$ INRIA/Centre de math\'ematiques appliqu\'ees, \'Ecole Polytechnique,  Institut  Polytechnique de Paris, Route de Saclay, 91128 Palaiseau, France.\\[10pt]
E-mails: \texttt{anne-sophie.bonnet-bendhia@ensta-paris.fr}, \texttt{lucas.chesnel@inria.fr}\\[-14pt]
\begin{center}
(\today)
\end{center}
}
\end{minipage}
\end{center}
\vspace{0.4cm}

\noindent\textbf{Abstract.} We consider the propagation of acoustic waves at a given wavenumber in a waveguide which is unbounded in one direction.  We explain how to construct penetrable obstacles characterized by a physical coefficient $\rho$ which are invisible in various ways. In particular, we focus our attention on invisibility in reflection (the reflection matrix is zero), invisibility in reflection and transmission (the scattering matrix is the same as if there were no obstacle) and relative invisibility (two different obstacles have the same scattering matrix). To study these problems, we use a continuation method which requires to compute the scattering matrix $\mathbb{S}(\rho)$ as well as its differential with respect to the material index $d\mathbb{S}(\rho)$. The justification of the method also needs for the proof of abstract results of ontoness of well-chosen functionals constructed from the terms of $d\mathbb{S}(\rho)$. We provide a complete proof of the results in monomode regime when the wavenumber is such that only one mode can propagate. And we give all the ingredients to implement the method in multimode regime. We end the article by presenting numerical results to illustrate the analysis.\\

\noindent\textbf{Key words.} Waveguide, scattering matrix, asymptotic analysis, invisibility.

\section{Introduction}\label{Introduction}

We consider an acoustic waveguide $\Om$ which is unbounded in one direction. We assume that it contains a bounded penetrable obstacle characterized by a physical coefficient $\rho_0$ and that the propagation of waves is governed by the Helmholtz equation $\Delta u + k^2(1+\rho_0) u=0$ together with homogeneous Neumann boundary conditions. We work at a given wavenumber $k>0$ so that a finite number of modes can propagate in $\Om$. In general, the presence of the obstacle perturbs the propagation of modes resulting in reflection phenomena on one side of the obstacle and conversion phenomena on the other side. We denote by $\mathbb{S}(\rho_0)$ the corresponding scattering matrix whose entries are the reflection and transmission coefficients on these propagating modes. In absence of obstacle, that is when $\rho_0\equiv0$, waves propagate through the structure without being scattered. The initial motivation of this article is to construct invisible obstacles, that is to find $\rho\not\equiv0$ such that $\mathbb{S}(\rho)=\mathbb{S}(0)$. In this case, up to some remainders which are exponentially decaying at infinity, the fields are the same in the waveguide with and without obstacle. A less ambitious objective is to construct obstacles which are simply non reflecting. In this case, we want to find $\rho\not\equiv0$ such that the reflection coefficients are all zero.\\
\newline
More generally, let us extract from $\mathbb{S}(\rho_0)$ a certain number of coefficients that we want to control and let us gather them in the vector $\G(\rho_0)$. The goal of this article is to explain how to find $\rho\not\equiv\rho_0$, where $\rho_0$ is given, such that $\G(\rho)=\G(\rho_0)$. The difficulty in this work lies in the fact that the dependence of the scattering coefficients with respect to $\rho$ is not simple, in particular it is not linear. To solve  our problem, as proposed in \cite{na582} to deal with the problem of invisibility, we shall use a continuation method. Let us describe the methodology.\\
\newline
First, we select some elements from $\G$ and we gather them in a vector $F$ such that  
 the relation $F(\rho)=F(\rho_0)$ guarantees that the identity $\G(\rho)=\G(\rho_0)$ holds. One could be tempted to gather in $F$ all the elements of $\G$. However in general this is not so simple because in order the method below to work, we need $dF(\rho_0)$, the differential of $F$ at $\rho_0$, to be onto in some spaces to define. But $F$ is constructed from $\G$, and so from $\mathbb{S}$ which is unitary and symmetric. Therefore, there is a strong structure for the scattering coefficients and one has to choose carefully the elements of $F$ to avoid to have redundant information. We emphasize that deciding which $F$ to consider in general is not straightforward. The second step in the procedure consists in adapting the proof of the implicit function theorem by looking for $\rho$ such that $F(\rho)=F(\rho_0)$ and such that $\rho$ is a small perturbation of $\rho_0$. More precisely, let us look for $\rho$ as $\rho=\rho_0+\eps\mu$ where $\eps>0$ is small and where $\mu$ has to be determined. Then a Taylor expansion gives 
\begin{equation}\label{EqnPtFixe}
F(\rho_0+\eps\mu)=F(\rho_0)+\eps\,dF(\rho_0)(\mu)+\eps^2\widetilde{F}^{\eps}(\mu),
\end{equation}
where $\widetilde{F}^{\eps}(\mu)$ is an abstract remainder which depends non-linearly on $\eps$, $\mu$. Finally, the last step consists in finding a non zero $\mu^{\mrm{sol}}$ in some appropriate set of functions, such that $dF(\rho_0)(\mu^{\mrm{sol}})=-\eps\,\widetilde{F}^{\eps}(\mu^{\mrm{sol}})$ by solving a fixed point equation. Then from (\ref{EqnPtFixe}), we see that this implies $F(\rho_0+\eps\mu^{\mrm{sol}})=F(\rho_0)$. We emphasize that a priori $\eps$ has to be small to guarantee that the operator appearing in the right hand side of the fixed point equation is a contraction mapping. For this reason, what we construct are invisible perturbations which are, a priori, of small amplitude. Note that since $dF(\rho_0)$ is surely not injective (it is a linear map between an infinite dimensional space and a finite dimensional one), there are in fact infinitely many solutions of $F(\rho_0+\eps\mu^{\mrm{sol}})=F(\rho_0)$. This confers some flexibility to the algorithm, which can be exploited to impose some constraints of feasibility to the obstacle that is built.\\ 
\newline
This idea was introduced in \cite{BoNa13} (see also \cite{BLMN15} for numerical examples) to construct non reflecting perturbations of the wall, instead of a penetrable obstacle, in monomode regime. Note that in monomode regime, the scattering matrix is of the form 
\[
\mathbb{S}=\left(\begin{array}{cc}
R^+ & T\\
T & R^-
\end{array}
\right)\in\Cplx^{2\times2},
\]
where $R^{\pm}$ are reflection coefficients and where $T$ is a transmission coefficient. From conservation of energy, we have $|R^{\pm}|+|T|^2=1$. Non reflecting obstacles are such that $R^{\pm}=0$, and therefore such that $|T|=1$. If we do not impose $T=1$, there is a possible phase shift between the incident and transmitted fields. In \cite{ChNa16}, it is shown how this phase shift can be removed by working with singular perturbations of the walls instead of smooth ones, achieving $T=1$ (invisibility in reflection and transmission).  For the construction of families of small obstacles which are collectively non reflecting, we refer the reader \cite{na648}; for an application to water-waves, see \cite{na582}; for the construction of invisible penetrable obstacles in free space and for a problem appearing in medical imaging, see respectively \cite{BoCN15} and \cite{ChHS15}.\\
\newline
In the present work, we extend the above mentioned works in the following directions. First, we provide results in multimode regime. Second, we explain how to reiterate the process. More precisely, we said above that what we obtain are small invisible perturbations. But, once an invisible obstacle has been constructed, a natural idea is to use it as a starting point to get larger invisible defects. Note that preliminary results to address this problem can be found in \cite{na582}. The implementation of this procedure requires to solve various questions. In particular, we have to compute the differential of $F$ at a point $\rho_0$. Then, and this is the most difficult point, we have to prove that $dF(\rho_0)$ is onto. When $\rho_0\equiv0$, we get explicit formulas and the ontoness of $dF(0)$ can be established quite directly, at least in monomode regime. On the other hand, when $\rho_0\not\equiv0$, the expression of $dF(\rho_0)$ involves abstract functions and the ontoness of $dF(\rho_0)$ is not obvious. Clarifying completely this question in monomode regime is the main outcome of this article.\\ 
\newline 
In the present work, we also study a question of relative invisibility. From a general point of view, for any $\rho_0$ given, an interesting objective is to construct $\rho\not\equiv\rho_0$ such that $\mathbb{S}(\rho)=\mathbb{S}(\rho_0)$. This means that the two obstacles, with coefficients $\rho$ and $\rho_0$, are indistinguishable by using standard scattering measurements. To do that, first we have to understand how to define $F$ as explained above so that the relation $F(\rho)=F(\rho_0)$ implies $\mathbb{S}(\rho)=\mathbb{S}(\rho_0)$. This will oblige us to understand finely the consequences of the structure of $\mathbb{S}$ on the properties of its differential. We will see that the choice of $F$ depends on the value of $\mathbb{S}(\rho_0)$.\\
\newline
Questions of invisibility in waveguides are studied in particular in the context of Perfect Transmission Resonances (PTRs), see e.g. \cite{Shao94,PoGP99,LeKi01,Zhuk10,MrMK11}. For mathematical approaches different from the continuation method, we refer the reader to \cite{ChNPSu,ChPaSu,ChPa19} and \cite{ShVe05,ShTu12,ShWe13,AOMFM14,AbSh16,ChNa18}. All these studies work only in monomode regime and rely on symmetry properties of the geometry. In multimode regime, one can solve spectral problems with ingoing/outgoing conditions at infinity \cite{BoCP18,SwHS19}.  In order to attack these problems of invisibility, one can also use techniques of optimization (see e.g. \cite{LDOHG19,LGHDO19}). However, the functionals which are involved in the process are not convex and local minima exist. In other words, they only provide approximated invisible obstacles. The continuation method we consider is a bit more restrictive but offers the advantage of providing exact solutions. For other literature concerning invisibility, we can also read the article \cite{PaMB17}.\\
\newline 
The article is organized as follows. First, we introduce the setting, present the different problems that we will consider and describe the mechanism of the continuation method. Then in Section \ref{SectionS}, we give an explicit form for the scattering matrix and we compute its differential with respect to the material index. Then we implement the continuation method in monomode regime in Section \ref{SectionMono}. More precisely, we show results of ontoness for the differentials of some well-chosen functionals constructed from the elements of the scattering matrix. 
In Section \ref{SectionNumerics}, we give numerical illustrations of the results. Then we explain how to impose additional constraints on the invisible obstacles we construct in Section \ref{SectionConstraints}. Finally, we end with some concluding remarks and open questions.

\section{Setting}\label{setting}

\subsection{The scattering problem}

\begin{figure}[!ht]
\centering
\begin{tikzpicture}[scale=0.92]
\draw[fill=gray!16,draw=none](-3,0) rectangle (3,2);
\draw (-3,0)--(3,0);
\draw (-3,2)--(3,2);

\draw [dashed](-3,0)--(-4,0);
\draw [dashed](3,0)--(4,0);
\draw [dashed](-3,2)--(-4,2);
\draw [dashed](3,2)--(4,2);
\node at (-2.7,0.2){\small$\Omega$};
\draw [fill=gray!90,draw=none] plot [smooth cycle, tension=1] coordinates {(-0.6,0.9) (0,0.5) (0.7,1) (0.5,1.5) (-0.2,1.4)};
\node at (0,1){\small$\rho\ne 0$};
\node at (2.3,1){\small$\rho= 0$};
\draw (-1.7,-0.1)--(-1.7,0.1) ;
\draw (1.7,-0.1)--(1.7,0.1);
\node at (-1.7,-0.4){\small$-\ell$};
\node at (1.7,-0.4){\small$+\ell$};
\end{tikzpicture}
\caption{Setting. \label{FigureSetting}} 
\end{figure}

\noindent In this work, we are interested in the propagation of acoustic waves in time-harmonic regime in the waveguide $\Om = \{z=(x,y)\in\R\times(0;1)\}$ in presence of a penetrable obstacle. This leads us to consider the equations
\begin{equation}\label{InitialPb}
\begin{array}{|rl}
\Delta u + k^2(1+\rho) u = 0 & \mbox{ in }\Om \\[3pt]
\partial_{y} u=0 & \mbox{ on }\partial\Om.
\end{array}
\end{equation}
In (\ref{InitialPb}), $u$ is the complex valued acoustic pressure and $k:=\om/c$ denotes the wavenumber, $\om$ being the pulsation and $c$ the sound speed. Moreover, $\Delta$ stands for the 2D Laplace operator. Finally, $\rho\in\mL^{\infty}(\Om)$ is a real-valued physical coefficient which characterizes the obstacle. It is such that $\rho=0$ in $\Om\setminus\mathscr{O}$ where here and up to the end of the document, $\mathscr{O}$ is a  given non-empty bounded open set. In what follows, we will often identify the obstacle to the physical coefficient $\rho$. Additionally, with a slight abuse, we shall make no distinction between the elements of $\mL^{\infty}(\mathscr{O})$ and their extensions by zero to $\Om$ writing for example that $\rho\in\mL^{\infty}(\mathscr{O})$. We introduce a parameter $\ell>0$ such that $\rho(x,y)=0$ for $|x|\ge \ell$ (see Figure \ref{FigureSetting}). Using separation of variables, one can compute the solutions of the unperturbed problem 
\begin{equation}\label{UnperturbedPb}
\begin{array}{|rl}
\Delta u + k^2 u = 0 & \mbox{ in }\Om \\[3pt]
\partial_{y} u=0 & \mbox{ on }\partial\Om
\end{array}
\end{equation}
which are called the \textit{modes} of the waveguide. More precisely, setting 
  for $n\in\N$,
\begin{equation}\label{defCos}
\varphi_n(y)=\alpha_n\cos(n\pi y)\quad\mbox{ with }\quad\begin{array}{|lcl}
\alpha_0&=&1 \\
\alpha_n&=&\sqrt{2}\quad\mbox{for }n\ge1,
\end{array}
\end{equation}
the modes are  defined as follows:
\begin{equation}\label{defModes}
w^{\pm}_n(x,y)= (2|\beta_n|)^{-1/2} e^{\pm i \beta_n x}\varphi_n(y)\quad \mbox{ with }\ \beta_n:=\sqrt{k^2-n^2\pi^2}.
\end{equation}
Here the complex square root is chosen so that if $\gamma=r e^{i\eta}$ with $r\ge 0$ and $ \eta \in[0;2\pi)$, then $\sqrt{\gamma}=\sqrt{r}e^{i\gamma/2}$. In (\ref{defModes}), the normalization coefficients  are chosen so that the scattering matrix defined in (\ref{defScatteringMatrix}) is unitary. In the following, we shall assume that the wavenumber $k$ satisfies $(N-1)\pi<k<N\pi$ for some $N\in\N$. Then, according to the value of $n$, the modes $w^{\pm}_n$ adopt different behaviours. For $n\ge N$ we have $\beta_n=i \sqrt{n^2\pi^2-k^2}$ and the function $w^{+}_n$ (resp. $w^{-}_n$) decays exponentially at $+\infty$ (resp. $-\infty$) while it grows exponentially at $-\infty$ (resp. $+\infty$). For $n\in\{0,\dots,N-1\}$, with a convention in time in $e^{-i\om t}$, the function $w^{+}_n$ (resp. $w^{-}_n$) corresponds to a right-going (resp. left-going) wave. The cut-off values $k=n\pi$ for the wavenumber will not be considered in this work.\\
\newline
Let us consider now the perturbed situation (\ref{InitialPb}). Suppose that for some $n$, $0\le n \le N-1$, the wave $w^+_{n}$ (resp. $w^-_{n}$) travels from $-\infty$ (resp. $+\infty$), in the positive (resp. negative) direction of the $(Ox)$ axis and is scattered by the obstacle. Then the total field $u$ satisfies the equations (\ref{InitialPb}), which must be supplemented with radiation conditions at $\pm\infty$. We will say that a function $v\in\mrm{H}^1_{\loc}(\Om)$ which satisfies  (\ref{InitialPb}) is \textit{outgoing} if it admits the decompositions
\begin{equation}\label{scatteredField}
v =  \sum_{n=0}^{N-1} s^{-}_{n}\,w^-_n+\tilde{v}\quad\mbox{ for }x\le -\ell\qquad\mbox{ and }\qquad v =  \sum_{n=0}^{N-1}s^{+}_{n}\,w^+_n+ \tilde{v}\quad\mbox{ for }x\ge +\ell,
\end{equation}
for some constants $s^{\pm}_n\in\Cplx$ and some $\tilde{v}\in\mH^1(\Om)$. Using Fourier decomposition, one can verify that the remainder $\tilde{v}$ in (\ref{scatteredField}) decomposes only on the exponentially decaying modes at infinity so that $\tilde{v}$ is exponentially decaying. Now let $u_i$ be a combination of the propagating modes $w_n^{\pm}$, $n=0,\dots,N-1$. The scattering problem we consider states 
\begin{equation}\label{PbChampTotalBIS}
\begin{array}{|rl}
\multicolumn{2}{|l}{\mbox{Find }u\in\mrm{H}^1_{\loc}(\Om) \mbox{ such that }u-u_i\mbox{ is outgoing and } }\\[3pt]
\Delta u + k^2(1+\rho) u = 0 & \mbox{ in }\Om \\[3pt]
\partial_{y} u=0 & \mbox{ on }\partial\Om.
\end{array}
\end{equation}
It is known (see e.g. \cite[Chap. 5, \S3.3, Thm. 3.5 p. 160]{NaPl94}) that problem (\ref{PbChampTotalBIS}) always admits a solution. Moreover uniqueness holds if and only if so-called \textit{trapped modes} do not exist. We remind the reader that $u\not\equiv 0$ is said to be a trapped mode if it solves the homogeneous problem (\ref{InitialPb}) and is of finite energy (belongs to $\mL^2(\Om)$). Using again Fourier decomposition, one can check that trapped modes are exponentially decaying at infinity. If there exists a family of linearly independent trapped modes $\psi_1,\dots,\psi_p$ for some $p\ge1$, we shall call $u$ the solution to (\ref{PbChampTotalBIS}) which is orthogonal to $\psi_1,\dots,\psi_p$ for the inner product of $\mrm{L}^2(\Om)$. We shall use this convention all over the document. In the following,  $u-u_i$ (resp. $u$) will be referred to as the \textit{scattered} (resp. \textit{total}) field associated with the \textit{incident} field $u_i$. For $u_i=w^{\pm}_m$, $m=0,\dots,N-1$, we shall denote $u_m^{\pm}$ the corresponding total field. From (\ref{scatteredField}), we know that $u_m^{+}$, $u_m^{-}$ decompose as 
\begin{equation}\label{scatteredFieldParticular}
\begin{array}{ll}
 u^+_m=w^+_m+\dsp\sum_{n=0}^{N-1}R^+_{mn}w^-_n+\tilde{u}_m^{+}\mbox{ for }x\le-\ell,&\qquad u^+_m=\dsp\sum_{n=0}^{N-1}T^+_{mn} w^+_n+\tilde{u}_m^{+}\mbox{ for }x\ge \ell, \\[12pt]
u^-_m=\dsp\sum_{n=0}^{N-1}T^-_{mn} w^-_n+\tilde{u}_m^{-}\mbox{ for }x\le-\ell,&\qquad u^-_m=w^-_m+\dsp\sum_{n=0}^{N-1}R^-_{mn}w^+_n+\tilde{u}_m^{-}\mbox{ for }x\ge \ell.
\end{array}
\end{equation}
The coefficients $R^{\pm}_{mn}\in\Cplx$ (resp. $T^{\pm}_{mn}\in\Cplx$) are called \textit{reflection} (resp. \textit{transmission}) coefficients. They form the \textit{scattering matrix} defined by  
\begin{equation}\label{defScatteringMatrix}
\mathbb{S}=\left(\begin{array}{cc}
\mathcal{R}^+ & \mathcal{T}^+\\
\mathcal{T}^- & \mathcal{R}^-
\end{array}
\right)\in\Cplx^{2N\times2N}\quad\mbox{ with }\quad\begin{array}{|l}
\mathcal{R}^{\pm}=(R^{\pm}_{mn})_{0\le m,n\le N-1}\\[6pt]
\mathcal{T}^{\pm}=(T^{\pm}_{mn})_{0\le m,n\le N-1}.
\end{array}
\end{equation}
Note that conservation of energy allows one to show that any outgoing function solving (\ref{InitialPb}) is exponentially decaying at infinity and therefore is a trapped mode. As a consequence, by linearity, two solutions of (\ref{PbChampTotalBIS}) for the same $u_i$ have the same scattering coefficients. It is known that $\mathbb{S}$ is symmetric ($\mathbb{S}^{\top}=\mathbb{S}$) and unitary ($\mathbb{S}\overline{\mathbb{S}}^{\top}=\mrm{Id}^{2N\times 2N}$). For the sake of clarity, we remind the proof of these two facts in Proposition \ref{SymUnitary} below. Note that from time to time, as above, we omit to write the dependence of the scattering coefficients with respect to $\rho$ when there is no risk of confusion.
	
\subsection{A few examples of problems of practical interest}\label{paragraphProblems}

Now that we have defined the scattering matrix, we describe more precisely the problems that we wish to study in the following. The scattering matrix $\mathbb{S}=\mathbb{S}(\rho)$ is a non-linear function of $\rho$. Generally speaking, we will try to impose prescribed values for certain scattering coefficients by playing with the parameter $\rho$ in (\ref{InitialPb}). Let us gather the real and imaginary parts of the coefficients of interest in some real valued vector $\G(\rho)$. We want to solve problems of the form
\begin{equation} \label{problem0}
\mbox{Find } \ \rho \in \mL^{\infty}(\mathscr{O}) \ \mbox{such that} \ \G(\rho)=\G(\rho_0),
\end{equation}
where $\rho_0\in\mL^{\infty}(\mathscr{O})$ is given. To proceed, we consider the problem 
\begin{equation} \label{problem1}
\mbox{Find } \ \rho \in \mL^{\infty}(\mathscr{O}) \ \mbox{such that} \ F(\rho)=F(\rho_0),
\end{equation}
where $F$ contains some elements of $\G$. As explained in the introduction, all the game consists in choosing carefully $F$ so that solving (\ref{problem1}) gives a solution of (\ref{problem0}). This depends on $\G$ and on $\rho_0$.\\
\newline
We will distinguish below three types of problems, related to different kinds of experiments. If some observer has only access to backscattering measurements, constructing an invisible obstacle amounts to cancel reflection coefficients. It is the \textit{invisibility in reflection}. If measurements are available on both sides of the waveguide, creating an invisible obstacle requires to prescribe values for both reflection and transmission coefficients. It is the \textit{invisibility in reflection and transmission}. Finally, if we do not want the obstacle to be invisible but instead to be indistinguishable from another one, we will speak of \textit{relative invisibility}.\\ 
\newline
\textbf{Invisibility in reflection.\quad}To achieve invisibility in reflection, we need to impose $\mathcal{R}^+(\rho)=0^{N\times N}$. In this case, we shall say that the obstacle is \textit{non reflecting}. In particular, an observer producing right-going waves and measuring the response of the system at $x=-L$ with $L>0$ a bit large will see nothing but a field which is exponentially decaying. Due to noise in measurements, this response will not be distinguishable from  that of the reference waveguide. In (\ref{problem0}), we set $\G(\rho)=\left( \Re e\,R_{mn}^+, \Im m\,R_{mn}^+ \right)_{0\le m,n \le N-1}$ and $\rho_0\equiv0$. Since the scattering matrix is symmetric, one does not need to impose both $R_{mn}^+=0$ and $R_{nm}^+=0$. As a consequence, in (\ref{problem1}) one can take
\begin{equation}\label{NoReflection}
	F(\rho)=\left( \Re e\,R_{mn}^+, \Im m\,R_{mn}^+ \right)_{0\le m \le n \le N-1} \in \R^{N(N+1)}.
\end{equation}
\textbf{Invisibility in reflection and transmission.\quad}One can desire to control the transmission of waves through the waveguide. To begin with, assume for example that we want to have complete transmission in energy only for one given incident mode $w_m^+$. In other words, we want to have no reflection ($R_{mn}^+=0$ for $n=0,\dots,N-1$) and no modal conversion ($T_{mn}^+=0$ for $n=0,\dots,N-1$ with $n\ne m$). Since $\mathbb{S}(\rho)$ is unitary, this is equivalent to impose $|T_{mm}^+|=1$. Therefore, it is tempting to set $F(\rho)= |T_{mm}^+|$. Unfortunately we will see that the continuation technique we use below fails with this choice of $F(\rho)$. Instead it is better to work with $\rho_0\equiv0$ and the a priori much more complicated functional
\begin{equation}\label{formulation_inv_partielle_mode_n}
	F(\rho)=\begin{array}{l}
	\left(  ( \Re e\,R_{mn}^+,\Im m\,R_{mn}^+)_{0\le n\le N-1},( \Re e\,T_{mn}^+,\Im m\,T_{mn}^+)_{0\le n\ne m \le N-1}\right)  \in \R^{4N-2}.
	\end{array}
\end{equation}
In the previous setting, the transmitted field may exhibit a shift of phase with respect to the incident mode. This is due to the fact that we impose $|T_{mm}^+| =1$ and not $T_{mm}^+=1$. In order to impose $T_{mm}^+=1$, since $\mathbb{S}(\rho)$ is unitary, one may take $\rho_0\equiv0$ and solve $F(\rho)= \Re e\,T_{mm}^+$=1. But again, we will see that our technique does not allow one to deal with this choice. Instead, it is better to work with $\rho_0\equiv0$ and 	 
\begin{equation}\label{formulation_inv_mode_n}
	F(\rho)=\left(  ( \Re e\,R_{mn}^+,\Im m\,R_{mn}^+)_{0\le n\le N-1},( \Re e\,T_{mn}^+,\Im m\,T_{mn}^+)_{0\le n\ne m \le N-1},\Im m\,T_{mm}^+	\right)  \in \R^{4N-1}.
\end{equation}
The only difference between  (\ref{formulation_inv_partielle_mode_n}) and  (\ref{formulation_inv_mode_n}) is that in (\ref{formulation_inv_mode_n}) we impose  additionally $ \Im m\,T_{mm}^+ = 0$.\\
\newline
Finally, to impose complete invisibility (complete transmission without phase shift) for all the incident modes $w_m^+$, $m=1,\dots,N$, we must have $\mathcal{R}^+(\rho)=0^{N\times N}$ and $\mathcal{T}^+(\rho)=\mrm{Id}^{N\times N}$. Observe that this is enough to guarantee that $\mathbb{S}(\rho)=\mrm{Id}^{2N\times2N}$ because $\mathbb{S}(\rho)$ is symmetric and unitary. Therefore, in this situation, we also have perfect invisibility for left-going incident waves. Let us see how to define $F$ in this case. First we impose $\mathcal{R}^+(\rho)=0$ working as in (\ref{NoReflection}). Then if we impose to the first line of $\mathcal{T}^+(\rho)$ to be equal to $(1,0,\dots,0)$, since $\mathbb{S}(\rho)$ is unitary, the first column of $\mathcal{T}^+(\rho)$ will be equal to $(1,0,\dots,0)^{\top}$. Iterating the process, we see that it is sufficient to cancel both the terms which are on the triangular upper part of $\mathcal{T}^+(\rho)$ and the imaginary part of the diagonal terms of $\mathcal{T}^+(\rho)$. As a consequence, we shall set $\rho_0\equiv0$ and 	 
\begin{equation}\label{formulation_inv_parfaite}
\left\{\hspace{-0.15cm}\begin{array}{l}
	F(\rho)=\left(  ( \Re e\,R_{mn}^+,\Im m\,R_{mn}^+)_{0\le m\le n\le N-1},( \Re e\,T_{mn}^+,\Im m\,T_{mn}^+)_{0\le m<n  \le N-1},\right.\\[2pt]
\left.\hspace{8.8cm}	(\Im m\,T_{mm}^+)_{0\le m \le N-1}	\right)  \in \R^{N(2N+1)}.
\end{array}\right.	
\end{equation}
\textbf{Relative invisibility.\quad}For a given $\rho_0\in\mL^{\infty}(\mathscr{O})$, one can be interested in finding $\rho\not\equiv\rho_0$ such that $\mathbb{S}(\rho)=\mathbb{S}(\rho_0)$. In other words, one can wish to find two different obstacles having the same scattering matrices. In this case, the choice of the good functional $F$ in (\ref{problem1}) depends on the value of $\mathbb{S}(\rho_0)$. We will discuss this case later.\\
\newline
We just had a glimpse	 of the variety of problems which write under the form (\ref{problem1}). In what follows, we will focus our attention on the problems of non reflecting (\ref{NoReflection}), perfectly invisible (\ref{formulation_inv_parfaite}) and relatively invisible obstacles. Again, we emphasize that for each problem one can imagine several formulations of the form (\ref{problem1}) with different functionals $F$, which are mathematically equivalent, but are not all well-suited for our method. 

\begin{remark}
	In practice, one can be interested in obstacles which act as modal converters: for a given incident field $w_m^+$, one wishes the energy to be completely transmitted on the mode $w_n^+$, $n\ne m$. In other words, we want to find $\rho$ such that $\G(\rho)=0$ with 
\[	
\G(\rho)=\left( ( \Re e\,R_{mp}^+,\Im m\,R_{mp}^+)_{0\le p\le N-1},( \Re e\,T_{mp}^+,\Im m\,T_{mp}^+)_{0\le n  \le N-1,\,p\neq n}\right).
\]
Observe that contrary to the previous examples, for this problem it is not simple to exhibit an initial $\rho_0$ such that $\G(\rho_0)=0$. Therefore our method cannot be used to construct modal converters.
\end{remark}

\subsection{The continuation method}\label{paragraphContinuation}

To solve (\ref{problem1}), we use a continuation method. We construct a sequence $(\rho_n)_{n\ge 1}$ such that for all integer $n$, we have $F(\rho_n)=F(\rho_0)$. Our objective is to obtain parameters $\rho_n$ which are quite different from $\rho_0$ with, for all $n\ge1$,  $F(\rho_n)=F(\rho_0)$. From a geometrical point of view, we move on the manifold $\{\rho\in\mL^{\infty}(\mathscr{O})\,|\,F(\rho)=F(\rho_0)\}$ starting from $\rho_0$. Note that in general this manifold is of infinite dimension.\\
\newline
Now, we explain how to construct from a solution $\rho_n$ of (\ref{problem1}) another $\rho_{n+1}$ such that $F(\rho_{n+1})=F(\rho_0)$. To set ideas, we focus our attention on functions $\rho_0$, $\rho_n$, $\rho_{n+1}$ which are supported in $\overline{\mathscr{O}}$ where $\mathscr{O}$ is a given non empty open subset of $\Om$. And we assume that $F$ is valued in $\R^d$, $d\ge1$. The idea consists in mimicking the proof of the implicit function theorem. We look for $\rho_{n+1}$ as a small perturbation of $\rho_n$. More precisely, we look for $\rho_{n+1}$ of the form $\rho_{n+1}=\rho_n+\eps\mu$, where $\eps>0$ is a small parameter and $\mu$ is a function of $\mL^\infty(\mathscr{O})$ to determine. Assuming that $F$ is continuously differentiable
, a Taylor expansion of $F$ at $\rho_n$ gives 
\begin{equation}\label{Taylor_def}
F(\rho_{n+1}) = F(\rho_n) + \eps \, dF(\rho_n)(\mu)  + \eps^2 \widetilde{F}^\eps(\mu),
\end{equation}
where $dF(\rho_n)(\mu)$ stands for the differential of $F$ at $\rho_n$ in the direction $\mu$ and where $\widetilde{F}^\eps(\mu)$ is an abstract remainder. Introduce the space 
\begin{equation}
\mathcal{N}(\rho_n) = \left\{ \mu  \in\mL^\infty(\mathscr{O}) \mid  dF(\rho_n)(\mu) = 0_{\R^d} \right\}=\ker\,dF(\rho_n).
\end{equation}
Since $dF(\rho_n):\mL^\infty(\mathscr{O})\to\R^d$ is a linear map and since $\mL^\infty(\mathscr{O})$ is of infinite dimension, $\mathcal{N}(\rho_n)$ is also of infinite dimension. In order to have $F(\rho_{n+1})$ close to $F(\rho_n)=F(\rho_0)$, a first idea is to take $\mu\in\mathcal{N}(\rho_n)$. In this case, there holds $|F(\rho_{n+1}) - F(\rho_0)|=O(\eps^2)$. However, in general we do not have $F(\rho_{n+1}) = F(\rho_0)$. In order to cancel the remainder, let us look for $\mu$ of the form $\mu = \mu_0 + \widetilde{\mu}$ with $\mu_0\in\mathcal{N}(\rho_n)\setminus\{0\}$ fixed and $\widetilde{\mu} \in\mL^\infty(\mathscr{O})$ to determine. Inserting this expression in (\ref{Taylor_def}), we get
\begin{equation}\label{presque_pt_fixe}
F(\rho_{n+1}) =F(\rho_n) = F(\rho_0)  \qquad\Leftrightarrow  \qquad dF(\rho_n)(\widetilde{\mu})  = - \eps \widetilde{F}^{\eps}(\mu_0+\widetilde{\mu}).
\end{equation}
Assume now, as for the implicit function theorem, that the differential  $dF(\rho_n):\mL^{\infty}(\mathscr{O})\to\R^d$ is onto. Then for $j=1,\dots,d$, we can find $\mu_j\in\mL^{\infty}(\mathscr{O})$ such that $dF(\rho_n)(\mu_j)=e_j$ where $(e_j)_{j=1}^d$ denotes the canonical basis of $\R^d$. Define the linear map
\[
\begin{array}{rccl}
K :&  \R^d & \to & \mathscr{\K}:=\mrm{span}(\mu_1,\dots,\mu_d)\subset \mL^\infty(\mathscr{O})\\[8pt]
 & \tau=(\tau_1,\dots,\tau_d) & \mapsto  & \dsp\K (\tau)=\sum_{j=1}^{d}\tau_j\mu_j.
\end{array}
\]
Note that $K$ is a right inverse for $dF(\rho_n)$, \textit{i.e.} we have $dF(\rho_n)\circ K = \mrm{Id}_{\R^d}$, and $ \K : \R^d \to \mathscr{\K}$ is a bijection. 
Now let us set 
\begin{equation}\label{chgt_espace} 
\widetilde{\mu} = \K (\tau), 
\end{equation}
where $\tau\in\R^d$ is to be determined. Inserting (\ref{chgt_espace}) in (\ref{presque_pt_fixe}), we get
\begin{equation}\label{pt_fixe} 
 \tau= - \eps \widetilde{F}^{\eps}\left(\mu_0 + \K(\tau) \right).
\end{equation}
This is a fixed point equation with respect to $\tau\in\R^d$. Now, when $F$ is continuously differentiable (that we will have to prove for our $F$), for any given $\radius>0$, for $\eps>0$ small enough, one can show that $\tau\mapsto - \eps \widetilde{F}^{\eps}\left(\mu_0 + \K (\tau) \right)$ is a contraction from $\overline{B(O,\radius)}$ to $\overline{B(O,\radius)}$ where $B(O,\radius)$ denotes the open ball of $\R^d$ centered at $O$ of radius $\radius$. The Banach fixed point theorem guarantees that (\ref{pt_fixe}) admits a unique solution $\tau^{\mrm{sol}}$ in $\overline{B(O,\radius)}$. Then for $\rho_{n+1}=\rho_n+\eps(\mu_0+\K(\tau^{\mrm{sol}}))$, we have $F(\rho_{n+1} ) = F(\rho_{n} ) = F(\rho_0)$. \newline In order to complete the description of the method, we have to check that $\rho_{n+1}\not\equiv\rho_n$. Assume by contradiction that $\rho_{n+1}\equiv\rho_n$. It means that $ \mu_0+\K(\tau^{\mrm{sol}})\equiv 0$. Applying $dF(\rho_n)$ to the latter equation, and using the fact that  $\mu_0\in\mathcal{N}(\rho_n)=\ker\,dF(\rho_n)$, we obtain $dF(\rho_n)\circ \K(\tau^{\mrm{sol}})=\tau^{\mrm{sol}}=0_{\R^d}$ and so $\mu_0\equiv0$, which is false, by hypothesis. \\
\newline
Summing up, we can state the following theorem.
\begin{theorem}	\label{the-ptfixe}
Assume that $F:\mL^{\infty}(\mathscr{O})\to \R^d$ is ${\mathscr C}^1$. Let $\rho\in \mL^{\infty}(\mathscr{O})$ be such that $dF(\rho)$ is onto. Let $\K$ be a right inverse of $dF(\rho)$ and $\mu_0$ be a non-trivial element of $\mathcal{N}(\rho)$. Then for all $\radius>0$, there is $\eps_0>0$ such that for all $\eps\in(0;\varepsilon_0]$
\[
\exists!\tau\in \overline{B(O,\radius)}\mbox{ such that }F(\rho+\varepsilon(\mu_0+\K(\tau)))=F(\rho).
\]
Moreover we have $\mu_0+\K(\tau)\not\equiv0$.
\end{theorem}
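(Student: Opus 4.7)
The plan is to recast the problem as a Banach fixed-point equation on $\overline{B(O,\radius)} \subset \R^d$ and verify the hypotheses of the contraction mapping theorem, exactly as the discussion preceding the theorem statement suggests. First, I would make the remainder $\widetilde{F}^{\eps}$ explicit: applying the fundamental theorem of calculus to $t \mapsto F(\rho + t\eps\mu)$ gives, for any $\mu\in\mL^{\infty}(\mathscr{O})$,
\begin{equation*}
\eps^2\,\widetilde{F}^{\eps}(\mu) \;=\; F(\rho + \eps\mu) - F(\rho) - \eps\,dF(\rho)(\mu) \;=\; \eps \int_0^1 \bigl[dF(\rho + t\eps\mu) - dF(\rho)\bigr](\mu)\,dt,
\end{equation*}
so that $\eps\,\widetilde{F}^{\eps}(\mu)$ equals the integral on the right. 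All subsequent estimates will derive from this identity together with continuity of $dF$ at $\rho$.

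The core step is to show that the map
\begin{equation*}
\Phi_{\eps}(\tau) \,:=\, -\eps\,\widetilde{F}^{\eps}\bigl(\mu_0 + \K(\tau)\bigr)
\end{equation*}
sends $\overline{B(O,\radius)}$ into itself and is a strict contraction for $\eps$ sufficiently small. I would set $M := \|\mu_0\|_{\mL^{\infty}(\mathscr{O})} + \radius\,\|\K\|$, where $\|\K\|$ is the norm of the linear map $\K:\R^d \to \mL^{\infty}(\mathscr{O})$, so that $\|\mu_0 + \K(\tau)\|_{\mL^{\infty}(\mathscr{O})} \le M$ for every $\tau \in \overline{B(O,\radius)}$. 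Given $\eta>0$, continuity of $dF$ at $\rho$ provides $\delta>0$ with $\|dF(\xi) - dF(\rho)\| \le \eta$ whenever $\|\xi - \rho\|_{\mL^{\infty}(\mathscr{O})} \le \delta$. For $\eps$ small enough that $\eps M \le \delta$, the whole integration path stays in that $\delta$-ball, so the integral representation yields
\begin{equation*}
|\Phi_{\eps}(\tau)| \,\le\, \eta\,M, \qquad |\Phi_{\eps}(\tau) - \Phi_{\eps}(\tau')| \,\le\, \eta\,\|\K\|\,|\tau - \tau'|,
\end{equation*}
the second bound being obtained by writing the analogous identity for the difference $\widetilde{F}^{\eps}(\mu_0+\K(\tau)) - \widetilde{F}^{\eps}(\mu_0+\K(\tau'))$ and bounding $\|\K(\tau-\tau')\|_{\mL^{\infty}(\mathscr{O})} \le \|\K\|\,|\tau-\tau'|$. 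Choosing first $\eta$ small enough so that $\eta M \le \radius$ and $\eta\|\K\| < 1$, and then $\eps_0 > 0$ accordingly, gives the two required properties.

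The Banach fixed-point theorem then yields a unique $\tau \in \overline{B(O,\radius)}$ with $\tau = \Phi_{\eps}(\tau)$, which by (\ref{presque_pt_fixe}) is equivalent to $F(\rho+\eps(\mu_0+\K(\tau))) = F(\rho)$. For the non-triviality claim, one repeats the short argument already given above the statement: if $\mu_0 + \K(\tau) \equiv 0$, applying $dF(\rho)$ and using both $\mu_0 \in \ker\,dF(\rho)$ and $dF(\rho)\circ \K = \mrm{Id}_{\R^d}$ yields $\tau = 0_{\R^d}$, and by linearity $\K(0)=0$, so $\mu_0 \equiv 0$, contradicting the hypothesis. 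The only mildly delicate point is ensuring that the estimates remain uniform in $\tau \in \overline{B(O,\radius)}$ and $t \in [0,1]$, but this is automatic because the entire segment $\{\rho + t\eps(\mu_0+\K(\tau))\}$ lies in a single fixed $\mL^{\infty}(\mathscr{O})$-neighbourhood of $\rho$ once $\eps$ is small, so that mere continuity of $dF$ at the single point $\rho$ is enough.
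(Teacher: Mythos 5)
Your proof is correct and follows essentially the same route as the paper: the theorem is just the summing-up of the fixed-point argument sketched in \S\ref{paragraphContinuation}, namely reducing $F(\rho+\eps(\mu_0+\K(\tau)))=F(\rho)$ to $\tau=-\eps\widetilde{F}^{\eps}(\mu_0+\K(\tau))$ and invoking the Banach fixed point theorem, with the same non-triviality argument at the end. In fact you supply more detail than the paper does (the integral representation of the remainder and the explicit invariance and contraction estimates, which the paper leaves as ``one can show''), and your observation that continuity of $dF$ at the single point $\rho$ suffices is accurate, since the segments over which you integrate all stay in a fixed small ball around $\rho$ once $\eps M\le\delta$.
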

\begin{remark}
Observe that the obtained $\rho_{n+1}$ depends on $\eps$ and the $\mu_j$ and that the $\mu_j$ are not uniquely defined ($\mathcal{N}(\rho_n)$ is of infinite dimension). Note also that we could have replaced $\mL^\infty(\mathscr{O})$ by another subspace $\mathscr{E} \subset\mL^\infty(\mathscr{O})$. The only crucial point is that we need that the differential $ dF(\rho_n) :  \mathscr{E} \to \R^d$ to be onto. Choosing an appropriate subspace $\mathscr{E}$ of $\mL^\infty(\mathscr{O})$ will allow us to impose certain constraints to the obstacles we design (see Section \ref{SectionConstraints}).
\end{remark}
\begin{remark}
Let us clarify the connection with the implicit function theorem. Introduce the functional
\begin{equation}
\begin{array}{ccccc}
H  :  \mathcal{N}(\rho_n) \times  \mathscr{\K}  & \to & \R^d \\
\quad\qquad\ ( \mu , \eta ) & \mapsto & F( \rho_n + \mu + \eta) - F(\rho_0)
\end{array}
\end{equation}
which is of class $\mathscr{C}^1$. By definition of $\mathscr{\K} $, we know that $\partial_{\eta}H(0,0): \mathscr{\K}  \to \R^d$ is well-defined and bijective (this is a consequence of the identity $\partial_{\eta}H(0,0)\circ \K = \mrm{Id}_{\R^d}$). On the other hand, we remark that $H(0,0)=0$. The implicit function theorem applies: there are some neighbourhoods $\mathscr{V} \subset \mathcal{N}(\rho_n)$, $\mathscr{W} \subset  \mathscr{\K} $ of $0_{ \mathcal{N}(\rho_n) }$, $0_{ \mathscr{\K} }$ and a unique function $\varphi : \mathscr{V} \to \mathscr{W}$ of class $\mathscr{C}^1$ such that 
\[
[\,(\mu, \eta )\in \mathscr{V} \times \mathscr{W}\ \mbox{ and }\ H(\mu,\eta)=0\,] \qquad\Leftrightarrow\qquad \eta = \varphi (\mu).
\]
In particular, for all $\mu \in \mathcal{N}(\rho_n)$ close enough to $0_{ \mathcal{N}(\rho_n) }$, there is a unique $\eta = \varphi (\mu) \in \mathscr{\K}$ such that $F( \rho_n + \mu + \eta) = F(\rho_0)$.

\end{remark}

\section{Expression and differential of the scattering matrix}\label{SectionS}

The implementation of the continuation method presented in \S\ref{paragraphContinuation} depends on the properties of the scattering matrix and of its differential with respect to the material index. In this section, we compute these quantities. To proceed, we shall work with the symplectic (sesquilinear and anti-hermitian ($q(\varphi,\psi)=-\overline{q(\psi,\varphi)}$)) form $q(\cdot,\cdot)$ such that for all $\varphi,\psi\in\mH^1_{\loc}(\Om)$
\begin{equation}\label{DefFormq}
q(\varphi,\psi)=\dsp\int_{\Sigma_{-\ell}\cup\Sigma_{+\ell}}\cfrac{\partial \varphi}{\partial\nu}\,\overline{\psi}-\varphi\cfrac{\partial \overline{\psi}}{\partial\nu}\,d\sigma.
\end{equation}
Here we set $\Sigma_{\pm \ell}=\{\pm \ell\}\times(0;1)$ and $\partial_{\nu}=\pm\partial_x$ at $x=\pm \ell$. First, we obtain general formulas in the multimode regime. Then, in order to help the reader to get familiar with the different expressions, we write them explicitly in the simple situation where $N=1$ (monomode regime) in \S\ref{paragraphMono}.

\subsection{Expression of the scattering matrix}

\begin{proposition}\label{PropMatSca}
For $\rho\in\mL^{\infty}(\mathscr{O})$, the coefficients of the scattering matrix $\mathbb{S}(\rho)$ defined in (\ref{defScatteringMatrix}) are given by
\begin{equation}\label{DefSca}
R_{mn}^{\pm}=ik^2\dsp\int_{\Om}\rho u^{\pm}_{m}w^{\pm}_{n}\,dz,\qquad T_{mn}^{\pm}=\delta_{m,n}+ik^2\dsp\int_{\Om}\rho u^{\pm}_{m}w^{\mp}_{n}\,dz,\qquad 0 \le m,n\le N-1,
\end{equation}
where the $w^{\pm}_{n}$, $u^{\pm}_m$ are respectively defined in (\ref{defModes}), (\ref{scatteredFieldParticular}).
\end{proposition}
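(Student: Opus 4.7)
The plan is to derive the formulas by applying Green's identity on the truncated box $\Om_\ell = (-\ell,\ell)\times(0,1)$ to the total field $u_m^{\pm}$ tested against a propagating mode $w_n^{\pm}$, and to read off the scattering coefficients from the modal decompositions (\ref{scatteredFieldParticular}) at the cross-sections $\Sigma_{\pm\ell}$. Concretely, for $m\in\{0,\dots,N-1\}$, I would start from
\[
\int_{\Om_\ell}\bigl(\Delta u_m^{+} \cdot w_n^{\pm} - u_m^{+} \cdot \Delta w_n^{\pm}\bigr)\,dz = \int_{\Sigma_{-\ell}\cup\Sigma_{+\ell}} \bigl(\partial_\nu u_m^{+} \cdot w_n^{\pm} - u_m^{+} \cdot \partial_\nu w_n^{\pm}\bigr)\,d\sigma,
\]
the Neumann conditions on $\partial\Om$ killing the lateral contributions. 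Using the PDEs satisfied by $u_m^{+}$ and $w_n^{\pm}$, together with $\supp\rho\subset\mathscr O\subset\Om_\ell$, the volume integral simplifies to $-k^2\int_{\Om}\rho\, u_m^{+}\,w_n^{\pm}\,dz$.

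The core of the argument is the boundary side. The crucial biorthogonality is that, for $x_0\in\{-\ell,+\ell\}$, the cross-section pairing
\[
\int_0^1\bigl(\partial_\nu w_j^{\epsilon_1}\cdot w_n^{\epsilon_2}-w_j^{\epsilon_1}\cdot\partial_\nu w_n^{\epsilon_2}\bigr)(x_0,y)\,dy
\]
carries a factor $\delta_{jn}$ from the $\mrm{L}^2(0,1)$-orthonormality of $(\varphi_n)$; when $j=n$, the prefactor $(\epsilon_1-\epsilon_2)\beta_n$ annihilates the same-sign pairings, and the normalization $(2|\beta_n|)^{-1/2}$ in (\ref{defModes}) is chosen precisely so that each surviving cross pairing equals $\pm i$. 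Moreover, the exponentially decaying remainders $\widetilde{u}_m^{\pm}$ decompose only on evanescent modes of index $j\ge N$, so their pairings with a propagating $w_n^{\pm}$ ($n<N$) are killed by $\delta_{jn}$ and contribute nothing.

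Inserting the expansions (\ref{scatteredFieldParticular}) of $u_m^{+}$ into the boundary integral and collecting: testing against $w_n^{+}$, only the reflection term at $\Sigma_{-\ell}$ survives, producing a boundary value $iR_{mn}^{+}$; matching with $-k^2\int\rho\, u_m^{+}w_n^{+}\,dz$ gives $R_{mn}^{+}=ik^2\int_\Om\rho\,u_m^{+}w_n^{+}\,dz$. Testing against $w_n^{-}$, one picks up $-i\delta_{mn}$ at $\Sigma_{-\ell}$ from the incident $w_m^{+}$ and $+iT_{mn}^{+}$ at $\Sigma_{+\ell}$ from the transmission term, yielding $T_{mn}^{+}=\delta_{mn}+ik^2\int_\Om\rho\,u_m^{+}w_n^{-}\,dz$. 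The formulas for $R_{mn}^{-}$ and $T_{mn}^{-}$ follow by running exactly the same argument with $u_m^{-}$ in place of $u_m^{+}$ and the two cross-sections exchanged.

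The main obstacle is entirely bookkeeping: tracking the four cases arising from the choice of $(\epsilon_1,\epsilon_2)$ and $x_0=\pm\ell$, the sign of $\partial_\nu=\pm\partial_x$ on $\Sigma_{\pm\ell}$, and the direction in which each exponential $e^{\pm i\beta_n x}$ is evaluated. One minor side point worth noting is that the right-hand sides are insensitive to the non-uniqueness caused by trapped modes: any trapped $\psi$ decomposes on evanescent modes at both ends, so the same boundary computation gives $\int_\Om\rho\,\psi\,w_n^{\pm}\,dz=0$ for $n<N$, and the formulas thus do not depend on the $\mrm{L}^2(\Om)$-orthogonality convention used to fix $u_m^{\pm}$.
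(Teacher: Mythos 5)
Your proposal is correct and follows essentially the same route as the paper: Green's identity on $\Om_{\ell}$, the volume term reduced to $-k^2\int_{\Om}\rho\,u_m^{\pm}w_n^{\pm}\,dz$ via the two Helmholtz equations, and the boundary term evaluated through the biorthogonality of the normalized modes (the paper merely packages this as the form $q(\cdot,\cdot)$ together with $\overline{w_n^+}=w_n^-$, which is your bilinear pairing in disguise). The sign bookkeeping you describe ($iR_{mn}^+$ from $\Sigma_{-\ell}$, $-i\delta_{mn}+iT_{mn}^+$ when testing against $w_n^-$) matches the paper's identities (\ref{FirstFourierCalculus}) exactly.
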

\begin{proof}
Start from the expansions (\ref{scatteredFieldParticular}) for $u_m^{\pm}$. Note in particular that decomposition in Fourier series guarantees that the evanescent parts $\tilde{u}_m^{\pm}$ expand only, in the $y$ direction, on the $\varphi_n$ (see (\ref{defCos})) such that $n\ge N$. Then using the particular normalisation of the modes $w_n^{\pm}$ in (\ref{defModes}) and the fact that the family $(\varphi_n)$ is orthonormal for the $\mL^2(0;1)$ inner product, we obtain for $0 \le m,n\le N-1$
\[
\begin{array}{ll}
\dsp\int_{\Sigma_{+\ell}}\cfrac{\partial w_m^{\pm}}{\partial\nu}\,\overline{w_n^{\pm}}-w_m^{\pm}\cfrac{\partial \overline{w_n^{\pm}}}{\partial\nu}\,d\sigma=\pm i\delta_{m,n},&\quad\dsp\int_{\Sigma_{-\ell}}\cfrac{\partial w_m^{\pm}}{\partial\nu}\,\overline{w_n^{\pm}}-w_m^{\pm}\cfrac{\partial \overline{w_n^{\pm}}}{\partial\nu}\,d\sigma=\mp i\delta_{m,n}\\[12pt]
\dsp\int_{\Sigma_{+\ell}}\cfrac{\partial w_m^{\pm}}{\partial\nu}\,\overline{w_n^{\mp}}-w_m^{\pm}\cfrac{\partial \overline{w_n^{\mp}}}{\partial\nu}\,d\sigma=0, &\quad\dsp\int_{\Sigma_{-\ell}}\cfrac{\partial w_m^{\pm}}{\partial\nu}\,\overline{w_n^{\mp}}-w_m^{\pm}\cfrac{\partial \overline{w_n^{\mp}}}{\partial\nu}\,d\sigma=0.
\end{array}
\]
Using these formulas, we get
\begin{equation}\label{FirstFourierCalculus}
\begin{array}{ll}
iR_{mn}^+=q(u_m^+,w_n^-),&\quad  i(T_{mn}^+-\delta_{m,n})=q(u_m^+,w_n^+),\\[5pt] 
  iR_{mn}^-=q(u_m^-,w_n^+),&\quad i(T_{mn}^--\delta_{m,n})=q(u_m^-,w_n^-).
\end{array}  
\end{equation}
Observing that $\overline{w_n^+}=w_n^-$ and integrating by parts in the above identities, we obtain 
\[
\begin{array}{ll}
iR_{mn}^+=\dsp\int_{\Om_{\ell}}\Delta u_m^+\,w_n^+- u_m^+\,\Delta w_n^+\,dz,&\quad  i(T_{mn}^+-\delta_{m,n})=\dsp\int_{\Om_{\ell}}\Delta u_m^+\,w_n^-- u_m^+\,\Delta w_n^-\,dz,\\[12pt] 
  iR_{mn}^-=\dsp\int_{\Om_{\ell}}\Delta u_m^-\,w_n^-- u_m^-\,\Delta w_n^-\,dz,&\quad i(T_{mn}^--\delta_{m,n})=\dsp\int_{\Om_{\ell}}\Delta u_m^-\,w_n^+- u_m^-\,\Delta w_n^+\,dz
\end{array}  
\]
with $\Om_{\ell}:=(-\ell;\ell)\times(0;1)$. This yields (\ref{DefSca}).
\end{proof}
\noindent Below we recall the proof of two classical features of the scattering matrix. 
\begin{proposition}\label{SymUnitary}
For $\rho\in\mL^{\infty}(\mathscr{O})$, the scattering matrix defined in (\ref{defScatteringMatrix}) is symmetric (\,$\mathbb{S}(\rho)^{\top}=\mathbb{S}(\rho)$\,) and unitary (\,$\mathbb{S}(\rho)\overline{\mathbb{S}(\rho)}^{\top}=\mrm{Id}^{2N\times 2N}$\,).
\end{proposition}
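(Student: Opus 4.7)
My plan is to deduce both properties from Green's identity in $\Om_\ell=(-\ell;\ell)\times(0;1)$, applied simultaneously to $q$ (sesquilinear, used for unitarity) and to its bilinear cousin
\[
\tilde q(\varphi,\psi):=\int_{\Sigma_{-\ell}\cup\Sigma_{+\ell}}\cfrac{\partial\varphi}{\partial\nu}\,\psi-\varphi\cfrac{\partial\psi}{\partial\nu}\,d\sigma
\]
(used for symmetry). Since $\rho$ is real-valued and the $u_m^{\pm}$ satisfy $\Delta u+k^2(1+\rho)u=0$ with homogeneous Neumann conditions on $y=0,1$, integration by parts on $\Om_\ell$ yields, for any signs $s_1,s_2\in\{+,-\}$ and any $0\le m,n\le N-1$,
\[
q(u_m^{s_1},u_n^{s_2})=\int_{\Om_\ell}(\Delta u_m^{s_1}\,\overline{u_n^{s_2}}-u_m^{s_1}\,\Delta \overline{u_n^{s_2}})\,dz=0,\qquad \tilde q(u_m^{s_1},u_n^{s_2})=0.
\]

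Next, I substitute the asymptotic decompositions (\ref{scatteredFieldParticular}) of $u_m^{\pm}$ on each component of $\Sigma_{\pm\ell}$ and expand the forms by multilinearity. The evanescent remainders $\tilde u_m^{\pm}$ contribute nothing: the cross terms "propagating$\times$evanescent" vanish by $\mL^2(0;1)$-orthogonality of the family $(\varphi_n)_{n\in\N}$, since propagating indices $n<N$ are disjoint from evanescent ones $n\ge N$; the terms "evanescent$\times$evanescent" vanish by applying Green's identity a second time on the half-strips $\{x>\ell\}$ and $\{x<-\ell\}$, where the equation reduces to $\Delta v+k^2v=0$ and both factors decay exponentially. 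What remains is a finite modal sum. The normalization chosen in (\ref{defModes}) produces the clean elementary blocks
\[
q(w_p^{\pm},w_q^{\pm})=\pm i\,\delta_{p,q},\qquad q(w_p^{\pm},w_q^{\mp})=0,\qquad \tilde q(w_p^{\pm},w_q^{\mp})=\pm i\,\delta_{p,q},\qquad \tilde q(w_p^{\pm},w_q^{\pm})=0
\]
(with the signs adjusted according to whether the contribution comes from $\Sigma_{+\ell}$ or $\Sigma_{-\ell}$; the key point is that these elementary pairings are diagonal in $p,q$).

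Reading off the resulting linear combinations gives the announced identities: from $q(u_m^+,u_n^+)=0$ I obtain the unitarity relation
$\sum_p R_{mp}^+\overline{R_{np}^+}+\sum_p T_{mp}^+\overline{T_{np}^+}=\delta_{m,n}$, and the analogous identity from $q(u_m^-,u_n^-)=0$, while $q(u_m^+,u_n^-)=0$ gives the mixed orthogonality relation. Together these are exactly $\mathbb{S}\overline{\mathbb{S}}^{\top}=\mrm{Id}$. On the other hand, $\tilde q(u_m^+,u_n^+)=0$ yields $R_{mn}^+=R_{nm}^+$, $\tilde q(u_m^-,u_n^-)=0$ yields $R_{mn}^-=R_{nm}^-$, and $\tilde q(u_m^+,u_n^-)=0$ yields $T_{mn}^+=T_{nm}^-$; altogether $\mathbb{S}^{\top}=\mathbb{S}$.

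The only delicate point is justifying rigorously that the evanescent remainders drop out of the computation: the $y$-orthogonality argument is immediate, but the evanescent$\times$evanescent contribution requires invoking Green's identity on the unbounded half-strips, which in turn rests on the exponential decay of $\tilde u_m^{\pm}$ guaranteed by the Fourier expansion on $\varphi_n$ with $n\ge N$. Once this bookkeeping is in place, the rest is a finite linear algebra computation with the elementary pairings listed above.
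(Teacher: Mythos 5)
Your proof is correct and follows essentially the same route as the paper: the bilinear form $\tilde q(\varphi,\psi)$ you introduce for symmetry is exactly the paper's $q(\varphi,\overline{\psi})$, and both arguments combine the vanishing of these forms (via Green's identity on $\Om_{\ell}$) with the elementary modal pairings to read off $\mathbb{S}^{\top}=\mathbb{S}$ and $\mathbb{S}\overline{\mathbb{S}}^{\top}=\mrm{Id}$. Your treatment of the evanescent remainders is slightly more explicit than the paper's ``direct calculus based on Fourier decomposition,'' and you obtain the off-diagonal unitarity relation directly rather than by invoking symmetry, but these are cosmetic differences.
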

\begin{proof}
For $0\le m,n\le N-1$, consider the two functions $u_m^+$, $u_n^+$ defined in (\ref{scatteredFieldParticular}). Integrating by parts in the definition (\ref{DefFormq}) of $q(\cdot,\cdot)$ and using that $u_m^+$, $u_n^+$ solve the same problem (\ref{InitialPb}), one finds $q(u_m^+,\overline{u_n^+})=0$. On the other hand, a direct calculus (based on Fourier decomposition) similar to (\ref{FirstFourierCalculus}) gives $q(u_m^+,\overline{u_n^+})=i(R^{+}_{mn} -R^{+}_{nm})$. We deduce that $R^{+}_{mn}=R^{+}_{nm}$ and that $\mathcal{R}^+(\rho)$ is symmetric. Working in the same way on the quantities $q(u_m^-,\overline{u_n^-})$ and $q(u_m^{\pm},\overline{u_n^{\mp}})$, one can conclude that the whole matrix $\mathbb{S}(\rho)$ is symmetric.\\
\newline 
Now let us show that $\mathbb{S}(\rho)$ is unitary. Again, integrating by parts in the definition of $q(\cdot,\cdot)$, one gets $q(u_m^+,u_n^+)=0$. This times, a direct calculus based on Fourier decomposition gives 
\[
q(u_m^+,u_n^+)=i(-\delta_{m,n}+\sum_{j=0}^{N-1}R^{+}_{mj} \overline{R^{+}_{nj}}+T^{+}_{mj} \overline{T^{+}_{nj}}).
\]
Working similarly with $q(u_m^-,u_n^-)$, $q(u_m^{\pm},u_n^{\mp})$ and using that $\mathbb{S}(\rho)$ is symmetric, one deduces that $\mathbb{S}(\rho)\overline{\mathbb{S}(\rho)}^{\top}=\mrm{Id}^{2N\times 2N}$.
\end{proof}

\subsection{Differential of the scattering matrix}\label{ParagraphDiff}
\begin{proposition}\label{PropoDiffScaMat}
Assume that trapped modes do not exist for the problem (\ref{InitialPb}) at the considered $k$. Then the map $\mathbb{S}:\mL^{\infty}(\mathscr{O})\to \R^{2N\times 2N}$ is ${\mathscr C}^1$ in a neighbourhood of $\rho\in\mL^{\infty}(\mathscr{O})$ and the differential of $\mathbb{S}(\rho)$ in the direction $\mu\in\mL^{\infty}(\mathscr{O})$ is given by
\begin{equation}\label{ExpressionDifferentialsMulti}
 d\mathbb{S}(\rho)(\mu)=\left(\begin{array}{cc}
d\mathcal{R}^+(\rho)(\mu) & d\mathcal{T}^+(\rho)(\mu)\\[10pt]
d\mathcal{T}^-(\rho)(\mu) & d\mathcal{R}^-(\rho)(\mu)\\
\end{array}\right)\mbox{ with }\ \begin{array}{|l}
d\mathcal{R}^{\pm}(\rho)(\mu)=(ik^2\dsp\int_{\Om}\mu u^{\pm}_{m}u^{\pm}_{n}\,dz)_{0\le m,n\le N-1}\\[12pt]
d\mathcal{T}^{\pm}(\rho)(\mu)=(ik^2\dsp\int_{\Om}\mu u^{\pm}_{m}u^{\mp}_{n}\,dz)_{0\le m,n\le N-1},\end{array}\hspace{-0.2cm}
\end{equation}
where the $u^{\pm}_m$ are defined in (\ref{scatteredFieldParticular}). 
\end{proposition}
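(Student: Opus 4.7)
The plan is to prove the proposition in two stages. First, I will establish that $\rho \mapsto u_m^{\pm}(\rho)$ is $C^1$ in a neighborhood of the given $\rho$, which combined with the formulas of Proposition \ref{PropMatSca} yields the $C^1$ regularity of $\mathbb{S}$. Second, I will compute the differential via two applications of Green's identity, the key algebraic input being the vanishing of the boundary integral of two outgoing fields on the artificial cross-sections.

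For the regularity step, I would recast the scattering problem (\ref{PbChampTotalBIS}) on the bounded truncation $\Om_L:=(-L,L)\times(0,1)$ with $L>\ell$, by replacing the radiation conditions by Dirichlet-to-Neumann operators on the artificial cross-sections $\Sigma_{\pm L}$. This yields a variational equation $\mathcal{A}_\rho u_m^{\pm}=f_m^{\pm}$ in $\mH^1(\Om_L)$, where $\mathcal{A}_\rho$ depends affinely (hence $C^\infty$) on $\rho\in\mL^\infty(\mathscr{O})$ and is Fredholm of index zero. The no-trapped-mode hypothesis gives injectivity and thus invertibility of $\mathcal{A}_\rho$. Since invertibility is an $\mL^\infty$-open condition and inversion is smooth, $\rho'\mapsto u_m^{\pm}(\rho')$ is $C^\infty$ into $\mH^1(\Om_L)$ on a neighborhood of $\rho$; modal continuation outside $\Om_L$ extends this to $\mH^1_{\loc}(\Om)$, and plugging into the integral formulas of Proposition \ref{PropMatSca} gives the $C^1$ (indeed $C^\infty$) regularity of $\mathbb{S}$.

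For the differential of $R_{mn}^+$, set $v:=du_m^+(\rho)(\mu)$. Differentiating the PDE $\Delta u_m^++k^2(1+\rho)u_m^+=0$ shows that $v$ is the unique outgoing solution (no incident contribution) of $\Delta v+k^2(1+\rho)v=-k^2\mu u_m^+$ with homogeneous Neumann conditions. Differentiating $R_{mn}^+(\rho)=ik^2\int_\Om \rho u_m^+ w_n^+\,dz$ gives
\[
dR_{mn}^+(\rho)(\mu)=ik^2\int_\Om \mu u_m^+ w_n^+\,dz+ik^2\int_\Om \rho v w_n^+\,dz,
\]
so the announced formula will follow from the identity $\int_\Om \rho v w_n^+\,dz=\int_\Om \mu u_m^+ u_n^+\,dz-\int_\Om \mu u_m^+ w_n^+\,dz$.

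To prove this identity, I apply Green's identity on $\Om_L$ (with $L>\ell$, so that the supports of $\rho$ and $\mu$ lie in the interior) in two ways. Using the PDEs for $v$ and $u_n^+$ the $k^2(1+\rho)$ contributions cancel and one obtains
\[
\int_{\partial\Om_L}(u_n^+\partial_\nu v-v\,\partial_\nu u_n^+)\,d\sigma=-k^2\int_\Om \mu u_m^+ u_n^+\,dz.
\]
Decomposing $u_n^+=w_n^++u_n^{+,\mrm{sc}}$ with $u_n^{+,\mrm{sc}}$ outgoing, the $u_n^{+,\mrm{sc}}$-part of the boundary integral vanishes: expanding both $v$ and $u_n^{+,\mrm{sc}}$ in the outgoing modal series on $\Sigma_{\pm L}$, orthonormality of the $\varphi_j$ cancels off-diagonal terms while each diagonal one carries a factor $\beta_j-\beta_j=0$. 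On the other hand, using the PDEs for $v$ and $w_n^+$ directly in Green's identity yields
\[
\int_{\partial\Om_L}(w_n^+\partial_\nu v-v\,\partial_\nu w_n^+)\,d\sigma=-k^2\int_\Om \rho v w_n^+\,dz-k^2\int_\Om \mu u_m^+ w_n^+\,dz.
\]
Equating the two expressions for the $(w_n^+,v)$ boundary integral gives the required identity, and substituting back produces $dR_{mn}^+(\rho)(\mu)=ik^2\int_\Om \mu u_m^+ u_n^+\,dz$. The blocks $d\mathcal{T}^+,d\mathcal{R}^-,d\mathcal{T}^-$ are treated identically, pairing $v=du_m^{\pm}(\rho)(\mu)$ with the total field $u_n^{\pm}$ or $u_n^{\mp}$ matching the $w_n^{\pm}$ or $w_n^{\mp}$ appearing in the corresponding integrand of Proposition \ref{PropMatSca}. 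The main technical obstacle is the $C^1$ dependence, which rests on a careful Fredholm/DtN framework; once that is in place, the derivation of the explicit formula is a routine manipulation whose essential ingredient is the cancellation of the outgoing-outgoing boundary integral on $\Sigma_{\pm L}$.
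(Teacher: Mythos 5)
Your proposal is correct: the corrector $v=du_m^+(\rho)(\mu)$ is exactly the paper's $u^1_m$, the cancellation of the pairing of two outgoing fields on $\Sigma_{\pm L}$ is exactly the mechanism the paper invokes when it replaces $q(u^1_m,w^-_n)$ by $q(u^1_m,\overline{u^+_n})$, and the Dirichlet-to-Neumann truncation plus the no-trapped-mode hypothesis is exactly how the paper obtains invertibility. What differs is the packaging, in two respects. First, for the formula itself the paper reads $i\,dR^+_{mn}(\rho)(\mu)=q(u^1_m,w^-_n)$ directly off the modal expansion of the corrector and then integrates by parts once, whereas you differentiate the volume representation $R^+_{mn}=ik^2\int_{\Om}\rho\,u_m^+w_n^+\,dz$ of Proposition \ref{PropMatSca} by the product rule and use two Green identities to trade $\int_{\Om}\rho\,v\,w_n^+\,dz$ for $\int_{\Om}\mu\,u_m^+u_n^+\,dz$; both computations are valid and rest on the same vanishing of the outgoing--outgoing boundary bracket. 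Second, for the regularity the paper proves differentiability by an explicit first-order ansatz with an $O(\eps^2)$ error estimate and then separately checks continuity of $\rho\mapsto d\mathbb{S}(\rho)$, whereas you get $\mathscr{C}^\infty$ dependence of $\rho\mapsto u_m^{\pm}(\rho)$ in one stroke from the affine dependence of the truncated operator on $\rho$ and the smoothness of operator inversion on the open set of invertible operators; this is cleaner and slightly stronger (it yields $\mathbb{S}\in\mathscr{C}^\infty$, not merely $\mathscr{C}^1$), at the price of having to justify the product rule for the composite map, which your first step does supply. One small point worth making explicit in a final write-up: the outgoing character of $v$ follows because the incident part $w_m^+$ of $u_m^+$ is independent of $\rho$, so differentiating the family of scattered fields preserves the outgoing modal form; with that said, your argument is complete.
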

\begin{proof}
First we focus our attention on the computation of the coefficients of $d\mathcal{R}^+(\rho)(\mu)$ and $d\mathcal{T}^+(\rho)(\mu)$. To proceed, we consider the problem 
\begin{equation}\label{PerturbedEquation}
\begin{array}{|rlcl}
\multicolumn{4}{|l}{\mbox{Find }u^{+\eps}_m\mbox{ such that }u^{+\eps}_m-w^+_m\mbox{ is outgoing and}}\\[3pt]
\Delta u^{+\eps}_m+k^2(1+\rho+\eps\mu) u^{+\eps}_m&=&0&\mbox{ in }\Om\\[3pt]
\partial_y u^{+\eps}_m&=&0&\mbox{ on }\partial\Om.
\end{array}
\end{equation}
We make the ansatz $u^{+\eps}_m=u^0_m+\eps u^1_m+\dots$ where the dots stand for higher-order terms unimportant in our analysis. Plugging this expansion in (\ref{PerturbedEquation}) and identifying the powers in $\eps$ as $\eps\to0$, we find that $u^{0}_m$ and $u^{1}_m$ are respectively solutions to the problems 
\begin{equation}\label{LimitProblems}
\begin{array}{|rlcl}
\multicolumn{4}{|l}{\mbox{Find }u^{0}_m\mbox{ such that }u^{0}_m-w^+_m\mbox{ is outgoing and }}\\[3pt]
\Delta u^{0}_m+k^2(1+\rho) u^{0}_m&=&0&\mbox{ in }\Om\\[3pt]
\partial_y u^{0}_m&=&0&\mbox{ on }\partial\Om
\end{array}\,\begin{array}{|rlcl}
\multicolumn{4}{|l}{\mbox{Find }u^{1}_m\mbox{ such that }u^{1}_m\mbox{ is outgoing and }}\\[3pt]
\Delta u^{1}_m+k^2(1+\rho) u^{1}_m&=&-k^2\mu u^{0}_m&\mbox{ in }\Om\\[3pt]
\partial_y u^{1}_m&=&0&\mbox{ on }\partial\Om.
\end{array}\hspace{-0.4cm}
\end{equation}
From (\ref{LimitProblems}), first we deduce that we must set $u^0_m=u^+_m$ where $u^+_m$ is defined in (\ref{scatteredFieldParticular}). If we denote $R^{+\eps}_{mn}$, $T^{+\eps}_{mn}$ the scattering coefficients of $u^{+\eps}_m$, from the expansion $u^{+\eps}_m=u^0_m+\eps u^1_m+\dots$ together with the formula (\ref{FirstFourierCalculus}), we get $R^{+\eps}_{mn}=R^{+}_{mn}+\eps dR^+_{mn}(\rho)(\mu)+\dots$, $T^{+\eps}_{mn}=T^{+}_{mn}+\eps dT^+_{mn}(\rho)(\mu)+\dots$ with 
\begin{equation}\label{UseFormSympl}
i dR^+_{mn}(\rho)(\mu)=q(u^1_m,w^-_n),\qquad i dT^+_{mn}(\rho)(\mu)=q(u^1_m,w^+_n).
\end{equation}
On the other hand, a direct computation shows that 
\[
q(u^1_m,w^-_n)=q(u^1_m,\overline{u^+_n})\quad\mbox{ and }\quad q(u^1_m,w^+_n)=q(u^1_m,\overline{u^-_n}).
\]
This is due to the fact that $u^1_m$ is outgoing while $w^-_n$ and $\overline{u^+_n}$ (resp. $w^+_n$ and $\overline{u^-_n}$) have the same outgoing behaviour. Since $\Delta u^1_m +k^2(1+\rho)u^1_m=-k^2\mu u^+_m$ (see (\ref{LimitProblems})), integrating by parts in $q(u^1_m,\overline{u^\pm_n})$, we find
\begin{equation}\label{CalculdRdT}
i dR^+_{mn}(\rho)(\mu)=-k^2\dsp \int_{\Om}\mu u^+_mu^+_n\,dz\quad\mbox{ and }\quad i dT^+_{mn}(\rho)(\mu)=-k^2\dsp \int_{\Om}\mu u^+_m\,u^-_n\,dz.
\end{equation}
Besides, since $T^+_{mn}=T^-_{nm}$ (because $\mathbb{S}$ is symmetric), we have $dT^-_{mn}(\rho)(\mu)=dT^+_{nm}(\rho)(\mu)$. And working as for $dR^+_{mn}(\rho)(\mu)$, we can compute the expression of $dR^-_{mn}(\rho)(\mu)$. This gives (\ref{ExpressionDifferentialsMulti}).\\
\newline
Now we explain how to justify these formula. First, from (\ref{UseFormSympl}) we see that to prove that $\mathbb{S}:\mL^{\infty}(\mathscr{O})\to \R^{2N\times 2N}$ is differentiable in a neighbourhood of $\rho\in\mL^{\infty}(\mathscr{O})$, it is sufficient to establish the estimate
\begin{equation}\label{ErrorEstimate}
\|u^{+\eps}_m-(u^{+}_m+\eps u^1_m)\|_{\mH^1(\Om_{\ell})} \le C\,\eps^2
\end{equation}
with $C>0$ is independent of $\eps$. Define the classical Dirichlet-to-Neumann operators $\Lambda^{\pm}:\mH^{1/2}(\Sigma_{\pm\ell})\to\mH^{-1/2}(\Sigma_{\pm\ell})$ such that that for $\psi\in\mH^{1/2}(\Sigma_{\pm\ell})$, there  holds
\[
\Lambda^{\pm}(\psi)=\sum_{j=0}^{+\infty}\beta_j(\psi,\varphi_j)_{\mL^2(\Sigma_{\pm\ell})}\varphi_j.
\]
Then with the Riesz representation theorem, introduce the operator $A^{\eps}(\rho):\mH^1(\Om_{\ell})\to\mH^1(\Om_{\ell})$ such that for all $\psi$, $\psi'\in\mH^1(\Om_{\ell})$, 
\[
(A^{\eps}(\rho)\psi,\psi')_{\mH^1(\Om_{\ell})}=\int_{\Om_{\ell}}\nabla\psi\cdot\overline{\nabla\psi'}-k^2(1+\rho+\eps\mu)\psi\overline{\psi'}\,dz-\langle \Lambda^+(\psi),\overline{\psi'} \rangle_{\Sigma_{+\ell}}-\langle \Lambda^-(\psi),\overline{\psi'} \rangle_{\Sigma_{-\ell}}.
\]
Here $(\cdot,\cdot)_{\mH^1(\Om_{\ell})}$ stands for the inner product of $\mH^1(\Om_{\ell})$ and $\langle \cdot,\cdot\rangle_{\Sigma_{\pm\ell}}$ denotes the (linear) duality product $\mH^{-1/2}(\Sigma_{\pm\ell})\times \mH^{1/2}(\Sigma_{\pm\ell})$. Using the assumption that trapped modes do not exist for the problem (\ref{InitialPb}), we infer that $A^{0}(\rho):\mH^1(\Om_{\ell})\to\mH^1(\Om_{\ell})$ is an isomorphism. Observing that $A^{\eps}(\rho)-A^{0}(\rho)$ is small in operator norm for $\eps$ sufficiently small, we deduce that $A^{\eps}(\rho)$ is invertible for $\eps$ sufficiently small. And there exists $\eps_0>0$ such that for all $\eps\in(0;\eps_0]$, we have the stability estimate
\begin{equation}\label{UniformBound}
\|(A^{\eps}(\rho))^{-1}\|\le C,
\end{equation}
where $C$ is independent of $\eps$. Using (\ref{UniformBound}) and observing that $e^{\eps}:=u^{+\eps}_m-(u^{+}_m+\eps u^1_m)$ solves the problem
\[
\begin{array}{|rlcl}
\multicolumn{4}{|l}{\mbox{Find }e^{\eps}\mbox{ such that }e^{\eps}\mbox{ is outgoing and}}\\[3pt]
\Delta e^{\eps}+k^2(1+\rho+\eps\mu) e^{\eps}&=&-\eps^2k^2\mu u^1_m&\mbox{ in }\Om\\[3pt]
\partial_y e^{\eps}&=&0&\mbox{ on }\partial\Om,
\end{array}
\]
we obtain the error estimate (\ref{ErrorEstimate}). Denote 
$u^{0}_m(\tilde{\rho})$, $u^{1}_m(\tilde{\rho})$ the solutions of (\ref{LimitProblems}) with $\rho$ replaced by $\tilde{\rho}$ close to $\rho$ for the norm of $\mL^{\infty}(\mathscr{O})$. Using again results of perturbations of operators, first we establish that for $\tilde{\rho}$ close enough to $\rho$, we have $\|u^{0}_m(\rho)-u^{0}_m(\tilde{\rho})\|_{\mH^1(\Om_{\ell})}\le C\,\|\rho-\tilde{\rho}\|_{\mL^{\infty}(\mathscr{O})}$. We deduce that 
$\|u^{1}_m(\rho)-u^{1}_m(\tilde{\rho})\|_{\mH^1(\Om_{\ell})} \le C\,\|\rho-\tilde{\rho}\|_{\mL^{\infty}(\mathscr{O})}\|\mu\|_{\mL^{\infty}(\mathscr{O})}$. This is enough to conclude that $\rho\mapsto dS(\rho)$ is continuous from $\mL^{\infty}(\mathscr{O})$ to $\mathcal{L}(\mL^{\infty}(\mathscr{O}),\R^{2N\times 2N})$. This ends to show that $\mathbb{S}:\mL^{\infty}(\mathscr{O})\to \R^{2N\times 2N}$ is ${\mathscr C}^1$ in a neighbourhood of $\rho\in\mL^{\infty}(\mathscr{O})$.
\end{proof}
\noindent In the next proposition, we prove that the structure of the scattering matrix translates into a structure for the scattering solutions. 
\begin{proposition}
Set $U:=(u_0^+,\dots,u_{N-1}^+,u_0^-,\dots,u_{N-1}^-)^{\top}$ where the $u_{m}^{\pm}$ are the scattering solutions introduced in (\ref{scatteredFieldParticular}). We have the identity
\begin{equation}\label{ImportantRelation}
\overline{\mathbb{S}(\rho)}U=\overline{U}. 
\end{equation}
\end{proposition}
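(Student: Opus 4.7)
The plan is to verify (\ref{ImportantRelation}) block-row by block-row. Unpacking the matrix-vector product and using the structure (\ref{defScatteringMatrix}), the identity is equivalent to the pair of statements: for each $m\in\{0,\dots,N-1\}$,
\[
\sum_{n=0}^{N-1}\overline{R^+_{mn}}\,u_n^+ + \sum_{n=0}^{N-1}\overline{T^+_{mn}}\,u_n^- = \overline{u_m^+},
\qquad
\sum_{n=0}^{N-1}\overline{T^-_{mn}}\,u_n^+ + \sum_{n=0}^{N-1}\overline{R^-_{mn}}\,u_n^- = \overline{u_m^-}.
\]
I will prove the first identity in detail; the second is handled identically, with the roles of the block rows of $\mathbb{S}$ interchanged.

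Let $v_m$ denote the difference of the two sides of the first identity. Because $\rho$ is real and $k$ is real, complex conjugation preserves solutions of the homogeneous version of (\ref{InitialPb}); in particular $\overline{u_m^+}$ solves the homogeneous problem, and so does $v_m$ by linearity. Next I would compute the modal expansion of $v_m$ in the regions $x\le-\ell$ and $x\ge+\ell$. The key algebraic fact is that for $0\le n\le N-1$ the quantities $\beta_n$ are real and $\varphi_n$ is real-valued, so $\overline{w_n^\pm}=w_n^\mp$. Conjugating the expansion (\ref{scatteredFieldParticular}) of $u_m^+$ at $x\le-\ell$ therefore gives $\overline{u_m^+}=w_m^-+\sum_n\overline{R^+_{mn}}\,w_n^++\overline{\tilde u_m^+}$. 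Combining this with the expansions of $u_n^+$ and $u_n^-$ given by (\ref{scatteredFieldParticular}), the coefficient of $w_p^+$ in $v_m$ is seen to vanish identically, while the coefficient of $w_p^-$ equals
\[
\sum_{n=0}^{N-1}\overline{R^+_{mn}}\,R^+_{np}+\sum_{n=0}^{N-1}\overline{T^+_{mn}}\,T^-_{np}-\delta_{mp}.
\]
Invoking the symmetry $\mathbb{S}=\mathbb{S}^{\top}$ (so $R^+_{np}=R^+_{pn}$, $T^-_{np}=T^+_{pn}$) proved in Proposition \ref{SymUnitary}, this is exactly the $(p,m)$-entry of $\mathbb{S}\overline{\mathbb{S}}^{\top}-\mrm{Id}$ restricted to the upper-left block, hence vanishes by unitarity of $\mathbb{S}$. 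An analogous computation at $x\ge+\ell$, where the only $w_p^-$ contributions come from the incident waves of the $u_n^-$'s and from $\overline{u_m^+}$, yields for the coefficient of $w_p^+$ a sum whose vanishing is equivalent to the off-diagonal block relation $\sum_j \mathbb{S}_{N+p,j}\,\overline{\mathbb{S}_{m,j}}=0$, again supplied by Proposition \ref{SymUnitary}.

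At this point $v_m$ is a solution of the homogeneous problem (\ref{InitialPb}) whose modal expansion on propagating modes vanishes on both sides of the obstacle, so $v_m$ is exponentially decaying at $\pm\infty$ and therefore a trapped mode (in the sense recalled before (\ref{PbChampTotalBIS})). If no trapped modes exist we immediately conclude $v_m=0$. In the general case, since $1+\rho$ is real the space of trapped modes is stable under complex conjugation and admits a real basis $\psi_1,\dots,\psi_p$; each $u_n^\pm$ is by convention $\mrm{L}^2(\Omega)$-orthogonal to every $\psi_j$, and orthogonality transfers to $\overline{u_m^+}$ because the $\psi_j$ are real. Hence $v_m$ is itself orthogonal to the trapped modes, which combined with its being a trapped mode forces $v_m\equiv 0$. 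Applying the same argument to the second block row completes the proof. The main obstacle is purely bookkeeping: one has to organize the modal expansion coefficients so that they match precisely the entries of $\mathbb{S}\overline{\mathbb{S}}^\top=\mrm{Id}$, using the symmetries $R^\pm_{np}=R^\pm_{pn}$ and $T^-_{np}=T^+_{pn}$ at the right moments.
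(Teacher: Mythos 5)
Your proof is correct and follows essentially the same route as the paper's: show that $\overline{\mathbb{S}(\rho)}U-\overline{U}$ is exponentially decaying at $\pm\infty$ (hence a vector of trapped modes) by matching the propagating-mode coefficients against the unitarity and symmetry of $\mathbb{S}$, then conclude it vanishes by orthogonality to trapped modes. You are in fact slightly more careful than the paper on one point, namely justifying that $\overline{u_m^{\pm}}$ inherits the $\mL^2$-orthogonality to the (conjugation-stable) space of trapped modes, a step the paper leaves implicit.
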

\begin{proof}
Looking at the behaviour for $|x|\ge \ell$ and using the fact that $\mathbb{S}(\rho)$ is unitary, one finds that $\overline{\mathbb{S}(\rho)}U-\overline{U}$ is a vector of functions which solve the homogeneous problem (\ref{InitialPb}) and which are exponentially decaying at infinity. In other words, $\overline{\mathbb{S}(\rho)}U-\overline{U}$ is a vector of trapped modes. But since by definition the $u^{\pm}_m$ are orthogonal to trapped modes for the $\mL^2(\Om)$ inner product, we deduce (\ref{ImportantRelation}).
\end{proof}
\noindent Using relation (\ref{ImportantRelation}) in (\ref{ExpressionDifferentialsMulti}), we get the following statement which will be useful to address the problem of relative invisibility (see \S\ref{paragraphRelativeInv}).
\begin{proposition}
Assume that trapped modes do not exist for the problem (\ref{InitialPb}) at the considered $k$. Then for $\rho,\,\mu\in\mL^{\infty}(\mathscr{O})$, we have the identity
\[
\overline{\mathbb{S}(\rho)}\,d\mathbb{S}(\rho)(\mu)=ik^2\left(\begin{array}{cc}
\Big(\dsp\int_{\Om}\mu \overline{u_m^+}\,u_n^+ \,dz\Big)_{0\le m,n\le N-1 } & \Big(\dsp\int_{\Om}\mu \overline{u_m^+}\,u_n^- \,dz\Big)_{0\le m,n\le N-1 }\\[12pt]
\Big(\dsp\int_{\Om}\mu \overline{u_m^-}\,u_n^+ \,dz)_{0\le m,n\le N-1 } & \Big(\dsp\int_{\Om}\mu \overline{u_m^-}\,u_n^- \,dz\Big)_{0\le m,n\le N-1 }
\end{array}\right).
\]
\end{proposition}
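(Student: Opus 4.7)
The plan is to combine the compact formula for $d\mathbb{S}(\rho)(\mu)$ coming from Proposition \ref{PropoDiffScaMat} with the structural identity (\ref{ImportantRelation}); once both are recast in vector/matrix form, the result follows by a one-line manipulation.

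First I would rewrite (\ref{ExpressionDifferentialsMulti}) in a block-free manner. With the column vector $U=(u_0^+,\dots,u_{N-1}^+,u_0^-,\dots,u_{N-1}^-)^{\top}$ introduced before (\ref{ImportantRelation}), every entry of $d\mathbb{S}(\rho)(\mu)$ is of the form $ik^2\int_\Om \mu\, U_a U_b\, dz$ for some indices $a,b\in\{0,\dots,2N-1\}$: indeed, the top-left block $d\mathcal{R}^+$ corresponds to $a,b\in\{0,\dots,N-1\}$, the top-right block $d\mathcal{T}^+$ to $a\in\{0,\dots,N-1\}$ and $b\in\{N,\dots,2N-1\}$, etc. In other words,
\[
d\mathbb{S}(\rho)(\mu)\;=\;ik^2\int_{\Om}\mu\,U\,U^{\top}\,dz,
\]
the integral of the outer product $U U^{\top}$ weighted by $\mu$.

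Next I would multiply on the left by $\overline{\mathbb{S}(\rho)}$ and pull this $z$-independent matrix inside the integral, yielding
\[
\overline{\mathbb{S}(\rho)}\,d\mathbb{S}(\rho)(\mu)\;=\;ik^2\int_{\Om}\mu\,\bigl(\overline{\mathbb{S}(\rho)}\,U\bigr)U^{\top}\,dz.
\]
The punchline is then (\ref{ImportantRelation}), which says precisely $\overline{\mathbb{S}(\rho)}\,U=\overline{U}$. Substituting this gives
\[
\overline{\mathbb{S}(\rho)}\,d\mathbb{S}(\rho)(\mu)\;=\;ik^2\int_{\Om}\mu\,\overline{U}\,U^{\top}\,dz,
\]
whose $(a,b)$-entry is $ik^2\int_{\Om}\mu\,\overline{U_a}\,U_b\,dz$. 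Splitting indices according to the convention $U_m=u_m^+$, $U_{N+m}=u_m^-$ for $0\le m\le N-1$ immediately recovers the four $N\times N$ blocks displayed in the statement.

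There is no real obstacle here: both ingredients, the outer-product expression for $d\mathbb{S}$ and the identity $\overline{\mathbb{S}}\,U=\overline{U}$, are already in hand. The only conceptual step is recognising that the formulas in (\ref{ExpressionDifferentialsMulti}) encode the single rank-one-type structure $U U^{\top}$; once this observation is made, the proposition becomes a direct consequence of the preceding results and requires no further analytic input beyond the standing assumption that trapped modes do not exist (needed to invoke Proposition \ref{PropoDiffScaMat}).
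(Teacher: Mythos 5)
Your proof is correct and is exactly the argument the paper intends: the proposition is stated as an immediate consequence of "using relation (\ref{ImportantRelation}) in (\ref{ExpressionDifferentialsMulti})", i.e.\ writing $d\mathbb{S}(\rho)(\mu)=ik^2\int_{\Om}\mu\,UU^{\top}dz$, pulling the constant matrix $\overline{\mathbb{S}(\rho)}$ inside the integral, and substituting $\overline{\mathbb{S}(\rho)}U=\overline{U}$. Nothing to add.
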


\subsection{Monomode regime}\label{paragraphMono}
When the wavenumber $k$ is such that $0<k<\pi$, only the 
mode $w_0^{\pm}$ can propagate in the waveguide $\Om$ ($N=1$). This monomode regime will play an important role in our analysis later. In particular, we will be able to prove stronger results than in the case $N\ge2$. Moreover, it will allow us to understand more easily why certain choices of functional $F$ in (\ref{problem1}) are not adapted. To simplify, when $N=1$, we shall denote $w^{\pm}$, $u^{\pm}$, $R^{\pm}$, $T^{\pm}$ instead of $w^{\pm}_0$, $u^{\pm}_0$, $R^{\pm}_{00}$, $T^{\pm}_{00}$ respectively. Since the scattering matrix is symmetric, we shall set $T=T^+=T^-$ so that, with the help of Proposition \ref{PropMatSca}, we can write
\[
\mathbb{S}(\rho)=\left(\begin{array}{cc}
R^+ & T\\
T & R^-\\
\end{array}\right)=\left(\begin{array}{cc}
ik^2\dsp\int_{\Om}\rho u^+w^+\,dz & 1+ik^2\dsp \int_{\Om}\rho u^+w^-\,dz\\[10pt]
1+ik^2\dsp\int_{\Om}\rho u^-w^+\,dz & ik^2\dsp \int_{\Om}\rho u^-w^-\,dz\\
\end{array}\right).
\]
The unitarity of $\mathbb{S}(\rho)$ is equivalent to the following three identities
\begin{equation}\label{Unitary1D}
|R^+|^2+|T|^2=1;\qquad |R^-|^2+|T|^2=1;\qquad \overline{R^+}T+\overline{T}R^-=0.
\end{equation}
On the other hand, for the differential of the scattering matrix, we have the formulas
\begin{equation}\label{DiffSca1D}
d\mathbb{S}(\rho)(\mu)=ik^2\hspace{-0.1cm}\left(\begin{array}{cc}
\hspace{-0.15cm}\dsp\int_{\Om}\hspace{-0.15cm}\mu (u^+)^2dz & \hspace{-0.2cm}\dsp\int_{\Om}\hspace{-0.15cm}\mu u^+u^-dz\\[10pt]
\hspace{-0.15cm}\dsp\int_{\Om}\hspace{-0.15cm}\mu u^-u^+dz & \hspace{-0.2cm}\dsp\int_{\Om}\hspace{-0.15cm}\mu (u^-)^2dz\\
\end{array}\hspace{-0.15cm}\right);\quad\overline{\mathbb{S}(\rho)}\,d\mathbb{S}(\rho)(\mu)=ik^2\hspace{-0.1cm}\left(\begin{array}{cc}
\hspace{-0.15cm}\dsp\int_{\Om}\hspace{-0.15cm}\mu |u^+|^2dz & \hspace{-0.2cm}\dsp\int_{\Om}\hspace{-0.15cm}\mu \overline{u^+}u^-dz\\[10pt]
\hspace{-0.15cm}\dsp\int_{\Om}\hspace{-0.15cm}\mu\overline{u^-}u^+dz & \hspace{-0.2cm}\dsp\int_{\Om}\hspace{-0.15cm}\mu |u^-|^2dz\\
\end{array}\hspace{-0.15cm}\right)\hspace{-0.1cm}.\hspace{-0.1cm}
\end{equation}
\begin{remark}\label{RemarkdNRJ}
Note that the second identities of (\ref{DiffSca1D}) imply in particular $\Re e\,(\overline{R^{\pm}(\rho)}dR^{\pm}(\rho)(\mu)+\overline{T(\rho)}dT(\rho)(\mu))=0$. These results can be obtained directly by differentiating the equations of (\ref{Unitary1D}). 
\end{remark}
\noindent Relation (\ref{ImportantRelation}) becomes
\begin{equation}\label{ImportantRelation1D}
\overline{\mathbb{S}(\rho)}U=\overline{U}\quad\Leftrightarrow\quad \left(\begin{array}{cc}
\overline{R^+} & \overline{T}\\
\overline{T} & \overline{R^-}
\end{array}\right)\left(\begin{array}{cc}
u^+\\
u^-
\end{array}\right)=\left(\begin{array}{cc}
\overline{u^+}\\
\overline{u^-}
\end{array}\right)\quad\Leftrightarrow\quad \begin{array}{|l}
\overline{R^+}u^++\overline{T}u^-=\overline{u^+} \\
\overline{T}u^++\overline{R^-}u^-=\overline{u^-}.
\end{array} 
\end{equation}
In the next proposition, we state a result which will be useful to study the ontoness of some functionals $F$ below. 

\begin{proposition}\label{propositionImagMono}
Assume that $\rho$ is such that $T=0$. Then the functions $\Re e\,u^+$ and $\Im m\,u^+$ (respectively $\Re e\,u^-$ and $\Im m\,u^-$) are linearly dependent in $\Om$.
\end{proposition}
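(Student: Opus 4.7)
The plan is to combine the unitarity identities (\ref{Unitary1D}) with the pointwise relation (\ref{ImportantRelation1D}) to show that up to a constant unimodular factor, $u^+$ is a real-valued function, from which the linear dependence of $\Re e\,u^+$ and $\Im m\,u^+$ is immediate. The same argument will apply verbatim to $u^-$.

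First, I would use (\ref{Unitary1D}): the assumption $T=0$ forces $|R^+|=|R^-|=1$, so I may write $R^+=e^{i\theta^+}$ and $R^-=e^{i\theta^-}$ for some real phases $\theta^{\pm}$. Next, I would specialize relation (\ref{ImportantRelation1D}) to $T=0$; the two scalar identities decouple and give
\begin{equation*}
\overline{R^+}\,u^+=\overline{u^+}\qquad\text{and}\qquad \overline{R^-}\,u^-=\overline{u^-}\quad\text{in }\Omega.
\end{equation*}

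The key step is then to unwind the first of these. Multiplying by $e^{i\theta^+/2}$, the identity $e^{-i\theta^+}u^+=\overline{u^+}$ rewrites as $e^{-i\theta^+/2}u^+=\overline{e^{-i\theta^+/2}u^+}$. In other words, the function
\begin{equation*}
v^+:=e^{-i\theta^+/2}\,u^+
\end{equation*}
is real-valued on $\Omega$. Consequently $u^+=e^{i\theta^+/2}v^+$, so
\begin{equation*}
\Re e\,u^+=\cos(\theta^+/2)\,v^+,\qquad \Im m\,u^+=\sin(\theta^+/2)\,v^+,
\end{equation*}
which exhibits $\Re e\,u^+$ and $\Im m\,u^+$ as colinear functions in $\Omega$. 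The same manipulation applied to $\overline{R^-}u^-=\overline{u^-}$ yields the analogous conclusion for $u^-$, completing the proof.

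I do not anticipate a real obstacle here: once one observes that unitarity forces $|R^{\pm}|=1$ when $T=0$, everything follows from the already established pointwise relation (\ref{ImportantRelation1D}), which is precisely the content of Proposition on the structure of scattering solutions. The only mild subtlety is the choice of the square root $e^{i\theta^{\pm}/2}$ of $R^{\pm}$ (any of the two determinations works, merely changing the sign of $v^{\pm}$), but this is irrelevant for the linear dependence statement.
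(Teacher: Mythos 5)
Your proposal is correct and follows essentially the same route as the paper: unitarity forces $|R^{\pm}|=1$ when $T=0$, relation (\ref{ImportantRelation1D}) gives $\overline{R^{\pm}}u^{\pm}=\overline{u^{\pm}}$, and the half-angle factor $e^{-i\theta^{\pm}/2}$ turns $u^{\pm}$ into a real-valued function up to a unimodular constant, which is exactly the paper's argument. The only difference is cosmetic, namely that you spell out the colinearity via $\cos(\theta^{+}/2)$ and $\sin(\theta^{+}/2)$.
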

\begin{proof}
Assume that $T=T(\rho)=0$. Then by conservation of energy (\ref{Unitary1D}), we have $|R^+|=1$ and there is $\theta\in[0;2\pi)$ such that $R^+=e^{i\theta}$. From (\ref{ImportantRelation1D}), we deduce that $e^{-i\theta/2}u^+=\overline{e^{-i\theta/2}u^+}$ and so $\Im m\,(e^{-i\theta/2}u^+)\equiv0$. This shows that the functions $\Re e\,u^+$ and $\Im m\,u^+$ are linearly dependent. The proof is similar for $\Re e\,u^-$ and $\Im m\,u^-$.
\end{proof}

\begin{remark}
Note that if $\rho$ is such that $T(\rho)=0$, then the fields $(t,x,y)\mapsto\Re e\,(u^{\pm}(x,y)e^{-i\om t})$ are stationary vibration modes in $\Om$.
\end{remark}

\section{Justification of the continuation method in monomode regime}\label{SectionMono}
 
We come back to the three problems introduced in \S\ref{paragraphProblems}, namely invisibility in reflection, invisibility in reflection and transmission, and relative invisibility. For each case, for the functional $F$ introduced in \S\ref{paragraphProblems}, we shall study the ontoness of the differential of $F$. We remind the reader that this property, as explained in \S\ref{paragraphContinuation} (see the discussion after (\ref{presque_pt_fixe})), is the corner stone of the continuation method. 
We will work exclusively in monomode regime ($N=1$) when $0<k<\pi$. When $\rho\equiv0$, we simply have $u^{\pm}=w^{\pm}$ in (\ref{DiffSca1D}) so that we get explicit formula for $dF(0)$ (see Remark \ref{RmkExpliDiff} below). Then we can show directly that $dF(0)$ is onto. But when $\rho\not\equiv0$, it is necessary to develop a more abstract analysis. In this work, we give complete proofs in monomode regime. At higher wavenumber when $N\ge2$, results of ontoness of the functionals seem harder to establish and their derivation is still an open problem.

\subsection{A preliminary lemma}
The different results of ontoness of the differential that will be proved below will make use of the following lemma. We remind the reader that $\overline{\mathscr{O}}$ corresponds to the support of the obstacle.
\begin{lemma}	\label{lem-Q(X,Y)}
Let $Q(X,Y)=\alpha X^2+2\beta XY+\gamma Y^2$ be a quadratic form on $\R^2$  
with $\alpha,\beta,\gamma\in\R$ such that $(\alpha,\beta,\gamma)\neq (0,0,0)$.  
Assume that $u$ and $v$ are two real valued solutions of (\ref{InitialPb}). If \begin{equation}\label{QO}
Q(u(x,y),v(x,y))=0\qquad\forall (x,y)\in \mathscr{O},
\end{equation} then the same identity holds for $(x,y)\in \Omega$ and there exists a non trivial linear combination of $u$ and $v$  which vanishes in $\Omega$.
\end{lemma}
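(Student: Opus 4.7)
The plan is to distinguish three cases based on the sign of the discriminant $\delta_Q := \beta^2 - \alpha\gamma$ of $Q$, and in each case reduce the statement to the unique continuation principle for the operator $\Delta + k^2(1+\rho)$ in the connected open set $\Om$, which is classical for $\rho \in \mrm{L}^{\infty}(\Om)$ (for $\mrm{H}^1_{\loc}$ solutions).

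If $\delta_Q < 0$, the form $Q$ is definite (positive or negative), so $Q(u,v)=0$ on $\mathscr{O}$ forces $u \equiv v \equiv 0$ on the non-empty open set $\mathscr{O}$, and unique continuation yields $u \equiv v \equiv 0$ throughout $\Om$; both conclusions of the lemma then hold trivially. If $\delta_Q = 0$, the form is a perfect square $\pm(\lambda X + \mu Y)^2$ with $(\lambda,\mu) \neq (0,0)$, so the real combination $\lambda u + \mu v$ vanishes on $\mathscr{O}$, and unique continuation extends this linear identity to all of $\Om$, which immediately furnishes both the announced non-trivial vanishing linear combination and the pointwise identity $Q(u,v)\equiv 0$ in $\Om$.

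The genuinely interesting case is $\delta_Q > 0$, where $Q$ factorises over $\R$ as a product of two non-proportional linear forms, $Q(X,Y) = L_1(X,Y)\,L_2(X,Y)$ with $L_i(X,Y) = \lambda_i X + \mu_i Y$ and $(\lambda_1,\mu_1)$ not collinear to $(\lambda_2,\mu_2)$. Setting $f_i := L_i(u,v)$, we have $f_1 f_2 \equiv 0$ on $\mathscr{O}$. If $\{f_1 = 0\}$ had empty interior in $\mathscr{O}$, then by continuity of $f_1$ the set $\{f_1 \neq 0\}$ would be open and dense in $\mathscr{O}$; since $f_2 = 0$ on this dense set, continuity of $f_2$ would force $f_2 \equiv 0$ on all of $\mathscr{O}$. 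In either case, one of $f_1, f_2$ vanishes on a non-empty open subset of $\Om$.

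It remains to apply unique continuation to this $f_i$: being a real linear combination of $u$ and $v$, it is an $\mrm{H}^1_{\loc}$ solution of $\Delta f_i + k^2(1+\rho) f_i = 0$ in $\Om$ with the Neumann boundary condition, and it vanishes on an open subset of the connected domain $\Om$; hence $f_i \equiv 0$ in $\Om$. This is precisely the non-trivial linear combination of $u$ and $v$ claimed by the lemma, and consequently $Q(u,v) = f_1 f_2 \equiv 0$ in $\Om$. The only delicate ingredient of the argument is the unique continuation principle for the perturbed Helmholtz operator with $\mrm{L}^{\infty}$ potential; the remainder is elementary linear algebra for $Q$ together with a topological density argument, so I do not anticipate any serious obstacle.
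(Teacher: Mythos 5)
Your proof is correct and follows essentially the same route as the paper: split according to the sign of the discriminant of $Q$, reduce via completion of the square or factorisation into real linear forms, use continuity to extract a factor vanishing on a non-empty open set, and conclude with the unique continuation principle for $\Delta+k^2(1+\rho)$. The only difference is organisational (the paper first treats $\alpha=\gamma=0$ and then assumes $\alpha\neq0$, whereas you sort by $\beta^2-\alpha\gamma$), which changes nothing of substance.
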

\begin{proof}
Suppose first that $\alpha=\gamma=0$. Then, necessarily $\beta\neq 0$ and  $u(x,y)v(x,y)=0$ for   all $(x,y)\in\mathscr{O}$. Since $u$ and $v$ are continuous functions, at least one of them vanishes on an open subset of $\mathscr{O}$, and then everywhere in $\Omega$ by the unique continuation principle (see \S8.3 of \cite{CoKr13} and the references therein).
\newline
Now we suppose without loss of generality that $\alpha\neq 0$. In that case, we can write
\[
Q(X,Y)=\alpha^{-1}\left[\left(\alpha X+\beta Y\right)^2+(\alpha\gamma-\beta^2)Y^2\right].
\]
Then there are two possibilities.\\
\newline
$\bullet$ If $\alpha\gamma-\beta^2\geq 0$, $Q(X,Y)=0$ implies $\alpha X+\beta Y=0$ and $\dsp (\alpha\gamma-\beta^2)Y=0$. From (\ref{QO}), we deduce that $\alpha u+\beta v$ and $\dsp (\alpha\gamma-\beta^2)v=0$ vanish  identically in $\mathscr{O}$, and then everywhere in $\Omega$ by the unique continuation principle. The result follows.\\
\newline
$\bullet$ If $\alpha\gamma-\beta^2< 0$,  
\[
Q(X,Y)=\alpha^{-1}\Big(\alpha X+(\beta+\sqrt{\beta^2-\alpha\gamma}\,)Y\Big)\Big(\alpha X+(\beta-\sqrt{\beta^2-\alpha\gamma}\,)Y\Big).
\]
Then condition (\ref{QO}) implies that $(\alpha u+(\beta+\sqrt{\beta^2-\alpha\gamma}\,)v)(\alpha u+(\beta-\sqrt{\beta^2-\alpha\gamma}\,)v)$ vanishes in $\mathscr{O}$. It is again the product of two functions satisfying (\ref{InitialPb}). As a consequence, they are continuous and at least one of them vanishes on an open subset of $\mathscr{O}$. This implies that it vanishes everywhere in $\Omega$ by the unique continuation principle. And the lemma follows. 
\end{proof}
\subsection{Invisibility in reflection}\label{ParagRNullMono}
In monomode regime, the reflection matrix $\mathcal{R}^+(\rho)$ is nothing but the complex number $R^+(\rho)$. In this paragraph, we wish to find functions $\rho\in\mL^{\infty}(\mathscr{O})$ such that $R^+(\rho)=0$. To proceed, we said in (\ref{NoReflection}) that we can work with $\rho_0\equiv0$ and 
\begin{equation}\label{DefFTNull1D}
F(\rho)=( \Re e\,R^+(\rho), \Im m\,R^+(\rho))\in\R^2.
\end{equation}
\begin{proposition}\label{DifferentialR}
Set $0<k<\pi$ ($N=1$). The map $dF(\rho):\mrm{L}^{\infty}(\mathscr{O})\to\R^2$ with $F$ defined in (\ref{DefFTNull1D}) is onto if and only if $\rho\in\mrm{L}^{\infty}(\mathscr{O})$ is such that $T(\rho)\ne0$.
\end{proposition}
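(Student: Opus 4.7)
\medskip

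\noindent\emph{Proof plan.} The strategy is to dualize the ontoness condition so that it becomes a pointwise quadratic identity for $u^+$ on $\mathscr{O}$, then invoke Lemma \ref{lem-Q(X,Y)}, and finally relate the resulting linear dependence to the value of $T(\rho)$.

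First I would use the first formula of (\ref{DiffSca1D}) with $N=1$, namely $dR^+(\rho)(\mu)=ik^2\int_{\mathscr{O}}\mu(u^+)^2\,dz$, and split it into real and imaginary parts. Writing $u^+=a+ib$ with $a=\Re e\,u^+$, $b=\Im m\,u^+$ real, one finds
\[
\Re e\,dR^+(\rho)(\mu)=-2k^2\int_{\mathscr{O}}\mu\,ab\,dz,\qquad \Im m\,dR^+(\rho)(\mu)=k^2\int_{\mathscr{O}}\mu(a^2-b^2)\,dz.
\]
Since $dF(\rho)$ takes values in $\R^2$, it fails to be onto iff there is $(\lambda_1,\lambda_2)\in\R^2\setminus\{(0,0)\}$ such that $\lambda_1\Re e\,dR^+(\rho)(\mu)+\lambda_2\Im m\,dR^+(\rho)(\mu)=0$ for every $\mu\in\mL^\infty(\mathscr{O})$. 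By the fundamental lemma of the calculus of variations, this is equivalent to
\[
Q(a(x,y),b(x,y))=0 \qquad\forall (x,y)\in\mathscr{O},\quad\mbox{ with }Q(X,Y):=\lambda_2 X^2-2\lambda_1 XY-\lambda_2 Y^2.
\]
Since $(\lambda_1,\lambda_2)\neq(0,0)$, the coefficients of $Q$ are not all zero, so Lemma \ref{lem-Q(X,Y)} applies (here $a,b$ are real solutions of (\ref{InitialPb})) and yields a non-trivial real linear combination of $a$ and $b$ vanishing in the whole $\Om$. In other words, $dF(\rho)$ is not onto iff $\Re e\,u^+$ and $\Im m\,u^+$ are linearly dependent in $\Om$.

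It remains to show that this linear dependence is equivalent to $T(\rho)=0$. One direction is exactly the content of Proposition \ref{propositionImagMono}. For the reverse direction, assume there exist real $(c_1,c_2)\neq (0,0)$ with $c_1 a+c_2 b\equiv 0$ in $\Om$, i.e.\ $\Re e\,\bigl((c_1-ic_2)\,u^+\bigr)\equiv 0$. Using the expansion (\ref{scatteredFieldParticular}) of $u^+$ for $x\ge \ell$ together with $\varphi_0\equiv 1$ and $\beta_0=k$, and denoting $(c_1-ic_2)T=A+iB$, this identity rewrites, for $x\ge \ell$, as
\[
(2k)^{-1/2}\bigl(A\cos(kx)-B\sin(kx)\bigr)+\Re e\bigl((c_1-ic_2)\,\tilde u^+(x,y)\bigr)=0.
\]
Since $\tilde u^+$ is exponentially decaying as $x\to +\infty$, letting $x\to+\infty$ (with $y$ fixed) forces $A=B=0$, hence $(c_1-ic_2)T=0$ and therefore $T=0$. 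Collecting the three steps proves the equivalence stated in Proposition \ref{DifferentialR}.

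The main (and really only) non-routine point is the last one: recognizing that the algebraic identity $Q(a,b)=0$ on $\mathscr{O}$ is exactly of the shape covered by Lemma \ref{lem-Q(X,Y)}, so that the unique continuation principle promotes a pointwise condition on $\mathscr{O}$ into a global linear dependence in $\Om$. Once this is observed, the link with $T(\rho)$ is an easy asymptotic computation based on the radiation decomposition of $u^+$.
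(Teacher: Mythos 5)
Your proof is correct and follows essentially the same route as the paper's: the same formula for $dR^+(\rho)(\mu)$, reduction of ontoness to linear independence of $\Re e\,((u^+)^2)$ and $\Im m\,((u^+)^2)$ (you phrase this dually via an annihilating functional, the paper via Lemma \ref{lemmaGram}), Lemma \ref{lem-Q(X,Y)} to pass to linear dependence of $\Re e\,u^+$ and $\Im m\,u^+$, the expansion at $+\infty$ to exclude this when $T(\rho)\ne0$, and Proposition \ref{propositionImagMono} when $T(\rho)=0$. The only step you leave implicit --- that linear dependence of $a$ and $b$ conversely yields a nontrivial $Q$ with $Q(a,b)\equiv0$ on $\mathscr{O}$, hence non-ontoness --- is a one-line substitution and is treated just as tersely in the paper.
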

\begin{remark}\label{RmkExpliDiff}
Note that for $\rho\equiv0$, we have $u^{\pm}=w^{\pm}$ in (\ref{DiffSca1D}) and so 
\[
dF(0)(\mu)=k\,(-\int_{\Om}\mu\sin(2kx)\,dz,\int_{\Om}\mu\cos(2kx)\,dz)/2.
\]
Then it is clear that $dF(0):\mrm{L}^{\infty}(\mathscr{O})\to\R^2$ is onto.
\end{remark}
\begin{proof}
Formula (\ref{DiffSca1D}) guarantees that for $\mu\in\mrm{L}^{\infty}(\mathscr{O})$, we have
\[
dR^+(\rho)(\mu)=ik^2\dsp\int_{\Om}\mu (u^+)^2\,dz .
\]
From Lemma \ref{lemmaGram} below, we infer that $dF(\rho):\mrm{L}^{\infty}(\mathscr{O})\to\R^2$ is onto if and only if $\{\Re e\,((u^+)^2),\Im m\,((u^+)^2)\}$ is a family of linearly independent functions. Assume that we have 
\begin{equation}\label{Freedom}
\alpha\,\Re e\,((u^+)^2)+\beta\,\Im m\,((u^+)^2)=0\quad \mbox{ in }\mathscr{O}
\end{equation}
for some constants $\alpha$, $\beta\in\R$.\\ 
\newline
$\star$ First, we study the case $T(\rho)\ne0$. Set $a=\Re e\,u^+$ and $b=\Im m\,u^+$. Since $\Re e\,((u^+)^2)=a^2-b^2$ and $\Im m\,((u^+)^2)=2a b$, relation (\ref{Freedom}) implies $\alpha\,(a^{2}-b^2)+\beta\,2ab=0$ in $\mathscr{O}$. From Lemma \ref{lem-Q(X,Y)}, we deduce that if $(\alpha,\beta)\ne(0,0)$, then there are some constants $A$, $B$ with $(A,B)\ne(0,0)$ such that
\[
A\,a+B\,b=0\mbox{ in }\Om.
\]
Then there would exist $\theta\in[0;2\pi)$ such that $e^{i\theta}u^+$ is purely real in $\Om$. Since $u^+$ admits the expansion 
\begin{equation}\label{ExpansionInf}
u^+=T(\rho)\,e^{ikx}+\tilde{u}^+\qquad\mbox{ for }x\ge \ell,
\end{equation}
this is impossible when $T(\rho)\ne0$ (we remind the reader that $\tilde{u}^+$ is exponentially decaying). Thus if $T(\rho)\ne0$, then we must have $\alpha=\beta=0$ in (\ref{Freedom}) which guarantees that $dF(\rho)$ is onto.\\
\newline
$\star$ Now we prove that $dF(\rho):\mL^{\infty}(\mathscr{O})\to\R^2$ is not onto when $T(\rho)=0$. When $T(\rho)=0$, Proposition \ref{propositionImagMono} ensures that the functions $\Re e\,u^+$ and $\Im m\,u^+$ are linearly dependent. As a consequence, there are some constants $\eta$, $\gamma\in\R$ with $(\eta,\gamma)\ne(0,0)$ such that $\eta\,a+\gamma\,b=0$ in $\Om$. This allows one to show that there is a pair $(\alpha,\beta)\ne(0,0)$ such that $\alpha\,(a^{2}-b^2)+\beta\,2ab=0$.
\end{proof}
\begin{remark}
Another way to prove the second item of the proof above is to observe from Remark \ref{RemarkdNRJ} that we have $\Re e\,(\overline{R^{+}(\rho)}dR^{+}(\rho)(\mu))=0$ for all $\mu\in\mrm{L}^{\infty}(\mathscr{O})$ when $T(\rho)=0$. Since $|R^{+}(\rho)|=1$ when $T(\rho)=0$, we deduce that  $dF(\rho):\mrm{L}^{\infty}(\mathscr{O})\to\R^2$ is not onto in this case. This is similar to what is represented on Figures \ref{FigureConservationNRJ}, \ref{FigureConservationNRJUnivseral} if one inverts the roles of $R^+$ and $T$.
\end{remark}
\noindent Before proceeding, we show a technical result needed in the above proof.
\begin{lemma}\label{lemmaGram}
Let $f_1,\dots,f_d$, $d\ge1$, be real valued functions of $\mL^{\infty}(\mathscr{O})$. Then the map 
\begin{equation}\label{mapOnto}
\mu\mapsto (\int_{\mathscr{O}}\mu f_1\,dz,\dots,\int_{\mathscr{O}}\mu f_d\,dz)
\end{equation}
from $\mL^{\infty}(\mathscr{O})$ to $\R^d$ is onto if and only if $\{f_1,\dots,f_d\}$ is a family of linearly independent functions.
\end{lemma}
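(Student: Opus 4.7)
The plan is to prove the equivalence by a simple duality argument, exploiting that the target space $\R^d$ is finite-dimensional: the linear map $L:\mu\mapsto(\int_{\mathscr{O}}\mu f_j\,dz)_{j=1}^d$ fails to be onto if and only if its image is contained in a proper hyperplane, i.e. if and only if there exists $\alpha=(\alpha_1,\dots,\alpha_d)\in\R^d\setminus\{0\}$ such that $\alpha\cdot L(\mu)=0$ for every $\mu\in\mL^{\infty}(\mathscr{O})$.

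For the easy implication, I would assume $\{f_1,\dots,f_d\}$ is linearly dependent, pick a nontrivial relation $\sum_{j=1}^d\alpha_j f_j\equiv 0$, and use linearity of the integral to write $\alpha\cdot L(\mu)=\int_{\mathscr{O}}\mu\sum_j\alpha_j f_j\,dz=0$ for all $\mu$, so that the image of $L$ lies in the hyperplane $\alpha^{\perp}\subsetneq\R^d$ and $L$ cannot be onto. For the converse I would argue by contrapositive: assuming $L$ is not onto, I choose $\alpha\ne 0$ annihilating its image and set $g:=\sum_j\alpha_j f_j\in\mL^{\infty}(\mathscr{O})$. The annihilation condition then reads $\int_{\mathscr{O}}\mu g\,dz=0$ for every $\mu\in\mL^{\infty}(\mathscr{O})$, and plugging in the admissible test function $\mu=\mrm{sign}(g)\in\mL^{\infty}(\mathscr{O})$ yields $\int_{\mathscr{O}}|g|\,dz=0$, hence $g=0$ almost everywhere, which is precisely the desired linear dependence of the $f_j$'s.

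No serious obstacle is expected: the result is essentially a bookkeeping statement about the duality pairing between $\mL^{\infty}(\mathscr{O})$ and $\mL^1(\mathscr{O})$ restricted to the finite-dimensional subspace spanned by $f_1,\dots,f_d$. The only point requiring a minimum of care is that the test function used to conclude $g\equiv 0$ from the vanishing-integral condition belongs to $\mL^{\infty}(\mathscr{O})$, which is automatic for $\mrm{sign}(g)$ since it is bounded by $1$; in particular no density or approximation argument is needed.
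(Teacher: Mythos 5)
Your proof is correct, but it follows a different route from the paper's. The paper exploits the inclusion $\mL^{\infty}(\mathscr{O})\subset\mL^2(\mathscr{O})$ (valid since $\mathscr{O}$ is bounded) to split any $\mu$ as an element of $\mrm{span}(f_1,\dots,f_d)$ plus an element of the kernel of the map, reduces ontoness to injectivity on that $d$-dimensional span, and then identifies the latter with the invertibility of the Gram matrix $(\int_{\mathscr{O}}f_if_j\,dz)_{i,j}$. You instead argue by duality in the target: the map fails to be onto precisely when its image lies in a hyperplane $\alpha^{\perp}$, which translates into $\int_{\mathscr{O}}\mu g\,dz=0$ for all $\mu$ with $g=\sum_j\alpha_jf_j$, and the test function $\mu=\mrm{sign}(g)$ forces $g=0$ a.e. Both arguments are complete and elementary; yours has the minor advantage of only needing $f_j\in\mL^1(\mathscr{O})$ (no $\mL^2$ structure), while the paper's Gram-matrix formulation is the one actually reused in the numerical procedure of Section \ref{SectionNumerics}, where the inverse $\mathbb{H}$ of $\G$ is computed explicitly to build the functions $\mu_j$. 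One small point worth making explicit in your write-up is that ``linearly independent'' is understood in $\mL^{\infty}(\mathscr{O})$, i.e.\ up to sets of measure zero, which is exactly what your conclusion $g=0$ a.e.\ delivers.
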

\begin{proof}
Since $\mL^{\infty}(\mathscr{O})\subset\mL^2(\mathscr{O})$, every function of $\mL^{\infty}(\mathscr{O})$ decomposes as an element of $\mrm{span}(f_1,\dots,f_d)$ plus another element in the kernel of the map (\ref{mapOnto}). Therefore, the map (\ref{mapOnto}) is onto from $\mL^{\infty}(\mathscr{O})$ to $\R^d$ if and only if it is onto from $\mrm{span}(f_1,\dots,f_d)$ to $\R^d$. This is true if and only if it is injective in $\mrm{span}(f_1,\dots,f_d)$, which is equivalent to the fact that the matrix $(\int_{\mathscr{O}}f_if_j\,dz)_{1\le i,j\le d}$ is invertible. And clearly $(\int_{\mathscr{O}}f_if_j\,dz)_{1\le i,j\le d}$ is invertible if and only if $\{f_1,\dots,f_d\}$ is a family of linearly independent functions.
\end{proof}

\noindent From Theorem \ref{the-ptfixe} as well as Propositions  
\ref{PropoDiffScaMat} and \ref{DifferentialR}, we deduce the following statement. Here $\mathscr{\K}=\mrm{span}(\mu_1,\mu_2)$ is a subspace of $\mrm{L}^{\infty}(\mathscr{O})$ of dimension $2$ such that $dF(\rho):\mathscr{\K}\to\R^2$ is a bijection. 
\begin{theorem}\label{NonRef1D}
Set $0<k<\pi$ ($N=1$). Assume that $\rho\in\mrm{L}^{\infty}(\mathscr{O})$ is such that $R^+(\rho)=0$ and that trapped modes do not exist for the  problem (\ref{InitialPb}). Let $\mu_0$ be a non-trivial element of $dF(\rho)$. Then for all $\radius>0$, there is $\eps_0>0$ such that for all $\eps\in(0;\varepsilon_0]$
\[
\exists!\tau=(\tau_1,\tau_2)\in \overline{B(O,\radius)}\subset\R^2\mbox{ such that }R^+(\rho+\varepsilon(\mu_0+\tau_1\mu_1+\tau_2\mu_2))=0.
\]
\end{theorem}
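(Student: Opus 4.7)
The statement is essentially an instantiation of the abstract continuation theorem (Theorem \ref{the-ptfixe}) applied to the functional $F=(\Re e\,R^+,\Im m\,R^+)$ of (\ref{DefFTNull1D}) at the given $\rho$. All the work reduces to verifying the two hypotheses of that theorem, namely the $\mathscr C^1$ regularity of $F$ near $\rho$ and the ontoness of $dF(\rho):\mrm{L}^\infty(\mathscr O)\to\R^2$, and then unwrapping the abstract conclusion.

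For the surjectivity of $dF(\rho)$ I would invoke Proposition \ref{DifferentialR}, which requires $T(\rho)\neq 0$. This follows immediately from the energy identity $|R^+(\rho)|^2+|T(\rho)|^2=1$ of (\ref{Unitary1D}): the hypothesis $R^+(\rho)=0$ forces $|T(\rho)|=1\neq 0$. For the regularity, the absence of trapped modes lets me apply Proposition \ref{PropoDiffScaMat}, so that $\mathbb S$, hence $F$, is of class $\mathscr C^1$ in a neighbourhood of $\rho$ in $\mrm{L}^\infty(\mathscr O)$. The ontoness then furnishes $\mu_1,\mu_2\in\mrm{L}^\infty(\mathscr O)$ with $dF(\rho)(\mu_j)=e_j$ for $j=1,2$, and I set $\mathscr K=\mrm{span}(\mu_1,\mu_2)$, with right inverse $K:\tau=(\tau_1,\tau_2)\mapsto\tau_1\mu_1+\tau_2\mu_2$.

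Applying Theorem \ref{the-ptfixe} with this $K$, the given non-trivial $\mu_0\in\ker\,dF(\rho)$, and an arbitrary $r>0$, I obtain an $\eps_0>0$ such that for every $\eps\in(0;\eps_0]$ there exists a unique $\tau\in\overline{B(O,r)}$ satisfying $F(\rho+\eps(\mu_0+K(\tau)))=F(\rho)$. Since $R^+(\rho)=0$ gives $F(\rho)=(0,0)$, the equality reads exactly $R^+(\rho+\eps(\mu_0+\tau_1\mu_1+\tau_2\mu_2))=0$, which is the claim. There is no genuine obstacle in the argument: the whole proof collapses to two one-line checks — energy conservation to secure $T(\rho)\neq 0$, and the trapped-mode assumption to secure $\mathscr C^1$ smoothness — before the abstract machinery of Theorem \ref{the-ptfixe} closes the statement.
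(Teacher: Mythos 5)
Your proposal is correct and follows exactly the route the paper intends: the theorem is stated as a direct consequence of Theorem \ref{the-ptfixe} combined with Propositions \ref{PropoDiffScaMat} and \ref{DifferentialR}, and the only point requiring a check is that $R^+(\rho)=0$ forces $T(\rho)\neq 0$ via the energy identity $|R^+|^2+|T|^2=1$, which you supply. Nothing is missing.
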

\begin{remark}
Once a $\rho^{n+1}$ such that $R^+(\rho^{n+1})=0$ has been constructed from a $\rho^{n}$ such that $R^+(\rho^{n})=0$, we can iterate the process thanks to the previous theorem. This continuation method allows us to get non reflecting obstacles with large amplitudes as we will see in the numerics of Section \ref{SectionNumerics}.
\end{remark}
\begin{remark}
Note that since $\mathbb{S}(\rho)$ is unitary, we have $R^+(\rho)=0\Leftrightarrow R^-(\rho)=0$.
\end{remark}

\subsection{Invisibility in reflection and transmission}\label{paragraphInvRefTrans}

In this paragraph, we wish to find functions $\rho\in\mL^{\infty}(\mathscr{O})$ such that $\mathbb{S}(\rho)=\mrm{Id}^{2\times2}$. To proceed, we said in (\ref{formulation_inv_parfaite}) that we can take $\rho_0\equiv0$ and 
\begin{equation}\label{DefFTOne1D}
F(\rho)=( \Re e\,R^+(\rho), \Im m\,R^+(\rho),\Im m\,T(\rho))\in\R^3.
\end{equation}

\begin{proposition}\label{DifferentialT}
Set $0<k<\pi$ ($N=1$). The map $dF(\rho):\mrm{L}^{\infty}(\mathscr{O})\to\R^3$ with $F$ defined in (\ref{DefFTOne1D}) is onto if and only if $\rho\in\mrm{L}^{\infty}(\mathscr{O})$ is such that $\Re e\,T(\rho)\ne0$.
\end{proposition}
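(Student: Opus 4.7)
The plan is to mirror the structure of the proof of Proposition \ref{DifferentialR}. Using Lemma \ref{lemmaGram} together with the formulas (\ref{DiffSca1D}), the surjectivity of $dF(\rho):\mL^{\infty}(\mathscr{O})\to\R^3$ reduces to the linear independence in $\mathscr{O}$ of the three real-valued integrands associated with $\Re e\,dR^+$, $\Im m\,dR^+$ and $\Im m\,dT$. Setting $a=\Re e\,u^+$, $b=\Im m\,u^+$, $c=\Re e\,u^-$, $d=\Im m\,u^-$, these integrands are (up to non-zero real factors) $ab$, $a^2-b^2$ and $ac-bd$, so I need to study when a relation
\[
\alpha(a^2-b^2)+2\beta\,ab+\gamma(ac-bd)=0\quad\text{in }\mathscr{O}
\]
for real $\alpha,\beta,\gamma$ can hold non-trivially.

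The ``only if'' part is immediate from Remark \ref{RemarkdNRJ}: when $\Re e\,T(\rho)=0$, energy conservation provides the non-trivial relation $\Re e\,R^+\cdot\Re e\,dR^+(\mu) + \Im m\,R^+\cdot\Im m\,dR^+(\mu) + \Im m\,T\cdot\Im m\,dT(\mu)=0$ for all $\mu$, whose coefficients cannot all vanish since $|R^+|^2+|\Im m\,T|^2=1$ in that case.

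For the ``if'' part, assume $\Re e\,T(\rho)\ne 0$, so in particular $T(\rho)\ne 0$. My idea is to use (\ref{ImportantRelation1D}) to remove the dependence on $u^-$: writing $\overline{R^+}u^++\overline{T}u^-=\overline{u^+}$ in real and imaginary parts gives a $2\times 2$ linear system expressing $(c,d)$ as real linear combinations of $(a,b)$, whose determinant is $|T|^2$ and is therefore invertible under our hypothesis. Substituting the resulting expressions of $c,d$ into $ac-bd$, the whole relation reduces to a single quadratic form in $(a,b)$, say
\[
p\,a^2+q\,ab+r\,b^2=0\quad\text{in }\mathscr{O},
\]
whose coefficients $p,q,r$ depend linearly on $\alpha,\beta,\gamma$ and on the scattering coefficients of $\rho$.

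The main obstacle will be to invoke Lemma \ref{lem-Q(X,Y)} and rule out the alternative producing a non-trivial real linear combination of $a$ and $b$ vanishing throughout $\Omega$: as in the proof of Proposition \ref{DifferentialR}, such a relation would amount to $e^{i\theta}u^+$ being real on $\Omega$ for some $\theta$, which through (\ref{ImportantRelation1D}) would force $\overline{T}\,u^-=(e^{2i\theta}-\overline{R^+})\,u^+$, incompatible with the linear independence of $u^+$ and $u^-$ as long as $T(\rho)\ne 0$ (visible from the expansions (\ref{scatteredFieldParticular})). Hence $p=q=r=0$, and a short computation shows that $p+r=2\gamma\,\Re e\,T/|T|^2$, which combined with $\Re e\,T\ne 0$ forces $\gamma=0$; the two remaining equations then give $\alpha=\beta=0$, completing the proof of linear independence.
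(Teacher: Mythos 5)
Your proof is correct and follows essentially the same route as the paper: for the ``if'' direction you reduce to the linear independence of the integrands via Lemma~\ref{lemmaGram}, eliminate $u^-$ through (\ref{ImportantRelation1D}) (the determinant $|T|^2$ computation is just the paper's substitution $u^-=(\overline{u^+}-\overline{R^+}u^+)/\overline{T}$ written in real coordinates), apply Lemma~\ref{lem-Q(X,Y)}, and the key coefficient $p+r=2\gamma\,\Re e\,T/|T|^2$ is exactly the paper's $2\gamma A$ with $A=\Re e\,(1/\overline{T})\ne0$. For the ``only if'' direction you use the differentiated energy identity of Remark~\ref{RemarkdNRJ} to produce a single non-trivial linear relation covering both $T=0$ and $T\in i\R\setminus\{0\}$ at once, which is precisely the alternative argument the paper itself records in Remarks~\ref{RmkExpliDiff}--\ref{RmkDiffNRJ}, so this is a cosmetic rather than substantive deviation.
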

\begin{proof}
Formula (\ref{DiffSca1D}) guarantees that for $\mu\in\mrm{L}^{\infty}(\mathscr{O})$, we have
\[
dR^+(\rho)(\mu)=ik^2\dsp\int_{\Om}\mu (u^+)^2\,dz\qquad\mbox{ and }\qquad dT(\rho)(\mu)=ik^2\dsp\int_{\Om}\mu u^+ u^-\,dz.
\]
Using Lemma \ref{lemmaGram}, we deduce that $dF(\rho):\mrm{L}^{\infty}(\mathscr{O})\to\R^3$ is onto if and only if the family of functions $\{\Re e\,((u^+)^2),\Im m\,((u^+)^2),\Re e\,(u^+u^-)\}$ is linearly independent.\\
$\star$ If $T(\rho)=0$, then Proposition \ref{DifferentialR} ensures that $dR^+(\rho):\mrm{L}^{\infty}(\mathscr{O})\to\Cplx$ is not onto. In this case, $dF(\rho):\mrm{L}^{\infty}(\mathscr{O})\to\R^3$ cannot be onto.\\
$\star$ Assume now that $\Re e\,T(\rho)=0$ with $T(\rho)\ne0$. Formula (\ref{ImportantRelation1D}) implies the identity $u^-=(\overline{u^+}-\overline{R^+}u^+)/\overline{T}$. Therefore, we have 
\begin{equation}\label{eqnOnto1}
\Re e\,(u^+u^-) = \Re e\,( |u^+|^2/\overline{T}-\overline{R^+}(u^+)^2/\overline{T}).
\end{equation}
As a consequence, we see that when $T\in\R i\setminus\{0\}$, we have $\Re e\,(u^+u^-)\in\mrm{span}(\Re e\,((u^+)^2),\Im m\,((u^+)^2))$ and $\{\Re e\,((u^+)^2),\Im m\,((u^+)^2),\Re e\,(u^+u^-)\}$ is a family of linearly dependent functions.\\
$\star$ Finally we consider the case $\Re e\,T\ne0$. Assume that there are some real constants $\alpha$, $\beta$, $\gamma$ such that 
\begin{equation}\label{eqnOnto2}
\alpha\,\Re e\,((u^+)^2)+\beta\,\Im m\,((u^+)^2)+\gamma\,\Re e\,(u^+u^-)=0\quad \mbox{ in }\mathscr{O}.
\end{equation}
Next, we remove the dependence with respect to $u^-$ using the formulas of (\ref{ImportantRelation1D}) in order to apply Lemma \ref{lem-Q(X,Y)}. More precisely, inserting (\ref{eqnOnto1}) in (\ref{eqnOnto2}) and setting again $a=\Re e\,u^+$, $b=\Im m\,u^+$, we find that there are some real constants $A\ne0$, $B$, $C$ such that
\[
\alpha\,(a^{2}-b^2)+\beta\,2ab+\gamma\,( A\,(a^2+b^2)+B\,(a^{2}-b^2)+C\,2ab)=0\quad \mbox{ in }\mathscr{O}.
\]
This implies $(\alpha+\gamma\,(A+B))\,a^{2}+(\beta+\gamma\,C)\,2ab+(-\alpha+\gamma\,(A-B))\,b^{2}=0$. Working as in the proof of Proposition \ref{DifferentialR}, we obtain $\alpha+\gamma\,(A+B)=0$, $\beta+\gamma\,C=0$ and $-\alpha+\gamma\,(A-B)=0$. Since $A\ne0$, we deduce that $\gamma=0$ and $\alpha=\beta=0$. Thus $\{\Re e\,((u^+)^2),\Im m\,((u^+)^2),\Re e\,(u^+u^-)\}$ is a family of linearly independent functions.
\end{proof}
\begin{remark}\label{RmkDiffNRJ}
Another way to establish the second item of the proof of Proposition \ref{DifferentialT} is to use again the identity $\Re e\,(\overline{R^+(\rho)}dR^+(\rho)(\mu))+\Re e\,(\overline{T(\rho)}dT(\rho)(\mu))=0$ (see Remark \ref{RemarkdNRJ}). As a consequence, when $\Re e\,T(\rho)=0$ with $T(\rho)\ne0$, we deduce that there are some real $A$, $B$ independent of $\mu\in\mL^{\infty}(\mathscr{O})$ such that 
\[
\Im m\,(dT(\rho)(\mu))=A\,\Re e\,(dR^+(\rho)(\mu))+B\,\Im m\,(dR^+(\rho)(\mu)).
\]
This allows us to conclude that the  map $\mu\mapsto (\Re e\,(dR^+(\rho)(\mu)),\Im m\,(dR^+(\rho)(\mu)),\Im m\,(dT(\rho)(\mu)))$ is not onto in $\R^3$. 
\end{remark}

\noindent From Theorem \ref{the-ptfixe} as well as Propositions  
\ref{PropoDiffScaMat} and \ref{DifferentialT}, we deduce the following statement. Here $\mathscr{\K}=\mrm{span}(\mu_1,\mu_2,\mu_3)$ is a subspace of $\mrm{L}^{\infty}(\mathscr{O})$ of dimension $3$ such that $dF(\rho):\mathscr{\K}\to\R^3$ is a bijection. 
\begin{theorem}\label{PerfectTrans1D}
Set $0<k<\pi$ ($N=1$). Assume that $\rho\in\mrm{L}^{\infty}(\mathscr{O})$ is such that $\mathbb{S}(\rho)=\mrm{Id}^{2\times2}$ and that trapped modes do not exist for the problem (\ref{InitialPb}). Let $\mu_0$ be a non-trivial element of $dF(\rho)$. Then for all $\radius>0$, there is $\eps_0>0$ such that for all $\eps\in(0;\varepsilon_0]$
\[
\exists!\tau=(\tau_1,\tau_2,\tau_3)\in \overline{B(O,\radius)}\subset\R^3\mbox{ such that }\mathbb{S}(\rho+\varepsilon(\mu_0+\sum_{j=1}^3\tau_j\mu_j))=\mrm{Id}^{2\times2}.
\]
\end{theorem}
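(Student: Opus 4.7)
The plan is to simply invoke the abstract continuation result, namely Theorem \ref{the-ptfixe}, applied to the functional $F$ defined in (\ref{DefFTOne1D}), with the role of $\rho_0$ in the abstract setup played by the current $\rho$. All the substantial work has already been done in Propositions \ref{PropoDiffScaMat} and \ref{DifferentialT}; the present theorem is essentially a packaging of these results together with a short argument bridging $F(\rho_\eps)=0$ and $\mathbb{S}(\rho_\eps)=\mrm{Id}^{2\times2}$.

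First I would verify the hypotheses of Theorem \ref{the-ptfixe}. Since we assume that trapped modes do not exist for (\ref{InitialPb}) at $\rho$, Proposition \ref{PropoDiffScaMat} gives that $\mathbb{S}:\mL^\infty(\mathscr{O})\to\R^{2\times2}$ is $\mathscr{C}^1$ in a neighbourhood of $\rho$, and therefore so is $F$. The assumption $\mathbb{S}(\rho)=\mrm{Id}^{2\times2}$ yields in particular $T(\rho)=1$, so $\Re e\,T(\rho)=1\neq 0$; by Proposition \ref{DifferentialT} the differential $dF(\rho):\mL^\infty(\mathscr{O})\to\R^3$ is onto. Since $dF(\rho)$ is linear from an infinite dimensional space into $\R^3$, its kernel $\mathcal{N}(\rho)$ is of infinite dimension and contains non-trivial elements $\mu_0$. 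Choosing any right inverse $\K$ of $dF(\rho)$, the conclusion of Theorem \ref{the-ptfixe} produces, for every $\radius>0$, some $\eps_0>0$ and, for each $\eps\in(0;\eps_0]$, a unique $\tau\in\overline{B(O,\radius)}\subset\R^3$ such that $F(\rho_\eps)=F(\rho)$, where $\rho_\eps:=\rho+\eps(\mu_0+\K(\tau))$. Noting that $\mathbb{S}(\rho)=\mrm{Id}^{2\times2}$ implies $F(\rho)=0$, we obtain $F(\rho_\eps)=0$, i.e.\ $R^+(\rho_\eps)=0$ and $\Im m\,T(\rho_\eps)=0$.

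The remaining step is to upgrade the equality $F(\rho_\eps)=0$ to $\mathbb{S}(\rho_\eps)=\mrm{Id}^{2\times2}$. This is where the unitarity constraints (\ref{Unitary1D}) play the key role. Since $R^+(\rho_\eps)=0$, the first identity of (\ref{Unitary1D}) gives $|T(\rho_\eps)|=1$; combined with $\Im m\,T(\rho_\eps)=0$, this forces $T(\rho_\eps)\in\{-1,+1\}$. To select the correct sign, I would shrink $\eps_0$ if necessary and use the continuity of $\rho\mapsto T(\rho)$ (Proposition \ref{PropoDiffScaMat}): for $\eps$ sufficiently small, $T(\rho_\eps)$ stays close to $T(\rho)=1$, so that $T(\rho_\eps)=1$. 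The second and third identities of (\ref{Unitary1D}) then yield $R^-(\rho_\eps)=0$, and we conclude that $\mathbb{S}(\rho_\eps)=\mrm{Id}^{2\times2}$.

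The only real subtlety is the sign selection in the last paragraph, which has to be handled by a continuity argument rather than by the fixed point equation itself; the functional $F$ has been chosen precisely small enough (three real equations for a scattering matrix with three real degrees of freedom on the unit circle around $\mrm{Id}^{2\times2}$) that unitarity absorbs the missing equation. A minor companion verification, already guaranteed by the neighbourhood statement in Proposition \ref{PropoDiffScaMat}, is that the perturbed problem at $\rho_\eps$ is itself free of trapped modes for $\eps$ small, so that the scattering matrix $\mathbb{S}(\rho_\eps)$ is unambiguously defined and the unitarity relations (\ref{Unitary1D}) may be invoked.
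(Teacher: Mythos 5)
Your proposal is correct and follows essentially the same route as the paper: invoke Theorem \ref{the-ptfixe} (whose hypotheses are supplied by Propositions \ref{PropoDiffScaMat} and \ref{DifferentialT}, the latter applicable since $\Re e\,T(\rho)=1\neq0$) to get $F(\rho_\eps)=0$, then use the unitarity relations (\ref{Unitary1D}) together with a continuity argument to rule out $T(\rho_\eps)=-1$ and conclude $\mathbb{S}(\rho_\eps)=\mrm{Id}^{2\times2}$. The paper's proof is exactly this, so nothing further is needed.
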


\begin{proof}
From the analysis of \S\ref{paragraphContinuation} and the result of Proposition \ref{DifferentialT}, we know that for $\eps$ small enough, there is a unique $\tau=(\tau_1,\tau_2,\tau_3)\in \overline{B(O,\radius)}\subset\R^3$ such that
\[
F(\rho+\varepsilon(\mu_0+\sum_{j=1}^3\tau_j\mu_j))=0.
\]
Set $\eta:=\varepsilon(\mu_0+\sum_{j=1}^3\tau_j\mu_j)$. By definition (\ref{DefFTOne1D}) of $F$, then we have $R^+(\rho+\eta)=0$ and $\Im m\,T(\rho+\eta)=0$. 
By conservation of energy (\ref{Unitary1D}), we must have $|R^+|^2+|T|^2=1$. We deduce that either $T(\rho+\eta)=1$ or $T(\rho+\eta)=-1$. However, since $\eta$ is small, $T(\rho+\eta)$ is close to $T(\rho)=1$. We infer that $T(\rho+\eta)=1$. 
\end{proof}
\noindent In the rest of this paragraph, we explain why the choice of the functional $F$ defined in (\ref{DefFTOne1D}) is the most relevant one to impose $T=1$. As already mentioned in \S\ref{paragraphProblems}, a seemingly more economic idea would have been to set $F(\rho)=\Re e\,T$. Indeed if $\rho_0$ is such that $T(\rho_0)=1$ and if $\rho$ is close to $\rho_0$ with $F(\rho)=F(\rho_0)$, then we also have $T(\rho)=1$. The problem with this approach is that $dF(\rho_0):\mL^{\infty}(\mathscr{O})\to\R$ is not onto. And more precisely, we have $dF(\rho_0)(\mu)=0$ for all $\mu\in\mL^{\infty}(\mathscr{O})$ (see the schematic Figure \ref{FigureConservationNRJ}). This is a consequence of the identity $\Re e\,(\overline{R^+(\rho_0)}dR^+(\rho_0)(\mu))+\Re e\,(\overline{T(\rho_0)}dT(\rho_0)(\mu))=0$ (see Remark \ref{RemarkdNRJ}).

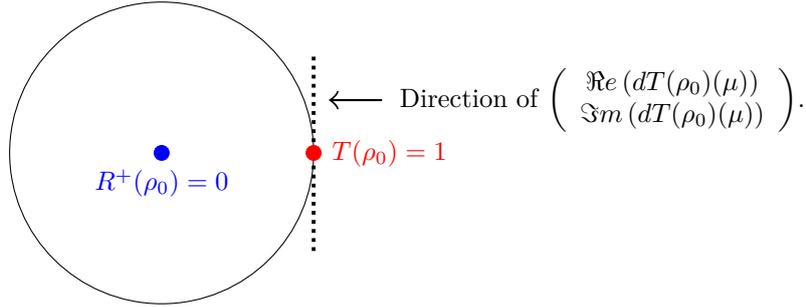
\begin{figure}[!ht]
\centering
\begin{tikzpicture}[scale=1]
\draw (0,0) circle (2);
\draw[fill=blue,draw=none] (0,0) circle (3pt);
\draw[dotted,line width=0.5mm] (2,-1.3)--(2,1.3);
\draw[fill=red,draw=none] (2,0) circle (3pt);
\node at (0,-0.4){\small \textcolor{blue}{$R^+(\rho_0)= 0$}};
\node at (3,0){\small \textcolor{red}{$T(\rho_0)= 1$}};
\node at (5.8,0.7){\small Direction of $\left(\begin{array}{c}\Re e\,(dT(\rho_0)(\mu))\\\Im m\,(dT(\rho_0)(\mu))\end{array}\right)$.};
\scalebox{2}{$\draw[<-] (1.1,0.35)--(1.45,0.35);$}
\end{tikzpicture}
\caption{From the identity of conservation of energy $|R^+|^2+|T|^2=1$, we know that $R^+$ and $T$ must lie in the unit disk of the complex plane. If $\Re e\,(dT(\rho_0)(\mu))$ was not null for some $\mu\in\mL^{\infty}(\Om)$, where $\rho_0$ is such that $T(\rho_0)=1$, then we would have $|T(\rho_0+\eps\mu)|>1$ or $|T(\rho_0-\eps\mu)|>1$ for $\eps>0$ small enough, which is impossible.  \label{FigureConservationNRJ}} 
\end{figure}

\subsection{Relative invisibility}\label{paragraphRelativeInv}

In this paragraph, for a given $\rho_0\in\mL^{\infty}(\mathscr{O})$, we wish to find functions $\rho\not\equiv\rho_0$ such that $\mathbb{S}(\rho)=\mathbb{S}(\rho_0)$. First let us study what can be done with the map $F$ introduced in (\ref{DefFTOne1D}) to impose invisibility in reflection and transmission. We remind the reader that it is defined by $F(\rho_0)=( \Re e\,R^+(\rho_0), \Im m\,R^+(\rho_0),\Im m\,T(\rho_0))$. Proposition \ref{DifferentialT} ensures that $dF(\rho_0):\mrm{L}^{\infty}(\mathscr{O})\to\R^3$ is onto if and only if $\rho_0\in\mrm{L}^{\infty}(\mathscr{O})$ is such that $\Re e\,T(\rho_0)\ne0$.
\begin{proposition}\label{PropoContinuity0}
Let $F$ be as in (\ref{DefFTOne1D}) and $\rho_0\in\mrm{L}^{\infty}(\mathscr{O})$ be such that $T(\rho_0)\ne0$. There exists $\eps>0$ such that for $\|\rho-\rho_0\|_{\mrm{L}^{\infty}(\mathscr{O})} \le \eps$, we have $\mathbb{S}(\rho)=\mathbb{S}(\rho_0)$ if and only if $F(\rho)=F(\rho_0)$. 
\end{proposition}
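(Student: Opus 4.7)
The forward implication is immediate: if $\mathbb{S}(\rho)=\mathbb{S}(\rho_0)$, then in particular $R^+(\rho)=R^+(\rho_0)$ and $T(\rho)=T(\rho_0)$, so every component of $F(\rho)$ agrees with that of $F(\rho_0)$. All the work is in the converse.

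Assume $F(\rho)=F(\rho_0)$, that is $R^+(\rho)=R^+(\rho_0)$ and $\Im m\,T(\rho)=\Im m\,T(\rho_0)$. The plan is to recover $\Re e\,T(\rho)=\Re e\,T(\rho_0)$ from the conservation of energy, and then to recover $R^-$ from the off-diagonal unitarity relation. First, the identity $|R^+|^2+|T|^2=1$ from (\ref{Unitary1D}), applied to both $\rho$ and $\rho_0$, together with $R^+(\rho)=R^+(\rho_0)$, yields $|T(\rho)|=|T(\rho_0)|$. Combined with $\Im m\,T(\rho)=\Im m\,T(\rho_0)$ this gives
\[
(\Re e\,T(\rho))^2=(\Re e\,T(\rho_0))^2,
\]
hence $\Re e\,T(\rho)=\pm\Re e\,T(\rho_0)$. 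If $\Re e\,T(\rho_0)=0$ there is nothing to rule out and the equality $\Re e\,T(\rho)=\Re e\,T(\rho_0)$ holds. If $\Re e\,T(\rho_0)\neq 0$, the continuity of $\rho\mapsto T(\rho)$ given by Proposition \ref{PropoDiffScaMat} ensures that, for $\eps>0$ small enough, $\Re e\,T(\rho)$ stays on the same side of $0$ as $\Re e\,T(\rho_0)$, so the minus sign is excluded. In both cases we conclude $T(\rho)=T(\rho_0)$.

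It remains to show $R^-(\rho)=R^-(\rho_0)$. The third relation in (\ref{Unitary1D}), $\overline{R^+}T+\overline{T}R^-=0$, gives (since $T(\rho_0)\neq 0$ and, by continuity, $T(\rho)\neq 0$ for $\eps$ small enough)
\[
R^-=-\frac{\overline{R^+}\,T}{\overline{T}}.
\]
Plugging in the already established equalities $R^+(\rho)=R^+(\rho_0)$ and $T(\rho)=T(\rho_0)$ shows $R^-(\rho)=R^-(\rho_0)$. As $\mathbb{S}$ is symmetric, we also have $T^-(\rho)=T^+(\rho)=T^+(\rho_0)=T^-(\rho_0)$, so all four scattering coefficients coincide, i.e. $\mathbb{S}(\rho)=\mathbb{S}(\rho_0)$.

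The only non-routine point is the sign ambiguity on $\Re e\,T$, which is resolved by a continuity argument that fixes the constant $\eps$; everything else is an algebraic consequence of the symmetry and unitarity of $\mathbb{S}$. Note that the hypothesis $T(\rho_0)\neq 0$ is used precisely to invert the off-diagonal unitarity identity and recover $R^-$ from $R^+$ and $T$.
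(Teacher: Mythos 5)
Your proof is correct and follows essentially the same route as the paper's: conservation of energy plus $R^+(\rho)=R^+(\rho_0)$ gives $|T(\rho)|=|T(\rho_0)|$, continuity near $\rho_0$ resolves the sign of $\Re e\,T$, and the off-diagonal unitarity relation (invertible since $T(\rho_0)\ne0$) recovers $R^-$. Your treatment is just slightly more explicit than the paper's on the sign ambiguity and the final division by $\overline{T}$.
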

\begin{proof}
Clearly if $\mathbb{S}(\rho)=\mathbb{S}(\rho_0)$ then $F(\rho)=F(\rho_0)$. Now assume that $F(\rho)=F(\rho_0)$. Then we have $R^+(\rho)=R^+(\rho_0)$ and $\Im m\,T(\rho)=\Im m\,T(\rho_0)$. By conservation of energy, we deduce from $R^+(\rho)=R^+(\rho_0)$ that $|T(\rho)|=|T(\rho_0)|$. Since $T(\rho)$ is close to $T(\rho_0)$ when $\rho$ is close to $\rho_0$, these constraints suffice to guarantee that $T(\rho)=T(\rho_0)$. But the unitarity of $\mathbb{S}(\rho)$ and $\mathbb{S}(\rho_0)$ impose $\overline{R^+(\rho)}T(\rho)+\overline{T(\rho)}R^-(\rho)=0$ and $\overline{R^+(\rho_0)}T(\rho_0)+\overline{T(\rho_0)}R^-(\rho_0)=0$ (see (\ref{Unitary1D})). This implies $R^-(\rho)=R^-(\rho_0)$ when $T(\rho_0)\ne0$.
\end{proof}
\noindent Thus we have a method to impose $\mathbb{S}(\rho)=\mathbb{S}(\rho_0)$ when $\rho_0$ is such that $\Re e\,T(\rho_0)\ne0$. Now we wish to consider the case $\Re e\,T(\rho_0)=0$. Observe that the identity $\mathbb{S}(\rho)=\mathbb{S}(\rho_0)$ is equivalent to have $\overline{\mathbb{S}(\rho_0)}\mathbb{S}(\rho)=\mrm{Id}^{2\times 2}$ because $\mathbb{S}(\rho_0)$ is unitary and symmetric. For a general $\rho\in\mL^{\infty}(\mathscr{O})$, set $\mathbb{M}:=\overline{\mathbb{S}(\rho_0)}\,\mathbb{S}(\rho)$. In order to get $\mathbb{M}=\mrm{Id}^{2\times 2}$, we will impose $\Im m\,\mathbb{M}_{11}=0$, $\mathbb{M}_{21}=0$ and use the fact that $\mathbb{M}$ is unitary. To proceed, we define the map $F:\mrm{L}^{\infty}(\mathscr{O})\to\R^3$ such that 
\begin{equation}\label{UniversalFunctional}
\begin{array}{lcl}
F(\rho)&=&(\Im m\,\mathbb{M}_{11},\Re e\,\mathbb{M}_{21},\Im m\,\mathbb{M}_{21})\\
&=&(\Im m\,(\overline{R^+_0}\,R^++\overline{T_0}\,T),\Re e\,(\overline{T_0}\,R^++\overline{R^-_0}\,T),\Im m\,(\overline{T_0}\,R^++\overline{R^-_0}\,T)),
\end{array}
\end{equation}
where $R^{\pm}_0$, $T_0$ stand for the coefficients of $\mathbb{S}(\rho_0)$.

\begin{remark}\label{RmkParticularValues}
$\bullet$ For $(R^+_0,T_0)=(0,1)$ (invisibility in reflection and transmission), we have $F(\rho)=(\Im m\,T,\Re e\,R^+,\Im m\,R^+)$. This is coherent with the choice we did in \S\ref{paragraphInvRefTrans}.\\
$\bullet$ For $(R^+_0,T_0)=(0,i)$, we have $F(\rho)=(-\Re e\,T,\Im m\,R^+,-\Re e\,R^+)$. This choice is coherent with what we get by derivating the relations of conservation of energy (\ref{Unitary1D}) (see the schematic Figure \ref{FigureConservationNRJUnivseral}).
\end{remark}

\begin{figure}[!ht]
\centering
\begin{tikzpicture}[scale=1]
\draw (0,0) circle (2);
\draw[fill=blue,draw=none] (0,0) circle (3pt);
\draw[dotted,line width=0.5mm] (-1.3,2)--(1.3,2);
\draw[fill=red,draw=none] (0,2) circle (3pt);
\node at (0,-0.4){\small \textcolor{blue}{$R^+(\rho_0)= 0$}};
\node at (0,2.4){\small \textcolor{red}{$T(\rho_0)= i$}};
\node at (5.8,0.7){\small Direction of $\left(\begin{array}{c}\Re e\,(dT(\rho_0)(\mu))\\\Im m\,(dT(\rho_0)(\mu))\end{array}\right)$.};
\scalebox{2}{$\draw[<-] (0.85,1)--(1.1,1)--(1.1,0.35)--(1.45,0.35);$}
\end{tikzpicture}
\caption{When $\rho_0$ is such that $T(\rho_0)=i$, if $\Im m\,(dT(\rho_0)(\mu))$ was not null for some $\mu\in\mL^{\infty}(\Om)$, then we would have $|T(\rho_0+\eps\mu)|>1$ or $|T(\rho_0-\eps\mu)|>1$ for $\eps>0$ small enough, which is impossible. Therefore, in this case it is natural to 
work with $F(\rho)=(\Re e\,R^+,\Im m\,R^+,\Re e\,T)$.
 \label{FigureConservationNRJUnivseral}} 
\end{figure}
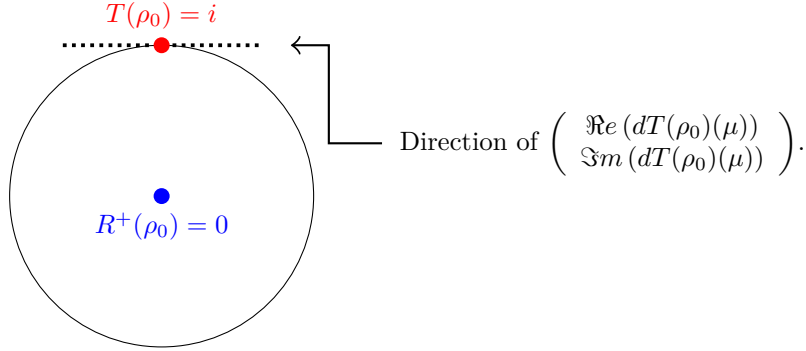

\begin{proposition}\label{PropoContinuity}
Let $F$ be as in (\ref{UniversalFunctional}). There exists $\eps>0$ such that for $\|\rho-\rho_0\|_{\mrm{L}^{\infty}(\mathscr{O})} \le \eps$, we have $\mathbb{S}(\rho)=\mathbb{S}(\rho_0)$ if and only if $F(\rho)=0$. 
\end{proposition}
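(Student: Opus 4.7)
The direction $\mathbb{S}(\rho)=\mathbb{S}(\rho_0)\Rightarrow F(\rho)=0$ is immediate from the definition of $F$ since it forces $\mathbb{M}:=\overline{\mathbb{S}(\rho_0)}\,\mathbb{S}(\rho)=\overline{\mathbb{S}(\rho_0)}\,\mathbb{S}(\rho_0)=\mrm{Id}^{2\times 2}$ (using that $\mathbb{S}(\rho_0)$ is symmetric and unitary). All the work lies in the converse: the goal is to show $\mathbb{M}=\mrm{Id}^{2\times 2}$ assuming $F(\rho)=0$ and $\rho$ close enough to $\rho_0$. The starting observation is that $\mathbb{M}$ is itself a $2\times 2$ unitary matrix. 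Indeed, since $\mathbb{S}_0$ is symmetric unitary, $\overline{\mathbb{S}_0}^{-1}=\mathbb{S}_0$, so
\[
\mathbb{M}\,\mathbb{M}^{*}\;=\;\overline{\mathbb{S}_0}\,\mathbb{S}\,\overline{\mathbb{S}}^{\top}\,\mathbb{S}_0^{\top}\;=\;\overline{\mathbb{S}_0}\,\mathbb{S}_0\;=\;\mrm{Id}^{2\times 2}.
\]

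Next, I would exploit $F(\rho)=0$, which directly gives $\mathbb{M}_{21}=0$. For a $2\times 2$ unitary matrix, writing out $\mathbb{M}\mathbb{M}^{*}=\mrm{Id}^{2\times 2}$ with $\mathbb{M}_{21}=0$ immediately yields $\mathbb{M}_{12}=0$ and $|\mathbb{M}_{11}|=|\mathbb{M}_{22}|=1$, so $\mathbb{M}$ is diagonal with unit-modulus entries. Combined with the remaining component $\Im m\,\mathbb{M}_{11}=0$, this forces $\mathbb{M}_{11}\in\{-1,+1\}$. The ${\mathscr C}^1$ regularity of $\rho\mapsto\mathbb{S}(\rho)$ proved in Proposition \ref{PropoDiffScaMat} guarantees that $\mathbb{M}\to\mrm{Id}^{2\times 2}$ as $\rho\to\rho_0$, so choosing $\eps$ small enough selects $\mathbb{M}_{11}=+1$.

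It remains to pin down $\mathbb{M}_{22}$, and this is where I expect the main difficulty. My approach is to use the symmetry of $\mathbb{S}(\rho)$. Writing $\mathbb{S}=\mathbb{S}_0\,\mathbb{M}$ (from $\overline{\mathbb{S}_0}^{-1}=\mathbb{S}_0$) and taking the transpose, the symmetry of $\mathbb{S}$ and $\mathbb{S}_0$ together with the diagonality of $\mathbb{M}$ produce the commutation relation $\mathbb{S}_0\,\mathbb{M}=\mathbb{M}\,\mathbb{S}_0$. In monomode, with $\mathbb{M}=\mrm{diag}(1,d)$ and $\mathbb{S}_0$ having off-diagonal coefficient $T_0=T(\rho_0)$, this reduces to $(d-1)\,T_0=0$. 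Provided $T_0\ne 0$ (which covers both examples of Remark \ref{RmkParticularValues}), we conclude $d=1$, hence $\mathbb{M}=\mrm{Id}^{2\times 2}$, i.e.\ $\mathbb{S}(\rho)=\mathbb{S}(\rho_0)$. The degenerate case $T_0=0$ is not captured by this argument: the commutation becomes vacuous and $F(\rho)=0$ does not pin $\mathbb{M}_{22}$ on the unit circle; the statement therefore implicitly presupposes $T(\rho_0)\ne 0$, and handling $T_0=0$ would require adding a fourth real component (e.g.\ $\Im m\,\mathbb{M}_{22}$) to $F$.
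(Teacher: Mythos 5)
Your proof is correct and follows essentially the same route as the paper's: unitarity of $\mathbb{M}$ together with $\mathbb{M}_{21}=0$ forces $\mathbb{M}$ to be diagonal with unimodular entries, $\Im m\,\mathbb{M}_{11}=0$ plus closeness of $\rho$ to $\rho_0$ gives $\mathbb{M}_{11}=1$, and the symmetry of $\mathbb{S}(\rho)=\mathbb{S}(\rho_0)\mathbb{M}$ pins down $\mathbb{M}_{22}$. Your caveat about $T_0=0$ is well observed and not a defect of your argument: the paper's own final step (``using that $\mathbb{S}(\rho)$ is symmetric, we obtain $\eta=0$'') tacitly requires $T_0\ne0$ as well, consistent with the surrounding text, which treats the case $T_0=0$ separately via the functional (\ref{UniversalFunctionalTNull}) in Propositions \ref{PropoContinuity2} and \ref{DifferentialTNull}.
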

\begin{proof}
If $\mathbb{S}(\rho)=\mathbb{S}(\rho_0)$ then, since $\mathbb{S}(\rho_0)$ is unitary we have $F(\rho)=0$.\\
Conversely, assume that $F(\rho)=0$. In that case, we find that $\mathbb{M}=\overline{\mathbb{S}(\rho_0)}\,\mathbb{S}(\rho)$ is such that $\mathbb{M}_{11}$ is real and $\mathbb{M}_{21}=0$. Since the product of two unitary matrices is unitary, we know that $\mathbb{M}$ is unitary. Therefore, we must have $|\mathbb{M}_{11}|=|\mathbb{M}_{22}|=1$ and $\mathbb{M}_{12}=0$. From the fact that $\mathbb{M}_{11}$ is real and $\|\rho-\rho_0\|_{\mrm{L}^{\infty}(\mathscr{O})} \le \eps$, we infer that $\mathbb{M}_{11}=1$. Thus, we have
\[
\overline{\mathbb{S}(\rho_0)}\,\mathbb{S}(\rho)=\left(\begin{array}{cc}
1 & 0\\
0 & e^{i\eta}
\end{array}\right)
\]
for some $\eta\in[0;2\pi)$. Multiplying the above equality on the left by $\mathbb{S}(\rho_0)$, we find
\[
\mathbb{S}(\rho)=\left(\begin{array}{cc}
R^+_0 & T_0\,e^{i\eta}\\
T_0 & R^-_0\,e^{i\eta}
\end{array}\right).
\]
Finally, using that $\mathbb{S}(\rho)$ is symmetric, we obtain $\eta=0$ and so $\mathbb{S}(\rho)=\mathbb{S}(\rho_0)$.
\end{proof}

\begin{proposition}\label{DifferentialUniversal}
Set $0<k<\pi$ ($N=1$). Then the map $dF(\rho_0):\mrm{L}^{\infty}(\mathscr{O})\to\R^3$ with $F$ defined in (\ref{UniversalFunctional}) is onto if and only if $\rho_0\in\mrm{L}^{\infty}(\mathscr{O})$ is such that $T_0=T(\rho_0)\ne0$.
\end{proposition}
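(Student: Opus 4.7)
The plan is to proceed exactly as in Propositions \ref{DifferentialR} and \ref{DifferentialT}: compute $dF(\rho_0)(\mu)$ explicitly, apply Lemma \ref{lemmaGram} to reduce ontoness to a linear independence question for three real-valued functions of $u_0^\pm := u^\pm(\rho_0)$, and then treat the two cases $T_0=0$ and $T_0\ne0$ separately using the identity (\ref{ImportantRelation1D}) and Lemma \ref{lem-Q(X,Y)}.

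First, since $\mathbb{M}(\rho_0)=\mrm{Id}$, the differential of $\mathbb{M}=\overline{\mathbb{S}(\rho_0)}\,\mathbb{S}$ at $\rho_0$ is $d\mathbb{M}(\rho_0)(\mu)=\overline{\mathbb{S}(\rho_0)}\,d\mathbb{S}(\rho_0)(\mu)$, so by the formulas of \S\ref{ParagraphDiff} (see in particular (\ref{DiffSca1D})) I obtain
\[
dF(\rho_0)(\mu)=k^2\Bigl(\int_{\Om}\mu\,|u_0^+|^2\,dz,\; -\int_{\Om}\mu\,\Im m(\overline{u_0^-}u_0^+)\,dz,\; \int_{\Om}\mu\,\Re e(\overline{u_0^-}u_0^+)\,dz\Bigr).
\]
By Lemma \ref{lemmaGram}, it is therefore sufficient to decide when the family
\[
\mathcal{F}:=\{\,|u_0^+|^2,\;\Re e(\overline{u_0^-}u_0^+),\;\Im m(\overline{u_0^-}u_0^+)\,\}
\]
is linearly independent in $\mrm{L}^\infty(\mathscr{O})$.

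For the easy direction, assume $T_0=0$. Then by conservation of energy $|R_0^\pm|=1$, so $R_0^+=e^{i\theta}$, $R_0^-=e^{i\phi}$. The identity (\ref{ImportantRelation1D}) with $T_0=0$ reduces to $\overline{R_0^+}u_0^+=\overline{u_0^+}$ and $\overline{R_0^-}u_0^-=\overline{u_0^-}$, yielding (as in Proposition \ref{propositionImagMono}) $u_0^+=e^{i\theta/2}f$ and $u_0^-=e^{i\phi/2}g$ with $f,g$ real. Then $|u_0^+|^2=f^2$ while $\overline{u_0^-}u_0^+=e^{i(\theta-\phi)/2}\,fg$, so both $\Re e(\overline{u_0^-}u_0^+)$ and $\Im m(\overline{u_0^-}u_0^+)$ are scalar multiples of the same real function $fg$. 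Hence $\mathcal{F}$ is linearly dependent and $dF(\rho_0)$ is not onto.

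For the harder direction, assume $T_0\ne0$ and suppose $\alpha|u_0^+|^2+\beta\,\Re e(\overline{u_0^-}u_0^+)+\gamma\,\Im m(\overline{u_0^-}u_0^+)=0$ on $\mathscr{O}$ for some real $\alpha,\beta,\gamma$. Writing $\zeta:=\beta-i\gamma$, the relation reads $\alpha|u_0^+|^2+\Re e(\zeta\,\overline{u_0^-}u_0^+)=0$. Using the first line of (\ref{ImportantRelation1D}), which since $T_0\ne0$ gives $\overline{u_0^-}=(u_0^+-R_0^+\overline{u_0^+})/T_0$, I eliminate $u_0^-$ to obtain
\[
\bigl(\alpha-\Re e(\zeta R_0^+/T_0)\bigr)|u_0^+|^2+\Re e\bigl((\zeta/T_0)(u_0^+)^2\bigr)=0\quad\text{on }\mathscr{O}.
\]
Setting $a=\Re e(u_0^+)$, $b=\Im m(u_0^+)$, $\kappa=\alpha-\Re e(\zeta R_0^+/T_0)$ and $\zeta/T_0=p-iq$, this becomes the quadratic form identity $(\kappa+p)a^2+2q\,ab+(\kappa-p)b^2=0$ on $\mathscr{O}$. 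I then apply Lemma \ref{lem-Q(X,Y)}: either all coefficients vanish, or there is a non-trivial real linear combination $Aa+Bb\equiv0$ in $\Omega$. The latter means $\Re e((A-iB)u_0^+)\equiv0$, which contradicts the asymptotic $u_0^+\sim(2k)^{-1/2}T_0\,e^{ikx}$ as $x\to+\infty$ when $T_0\ne0$ and $(A,B)\ne(0,0)$. Hence $\kappa+p=\kappa-p=q=0$, giving $\kappa=p=q=0$, hence $\zeta=0$ (so $\beta=\gamma=0$) and then $\alpha=\kappa=0$. This proves $\mathcal{F}$ is linearly independent.

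The main obstacle is the hard direction: the substitution $u_0^-\leftrightarrow u_0^+$ via (\ref{ImportantRelation1D}) is the key algebraic trick (as already used in Proposition \ref{DifferentialT}), and one has to check that after reduction to a purely $u_0^+$-based quadratic form, the three linear conditions on $(\kappa,p,q)$ force $(\alpha,\beta,\gamma)=(0,0,0)$ precisely because $T_0\ne0$ appears as denominator and through the asymptotic behavior of $u_0^+$.
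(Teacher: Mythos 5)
Your proof is correct and follows essentially the same route as the paper: reduce ontoness via Lemma \ref{lemmaGram} to the linear independence of $\{|u^+|^2,\Re e(u^+\overline{u^-}),\Im m(u^+\overline{u^-})\}$, eliminate $u^-$ through (\ref{ImportantRelation1D}), and conclude with Lemma \ref{lem-Q(X,Y)} plus the asymptotics of $u^+$ at $+\infty$. Your complex-notation bookkeeping ($\zeta=\beta-i\gamma$) and the explicit phase factorization $u^\pm=e^{i\theta/2}f$ in the $T_0=0$ case are slightly cleaner than the paper's real-constant computations, but the substance is identical.
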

\begin{proof}
Using identities (\ref{DiffSca1D}), we obtain 
\[
dF(\rho_0)(\mu)=k^2(\dsp\int_{\Om}\mu |u^+|^2\,dz,-\Im m\,\dsp\int_{\Om}\mu u^+\,\overline{u^-}\,dz,\Re e\,\dsp\int_{\Om}\mu u^+\,\overline{u^-}\,dz).
\]
We emphasize that here the $u^{\pm}$ are the total fields for the problem (\ref{PbChampTotalBIS}) with physical coefficient $\rho_0$. From Lemma \ref{lemmaGram}, we deduce that $dF(\rho_0):\mrm{L}^{\infty}(\mathscr{O})\to\R^3$ is onto if and only if the family of real functions $\{|u^+|^2,\Im m\,(u^+\,\overline{u^-}), \Re e\,(u^+\,\overline{u^-})\}$ is linearly independent.\\ [4pt]
$\star$ First, we consider the situation where $T_0\ne0$. From formula (\ref{ImportantRelation1D}), we know that $\overline{u^-}=(u^+-R^+_0\overline{u^+})/T_0$. We deduce
\begin{equation}\label{eqnOnto1Univ}
u^+\overline{u^-} =  (u^+)^2/T_0-R^+_0|u^+|^2/T_0.
\end{equation}
Introduce the real constants $A$, $B$, $C$, $D$ such that $1/T_0=A+iB$ and $-R^+_0/T_0=C+iD$. Note that there holds $(A,B)\ne(0,0)$. Using these notations in (\ref{eqnOnto1Univ}), we get
\[
\begin{array}{ll}
\Re e\,((u^+)^2/T_0)=A\,\Re e\,((u^+)^2)-B\,\Im m\,((u^+)^2);&\ \Im m\,((u^+)^2/T_0)=B\,\Re e\,((u^+)^2)+A\,\Im m\,((u^+)^2);\\[6pt]
\Re e\,(-R^+_0|u^+|^2/T_0)=C\,|u^+|^2; &\ \Im m\,(-R^+_0|u^+|^2/T_0)=D\,|u^+|^2;
\end{array}
\]
and so 
\begin{equation}\label{ExpansionReIm}
\begin{array}{rcl}
\Im m\,(u^+\overline{u^-})&\hspace{-0.15cm}=&\hspace{-0.15cm}D\,|u^+|^2+B\,\Re e\,((u^+)^2)+A\,\Im m\,((u^+)^2)\\[2pt]
\Re e\,(u^+\overline{u^-})&\hspace{-0.15cm}=&\hspace{-0.15cm}C\,|u^+|^2+A\,\Re e\,((u^+)^2)-B\,\Im m\,((u^+)^2).
\end{array}
\end{equation}
Assume that there are some $\alpha$, $\beta$, $\gamma\in\R$ such that 
\begin{equation}\label{eqnOnto2Univ}
\alpha\,|u^+|^2+\beta\,\Im m\,(u^+\overline{u^-})+\gamma\,\Re e\,(u^+\overline{u^-})=0\quad \mbox{ in }\mathscr{O}.
\end{equation}
Inserting (\ref{ExpansionReIm}) in (\ref{eqnOnto2Univ}), we find that we must have
\begin{equation}\label{eqnOnto2UnivEquiv}
\eta_1\,|u^+|^2+\eta_2\,\Re e\,((u^+)^2)+\eta_3\,\Im m\,((u^+)^2)=0\quad \mbox{ in }\mathscr{O}\qquad\mbox{ with }\quad\begin{array}{|l}
\eta_1=\alpha+\beta\,D+\gamma\,C\\
\eta_2=\beta\,B+\gamma\,A\\
\eta_3=\beta\,A-\gamma\,B.
\end{array}
\end{equation}
Set again $a=\Re e\,u^+$ and $b=\Im m\,u^+$. Equation (\ref{eqnOnto2UnivEquiv}) is equivalent to 
\[
(\eta_1+\eta_2)\,a^2+\eta_3\,2ab+(\eta_1-\eta_2)\,b^2=0\quad \mbox{ in }\mathscr{O}.
\]
Working as in the proof of Proposition \ref{DifferentialR}, we obtain $\eta_1+\eta_2=\eta_3=\eta_1-\eta_2=0$ and so $\eta_1=\eta_2=\eta_3=0$. From the equations $\eta_2=\eta_3=0$ and the fact that $(A,B)\ne(0,0)$, we deduce that $\beta=\gamma=0$. Then, since $\eta_1=0$, we must also have $\alpha=0$. Thus $\{|u^+|^2,\Im m\,(u^+\,\overline{u^-}), \Re e\,(u^+\,\overline{u^-})\}$ is a family of linearly independent functions.\\[4pt]
$\star$ Now we assume that $T_0=0$. Set
\[
a=\Re e\,u^+;\qquad b=\Im m\,u^+;\qquad c=\Re e\,u^-;\qquad d=\Im m\,u^-.
\]
From Proposition \ref{propositionImagMono}, we know that there are some real constants $A$, $B$, $C$, $D$ with $(A,B)\ne(0,0)$ and $(C,D)\ne(0,0)$ such that
\[
A\,a+B\,b=0\qquad\mbox{ and }\qquad C\,c+D\,d=0\quad\mbox{ in }\mathscr{O}.
\]
Then one can check that there are $\beta$, $\gamma\in\R$ with $(\beta,\gamma)\ne(0,0)$ such that 
\begin{equation}\label{eqnOnto2UnivBis}
\begin{array}{ll}
&\beta\,\Im m\,(u^+\overline{u^-})+\gamma\,\Re e\,(u^+\overline{u^-})=0\quad \mbox{ in }\mathscr{O}\\[6pt]
\Leftrightarrow & \beta\,(bc-ad)+\gamma\,(ac+bd)=0\quad\mbox{ in }\mathscr{O}\\[6pt]
\Leftrightarrow & a(\gamma c-\beta d)+b(\beta c+\gamma d)=0\quad\mbox{ in }\mathscr{O}.
\end{array}
\end{equation}
Indeed if $a\equiv0$, then one can take $\beta=C$ and $\gamma=D$. If $b\equiv0$, then one can take $\beta=-D$ and $\gamma=C$. And if both $a\not\equiv0$ and $b\not\equiv0$, one has 
\[
a(\gamma c-\beta d)+b(\beta c+\gamma d)=0\quad\mbox{ in }\mathscr{O}\quad\Leftrightarrow\quad (\gamma-\beta A/B) c- (\beta +\gamma A/B)d=0.
\]
Thus one has to solve the system $\gamma-\beta A/B=C$ and $- (\beta +\gamma A/B)=D$ with respect to $(\beta,\gamma)$. And this system admits a non zero solution. 
\end{proof}

\begin{remark}\label{RmkDiffNRJBis}
In the Remark \ref{RmkDiffNRJ}, we saw that when $T(\rho_0)=0$, the map    $dR(\rho_0):\mL^{\infty}(\mathscr{O})\to\Cplx$ is not onto. Let us prove that in this case $dT(\rho_0):\mL^{\infty}(\mathscr{O})\to\Cplx$ is not onto either. The unitarity of $\mathbb{S}$ imposes $\overline{R^+}T+\overline{T}R^-=0$ (see (\ref{Unitary1D})). Differentiating this identity and using that $T(\rho_0)=0$, we find 
$\overline{R^+_0}dT(\rho_0)(\mu)+R^-_0\overline{dT(\rho_0)(\mu)}=0$. As a consequence, there are some real constants $A$, $B$ independent of $\mu\in\mL^{\infty}(\mathscr{O})$, with $(A,B)\ne(0,0)$, such that $A\,\Re e\,(dT(\rho_0)(\mu))+B\,\Im m\,(dT(\rho_0)(\mu))=0$. Thus  $dT(\rho_0):\mL^{\infty}(\mathscr{O})\to\Cplx$ cannot be onto.
\end{remark}

\noindent In the remaining part of this paragraph, we explain how to impose $\mathbb{S}(\rho)=\mathbb{S}(\rho_0)$ when $\mathbb{S}(\rho_0)$ is such that $T_0=T(\rho_0)=0$. To proceed, we need to work with a new functional $F$ because the one of (\ref{UniversalFunctional}) is such that $dF(\rho_0):\mrm{L}^{\infty}(\mathscr{O})\to\R^3$ is not onto (Proposition \ref{DifferentialUniversal}) in this case. Differentiating the relations (\ref{Unitary1D}) and using that $T(\rho_0)=0$, we obtain, for all $\mu\in\mL^{\infty}(\mathscr{O})$,
\[
\Re e\,(\overline{R^+_0}dR^+(\rho_0)(\mu))=0;\qquad\Re e\,(\overline{R^-_0}dR^-(\rho_0)(\mu))=0;\qquad \Re e\,(\sqrt{\overline{R^+_0}\overline{R^-_0}}\,dT(\rho_0)(\mu))=0
\]
(for the third one, from (\ref{Unitary1D}) we get $\overline{R^+_0}dT(\rho_0)(\mu)+R^-_0d\overline{T}(\rho_0)(\mu)=0$ and then we use that $|R^+_0|=|R^-_0|=1$). The first and third relations have already been obtained in Remarks \ref{RmkDiffNRJ} and \ref{RmkDiffNRJBis} respectively. This leads us to define the map $F:\mrm{L}^{\infty}(\mathscr{O})\to\R^3$ such that 
\begin{equation}\label{UniversalFunctionalTNull}
F(\rho)=(\Im m\,(\overline{R^+_0}\,R^+),\Im m\,(\overline{R^-_0}\,R^-),\Im m\,(\sqrt{\overline{R^+_0}\overline{R^-_0}}\,T)).
\end{equation}
\begin{proposition}\label{PropoContinuity2}
Let $F$ be as in (\ref{UniversalFunctionalTNull}) and $\rho_0\in\mrm{L}^{\infty}(\mathscr{O})$ be such that $T_0=T(\rho_0)=0$. There exists $\eps>0$ such that for $\|\rho-\rho_0\|_{\mrm{L}^{\infty}(\mathscr{O})} \le \eps$, we have $\mathbb{S}(\rho)=\mathbb{S}(\rho_0)$ if and only if $F(\rho)=0$. 
\end{proposition}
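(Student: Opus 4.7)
The plan is to follow closely the pattern of Propositions \ref{PropoContinuity0} and \ref{PropoContinuity}: the forward direction is immediate, and the reverse direction will be obtained by exploiting the unitarity and symmetry of $\mathbb{S}$ together with the smallness of $\|\rho-\rho_0\|_{\mL^{\infty}(\mathscr{O})}$ to fix remaining sign/phase indeterminacies.

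\textbf{Forward direction.} Assume $\mathbb{S}(\rho)=\mathbb{S}(\rho_0)$. Then $\overline{R^+_0}R^+(\rho)=|R^+_0|^2=1$, $\overline{R^-_0}R^-(\rho)=|R^-_0|^2=1$ (using $T_0=0$, which by (\ref{Unitary1D}) forces $|R^{\pm}_0|=1$), and $\sqrt{\overline{R^+_0}\overline{R^-_0}}\,T(\rho)=0$. All three quantities are real, so $F(\rho)=0$.

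\textbf{Reverse direction.} Assume $F(\rho)=0$ with $\|\rho-\rho_0\|_{\mL^{\infty}(\mathscr{O})}\le\eps$. First, the vanishing of the first two components of $F$ gives $\overline{R^+_0}R^+(\rho)\in\R$ and $\overline{R^-_0}R^-(\rho)\in\R$. Since $|R^{\pm}_0|=1$, these can be rewritten $R^+(\rho)=r\,R^+_0$ and $R^-(\rho)=s\,R^-_0$ for some $r,s\in\R$; continuity of the scattering coefficients w.r.t.~$\rho$ (Proposition \ref{PropoDiffScaMat}) ensures that $r$ and $s$ are close to $1$, hence positive. Next, writing $\zeta=\sqrt{\overline{R^+_0}\overline{R^-_0}}$ (a fixed unit-modulus complex number), the third component gives $\zeta\,T(\rho)\in\R$, i.e.~$T(\rho)=\tau\,\overline{\zeta}$ for some $\tau\in\R$.

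The key computation is then to use the third identity of (\ref{Unitary1D}) applied to $\mathbb{S}(\rho)$, namely $\overline{R^+(\rho)}\,T(\rho)+\overline{T(\rho)}\,R^-(\rho)=0$. Substituting the representations above yields
\[
r\tau\bigl(\overline{R^+_0}\,\overline{\zeta}+\zeta\,R^-_0\bigr)=0.
\]
Writing $R^{\pm}_0=e^{i\theta_{\pm}}$ and $\zeta=e^{-i(\theta_++\theta_-)/2}$ (either sign choice works), one checks directly that $\overline{R^+_0}\,\overline{\zeta}+\zeta\,R^-_0=2e^{i(\theta_--\theta_+)/2}\neq 0$. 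Since $r$ is close to $1$ and in particular nonzero, we conclude $\tau=0$, so $T(\rho)=0=T_0$. Then conservation of energy $|R^{\pm}(\rho)|^2+|T(\rho)|^2=1$ forces $r^2=s^2=1$, and the proximity of $\rho$ to $\rho_0$ selects $r=s=1$. Therefore $R^{\pm}(\rho)=R^{\pm}_0$ and $T(\rho)=T_0$, i.e.~$\mathbb{S}(\rho)=\mathbb{S}(\rho_0)$.

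The only mildly delicate point is the cancellation argument for $T$: everything hinges on the non-vanishing of the coefficient $\overline{R^+_0}\,\overline{\zeta}+\zeta\,R^-_0$, which is precisely guaranteed by the definition of $\zeta$ as the square root of $\overline{R^+_0 R^-_0}$ (and it is here that one sees \textit{a posteriori} why this specific combination appears in the third component of $F$ in (\ref{UniversalFunctionalTNull})).
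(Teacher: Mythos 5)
Your proof is correct and follows essentially the same route as the paper: the forward direction is immediate, and the reverse direction pins the phases of $R^{\pm}$ via the first two components of $F$, then combines the unitarity relation $\overline{R^+}T+\overline{T}R^-=0$ with $\Im m\,(\zeta T)=0$ to force $T=0$, and finishes with conservation of energy and the smallness of $\rho-\rho_0$. One cosmetic imprecision: direct substitution gives $\tau\bigl(r\,\overline{R^+_0}\,\overline{\zeta}+s\,\zeta R^-_0\bigr)=0$ rather than $r\tau\bigl(\overline{R^+_0}\,\overline{\zeta}+\zeta R^-_0\bigr)=0$ (you need $|R^+|=|R^-|$, i.e.\ $r=s$, from unitarity to write it your way), but since both terms carry the same phase $e^{i(\theta_--\theta_+)/2}$ and $r,s>0$, the conclusion $\tau=0$ is unaffected.
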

\begin{proof}
Let $\rho_0\in\mrm{L}^{\infty}(\mathscr{O})$ be such that $T_0=T(\rho_0)=0$. First observe that if $\rho$ is such that
$\mathbb{S}(\rho)=\mathbb{S}(\rho_0)$, then we have $R^{\pm}=R^{\pm}_0$ and $T=T_0=0$. This implies $\Im m\,(\overline{R^{\pm}_0}\,R^{\pm})=\Im m\,(|R^{\pm}_0|^2)=0$ and so $F(\rho)=F(\rho_0)=0$.\\
Now assume that $\rho$ is such that $\|\rho-\rho_0\|_{\mrm{L}^{\infty}(\mathscr{O})} \le \eps$ for $\eps$ small enough and $F(\rho)=0$. When $T_0=0$, we have $|R^+_0|=|R^-_0|=1$ and so there are $\alpha^{\pm}\in[0;2\pi)$ such that $R^{\pm}_0=e^{i\alpha^{\pm}_0}$. On the other hand, we can write the coefficients $R^{\pm}$ as $R^{\pm}=|R^{\pm}|\,e^{i\alpha^{\pm}}$ with $\alpha^{\pm}\in[0;2\pi)$. Since $R^{\pm}$ are small perturbations of $R^{\pm}_0$ for $\eps$ small, we deduce that $|R^{\pm}|>0$. When $F(\rho)=0$, there holds $\Im m\,(\overline{R^+_0}\,R^+)=\Im m\,(\overline{R^-_0}\,R^-)=0$ and so $\alpha^+=\alpha^+_0$, $\alpha^-=\alpha^-_0$ (again here we use the argument of small perturbation). When $F(\rho)=0$, we also have
\begin{equation}\label{relPhase1}
0=\Im m\,(\sqrt{\overline{R^+_0}\overline{R^-_0}}\,T).
\end{equation}
But the unitarity of $\mathbb{S}(\rho)$ implies $0=\overline{R^+}T+\overline{T}R^-$ and $|R^{+}|=|R^{-}|$. Dividing by $|R^{\pm}|$, we get $0=\overline{R^+_0}T+\overline{T}R^-_0$ which leads to
\begin{equation}\label{relPhase2}
0=\Re e\,(\sqrt{\overline{R^+_0}\overline{R^-_0}}\,T).
\end{equation}
From (\ref{relPhase1}), (\ref{relPhase2}), we infer that $T=0$ and so $|R^{\pm}|=1$ by conservation of energy. This gives $R^+=R^+_0$, $R^-=R^-_0$ and so $\mathbb{S}(\rho)=\mathbb{S}(\rho_0)$. 
\end{proof}
\noindent Now we study the question of the ontoness of the differential of the function $F$ in (\ref{UniversalFunctionalTNull}). 
\begin{proposition}\label{DifferentialTNull}
Set $0<k<\pi$ ($N=1$) and assume that $T_0=T(\rho_0)=0$. Then the map $dF(\rho_0):\mrm{L}^{\infty}(\mathscr{O})\to\R^3$ with $F$ defined in (\ref{UniversalFunctionalTNull}) is onto.
\end{proposition}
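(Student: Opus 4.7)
The plan is to compute $dF(\rho_0)(\mu)$ explicitly, rewrite it using real-valued auxiliary solutions that become available because of the hypothesis $T_0 = 0$, and then reduce ontoness to the linear independence of three quadratic monomials, which is handled by Lemma \ref{lem-Q(X,Y)} together with Lemma \ref{lemmaGram}.

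Since $T_0 = 0$, conservation of energy (\ref{Unitary1D}) gives $|R^{\pm}_0| = 1$, so we may write $R^{\pm}_0 = e^{i\alpha^{\pm}_0}$ for some $\alpha^{\pm}_0 \in [0;2\pi)$. Arguing exactly as in Proposition \ref{propositionImagMono}, relation (\ref{ImportantRelation1D}) becomes $\overline{R^{\pm}_0}\,u^{\pm} = \overline{u^{\pm}}$, so that the functions $U^{\pm} := e^{-i\alpha^{\pm}_0/2}\,u^{\pm}$ are real-valued (and are real solutions of (\ref{InitialPb}) with coefficient $\rho_0$). A direct computation then gives
\[
\overline{R^+_0}\,(u^+)^2 = (U^+)^2,\quad \overline{R^-_0}\,(u^-)^2 = (U^-)^2,\quad \sqrt{\overline{R^+_0}\overline{R^-_0}}\,u^+u^- = \epsilon\,U^+U^-,
\]
where $\epsilon \in \{-1,+1\}$ depends on the branch of the square root. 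Combining with (\ref{DiffSca1D}) and the identity $\Im m\,(iz) = \Re e\,(z)$ (valid for any $z \in \Cplx$), and using that $\mu$ is real-valued, we obtain
\[
dF(\rho_0)(\mu) = k^2\Big(\int_{\Om}\mu\,(U^+)^2\,dz,\ \int_{\Om}\mu\,(U^-)^2\,dz,\ \epsilon\int_{\Om}\mu\,U^+U^-\,dz\Big).
\]

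By Lemma \ref{lemmaGram}, $dF(\rho_0):\mL^{\infty}(\mathscr{O})\to\R^3$ is onto if and only if the three real functions $(U^+)^2$, $(U^-)^2$, $U^+U^-$ are linearly independent on $\mathscr{O}$. Suppose that some real constants $\alpha,\beta,\gamma$ satisfy $\alpha(U^+)^2 + \beta(U^-)^2 + \gamma U^+U^- = 0$ on $\mathscr{O}$. If $(\alpha,\beta,\gamma)\neq (0,0,0)$, Lemma \ref{lem-Q(X,Y)} applied to $Q(X,Y)=\alpha X^2 + \gamma XY + \beta Y^2$ (here $2\beta_{\text{lemma}} = \gamma$) yields a non-trivial linear combination $\lambda U^+ + \eta U^- = 0$ in all of $\Om$.

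The last step is to rule out such a relation by exploiting the behaviour of $U^{\pm}$ at infinity, and this is the only non-algebraic point. From (\ref{scatteredFieldParticular}), when $T_0 = 0$ the field $u^+$ is exponentially decaying for $x \ge \ell$ (no outgoing propagating contribution), while for $x \le -\ell$ it reads $u^+ = (2k)^{-1/2}(e^{ikx} + R^+_0\,e^{-ikx}) + \tilde{u}^+$; multiplying by $e^{-i\alpha^+_0/2}$ shows that $U^+ = (2/k)^{1/2}\cos(kx - \alpha^+_0/2)$ plus exponentially small remainder for $x \le -\ell$, and is exponentially decaying for $x \ge \ell$. Symmetrically $U^-$ is exponentially decaying for $x \le -\ell$ and behaves like a non-zero real cosine for $x \ge \ell$. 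Hence if $\lambda U^+ + \eta U^- \equiv 0$ in $\Om$, inspecting the limit $x\to +\infty$ forces $\eta = 0$, and then $x \to -\infty$ forces $\lambda = 0$. This contradicts the non-triviality of $(\lambda,\eta)$, so $\alpha=\beta=\gamma=0$ and ontoness follows.
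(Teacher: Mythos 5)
Your proof is correct and follows essentially the same route as the paper's: reduce ontoness to the linear independence of three real quadratic combinations of the scattering fields via Lemma \ref{lemmaGram}, apply Lemma \ref{lem-Q(X,Y)}, and derive a contradiction from the fact that, when $T_0=0$, $u^+$ decays exponentially at $+\infty$ where $u^-$ does not (and vice versa). Your device of first rotating to the real-valued fields $U^{\pm}=e^{-i\alpha^{\pm}_0/2}u^{\pm}$ before invoking Lemma \ref{lem-Q(X,Y)} is in fact a slightly cleaner packaging than the paper's argument, which works with $|u^+|^2$, $|u^-|^2$, $\kappa\,u^+u^-$ and consequently has to localize to an open subset where $\kappa\,u^+u^-=\pm|u^+|\,|u^-|$ has a fixed sign and to distinguish cases according to whether $ABCD=0$.
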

\begin{proof}
When $T_0=0$, according to (\ref{ImportantRelation1D}), we have
\begin{equation}\label{ConstitutiveTNull}
\overline{R^+_0}u^+=\overline{u^+}\qquad\mbox{ and }\qquad\overline{R^-_0}u^-=\overline{u^-}.
\end{equation}
Denote $\kappa=\sqrt{\overline{R^+_0R^-_0}}$. Since $|\kappa|=1$ so that $\kappa^{-1}=\overline{\kappa}$, from (\ref{ConstitutiveTNull}), we infer that 
\begin{equation}\label{RelationCplx}
\kappa\,u^+u^-=\overline{\kappa\,u^+u^-},
\end{equation}
which ensures that $\kappa\,u^+u^-$ is real. Using (\ref{DiffSca1D}) and (\ref{ConstitutiveTNull}), we get
\[
dF(\rho_0)(\mu)=k^2(\dsp\int_{\Om}\mu |u^+|^2\,dz,\dsp\int_{\Om}\mu |u^-|^2\,dz,\dsp\int_{\Om}\mu\,\kappa\,u^+u^-\,dz).
\]
Note that to obtain the third component of $dF(\rho_0)(\mu)$, from (\ref{DiffSca1D}), we wrote successively
\[
\Im m\,(\sqrt{\overline{R^+_0}\overline{R^-_0}}\,dT(\rho_0)(\mu))=k^2\dsp\int_{\Om}\mu\,\Re e\,(\sqrt{\overline{R^+_0}\overline{R^-_0}}u^+u^-)\,dz=k^2\dsp\int_{\Om}\mu\,\kappa\,u^+u^-\,dz.
\]
From Lemma \ref{lemmaGram}, we infer that $dF(\rho_0):\mrm{L}^{\infty}(\mathscr{O})\to\R^3$ is onto if and only if $\{|u^+|^2,|u^-|^2,\kappa\,u^+u^-\}$ is a family of linearly independent functions. Set $a=\Re e\,u^+$, $b=\Im m\,u^+$, $c=\Re e\,u^-$ and $d=\Im m\,u^-$. From (\ref{ConstitutiveTNull}), we know that there are some real constants $A$, $B$, $C$, $D$ with $(A,B)\ne(0,0)$ and $(C,D)\ne(0,0)$ such that
\begin{equation}\label{ABCDeqn}
A\,a+B\,b=0\qquad\mbox{ and }\qquad C\,c+D\,d=0\quad\mbox{ in }\Om.
\end{equation}
Assume that are some $\alpha$, $\beta$, $\gamma\in\R$ such that 
\begin{equation}\label{RelatSign}
\alpha\,|u^+|^2+\beta\,|u^-|^2+\gamma\,\kappa\,u^+u^-=0\quad \mbox{ in }\mathscr{O}.
\end{equation}
Since $u^+$ and $u^-$ cannot be null on non empty open sets, we know that there is a non empty open set $\mathscr{O}'\subset\mathscr{O}$ where $\kappa\,u^+u^-$ does not vanish. Then in $\mathscr{O}'$, we have $\kappa\,u^+u^-=\pm |u^+|\,|u^-|$ (remember that $|\kappa|=1$). Assume first that $ABCD\ne0$. Then from (\ref{RelatSign}), we get 
\[
\alpha(1+A^2/B^2)\,a^2+\beta(1+C^2/D^2)\,c^2\pm\gamma\sqrt{1+A^2/B^2}\sqrt{1+C^2/D^2}\,|a|\,|c|=0\quad \mbox{ in }\mathscr{O}'.
\]
According to Lemma \ref{lem-Q(X,Y)} (observing that $a$, $c$ are continuous, one can verify that one can use this lemma), if $(\alpha,\beta,\gamma)\ne(0,0,0)$, then there are $(\lambda_1,\lambda_2)\ne(0,0)$ such that $\lambda_1\,a+\lambda_2\,c=0$. This is impossible because $a$ (resp. $c$) is exponentially decaying as $x\to+\infty$ (resp. $x\to-\infty$) while $c$ (resp. $a$) is not. Thus, we must have $\alpha= \beta=\gamma=0$. \\
The different cases where $ABCD=0$ can be dealt with in a similar way. 
\end{proof}

\noindent Finally, to impose relative invisibility in monomode regime, the situation is as follows. When $\rho_0$ is such that $\Re e\,T_0\ne0$, one can work with the functional  $F(\rho)=( \Re e\,R^+, \Im m\,R^+,\Im m\,T)$ defined in (\ref{DefFTOne1D}). When $T_0\ne0$, one can work with the functional $F$ defined in (\ref{UniversalFunctional}). When $T_0=0$, one can work with the functional $F$ defined in (\ref{UniversalFunctionalTNull}). And from Theorem \ref{the-ptfixe} as well as Propositions \ref{PropoDiffScaMat}, \ref{PropoContinuity0}, \ref{PropoContinuity}, \ref{DifferentialUniversal}, \ref{PropoContinuity2}, \ref{DifferentialTNull}, we can state the following result. Here $\mathscr{\K}=\mrm{span}(\mu_1,\mu_2,\mu_3)$ is a subspace of $\mrm{L}^{\infty}(\mathscr{O})$ of dimension $3$ such that $dF(\rho_0):\mathscr{\K}\to\R^3$ is a bijection. 
\begin{theorem}\label{RelativeInv1D}
Set $0<k<\pi$ ($N=1$). Assume that $\rho_0\in\mrm{L}^{\infty}(\mathscr{O})$ is such that trapped modes do not exist for the problem (\ref{InitialPb}). Let $\mu_0$ be a non-trivial element of $dF(\rho_0)$. Then for all $\radius>0$, there is $\eps_0>0$ such that for all $\eps\in(0;\varepsilon_0]$
\[
\exists!\tau=(\tau_1,\tau_2,\tau_3)\in \overline{B(O,\radius)}\subset\R^3\mbox{ such that }\mathbb{S}(\rho_0+\varepsilon(\mu_0+\sum_{j=1}^3\tau_j\mu_j))=\mathbb{S}(\rho_0).
\]
\end{theorem}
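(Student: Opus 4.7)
The plan is to reduce to Theorem~\ref{the-ptfixe} by choosing, according to the value of $\mathbb{S}(\rho_0)$, a suitable $\R^3$-valued functional $F$ whose vanishing locally characterises the identity $\mathbb{S}(\rho)=\mathbb{S}(\rho_0)$, and then invoking the corresponding continuity statement to upgrade $F(\rho)=0$ into $\mathbb{S}(\rho)=\mathbb{S}(\rho_0)$. A two-case split based on whether $T_0:=T(\rho_0)$ vanishes or not is natural: Proposition~\ref{DifferentialUniversal} shows that the functional (\ref{UniversalFunctional}) has an onto differential precisely when $T_0\neq 0$, while a separate functional, namely (\ref{UniversalFunctionalTNull}), is needed when $T_0=0$.

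\textbf{Case 1: $T(\rho_0)\neq 0$.} I take $F$ as in (\ref{UniversalFunctional}). The unitarity relations (\ref{Unitary1D}) at $\rho_0$ give $F(\rho_0)=0$. By Proposition~\ref{PropoDiffScaMat}, since no trapped modes exist at the considered $k$, the map $\mathbb{S}$ (and hence $F$) is of class $\mathscr{C}^1$ in a neighbourhood of $\rho_0$ in $\mL^\infty(\mathscr{O})$, and Proposition~\ref{DifferentialUniversal} ensures that $dF(\rho_0):\mL^\infty(\mathscr{O})\to \R^3$ is onto. I then pick $\mu_1,\mu_2,\mu_3 \in \mL^\infty(\mathscr{O})$ with $dF(\rho_0)(\mu_j)=e_j$, so that $\mathscr{K}=\mrm{span}(\mu_1,\mu_2,\mu_3)$ has dimension $3$.

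\textbf{Case 2: $T(\rho_0)=0$.} I take $F$ as in (\ref{UniversalFunctionalTNull}). Then $F(\rho_0)=0$ is immediate from $\Im m\,(\overline{R_0^\pm} R_0^\pm)=0$ and $T_0=0$. The same $\mathscr{C}^1$ regularity holds, and Proposition~\ref{DifferentialTNull} yields ontoness of $dF(\rho_0)$, producing a 3-dimensional $\mathscr{K}=\mrm{span}(\mu_1,\mu_2,\mu_3)$ as before. In both cases, Theorem~\ref{the-ptfixe} applied with the non-trivial element $\mu_0\in\ker\,dF(\rho_0)$ of the statement delivers, for the chosen $\radius>0$, an $\eps_0>0$ such that for every $\eps\in(0,\eps_0]$ there is a unique $\tau\in\overline{B(O,\radius)}\subset\R^3$ with $F\bigl(\rho_0 + \eps(\mu_0 + \sum_{j=1}^{3}\tau_j \mu_j)\bigr)=0$. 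Setting $\eta:=\eps(\mu_0+\sum_j \tau_j \mu_j)$, a further shrinking of $\eps_0$ (using $\|\eta\|_{\mL^\infty(\mathscr{O})}\le \eps_0(\|\mu_0\|_{\mL^\infty(\mathscr{O})}+\radius\max_j\|\mu_j\|_{\mL^\infty(\mathscr{O})})$) guarantees that $\|\eta\|_{\mL^\infty(\mathscr{O})}$ falls below the threshold appearing in Proposition~\ref{PropoContinuity} (Case~1) or Proposition~\ref{PropoContinuity2} (Case~2), which then yields $\mathbb{S}(\rho_0+\eta)=\mathbb{S}(\rho_0)$, as required.

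The substantive mathematical content is already packaged into the surrounding propositions: the algebraic structure of $\mathbb{S}$ dictating which $F$ has an onto differential in which regime (Propositions~\ref{DifferentialUniversal} and \ref{DifferentialTNull}), and the unitarity/symmetry of $\mathbb{S}$ that converts $F(\rho)=0$ into $\mathbb{S}(\rho)=\mathbb{S}(\rho_0)$ (Propositions~\ref{PropoContinuity} and \ref{PropoContinuity2}). At the level of the present theorem, the only genuine care required is the compatibility of the two thresholds on $\eps$ coming from Theorem~\ref{the-ptfixe} and from Propositions~\ref{PropoContinuity}/\ref{PropoContinuity2}; this is harmless since both are upper thresholds and one may simply take their minimum.
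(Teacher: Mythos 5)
Your proposal is correct and follows essentially the same route as the paper, which proves this theorem precisely by combining Theorem~\ref{the-ptfixe} with Propositions~\ref{PropoDiffScaMat}, \ref{PropoContinuity}, \ref{DifferentialUniversal} (case $T_0\neq 0$) and \ref{PropoContinuity2}, \ref{DifferentialTNull} (case $T_0=0$); your two-case split on $T_0$ is exactly the dichotomy the paper relies on, and your remark about reconciling the two smallness thresholds on $\eps$ is the only point of care and is handled correctly.
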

\begin{remark}
Note that the statement of Theorem \ref{PerfectTrans1D} is contained in the result of Theorem \ref{RelativeInv1D}. It corresponds to the case where $\rho_0$ is such that $\mathbb{S}(\rho_0)=\mrm{Id}^{2\times2}$.
\end{remark}

\noindent The results of Theorems \ref{NonRef1D}, \ref{PerfectTrans1D} and \ref{RelativeInv1D} are still rather abstract. We will show in the next section how to choose the $\mu_0$, $\mu_i$ to construct non reflecting, perfectly invisible or relatively invisible obstacles.\\
Before proceeding further, let us explain why, unfortunately, we did not succeed in proving similar results of ontoness of $dF(\rho_0)$ in multimode regime. Due to the general expression (\ref{ExpressionDifferentialsMulti}) of the differentials of the scattering coefficients, similar proofs in multimode regime would require to generalize Lemma \ref{lem-Q(X,Y)} to the case where the quadratic form $Q$ has more than two arguments. But we conjecture that such a generalization does not hold.\\
However, for the invisibility in reflection as well as for the invisibility in reflection and transmission, we have proposed in \S\ref{paragraphProblems} a choice of functionals $F$ in multimode regime such that the most obvious reasons of non-ontoness of the differentials are eliminated. We will see that satisfactory results are indeed  obtained numerically with such $F$. For relative invisibility in multimode regime, the choice for $F$ still needs to be studied.

\section{Numerical examples}\label{SectionNumerics}

\subsection{General procedure}\label{paragraphGenProc}
Before presenting the numerical results, we explain how we obtain them. Let $\mathscr{O}\subset\Om=\R\times(0;1)$ be a non empty bounded open set which is given once for all. We assume that $\overline{\mathscr{O}}\subset(-\ell;\ell)\times(0;1)$ with $\ell=5$. We work with functional $F(\rho)=(F_i(\rho))_{i=1}^d$ defined as in the previous sections and valued in $\R^d$, $d\ge1$. For a given $\rho_0\in\mL^{\infty}(\mathscr{O})$, we want to construct $\rho\in\mL^{\infty}(\mathscr{O})$ with $\rho\not\equiv\rho_0$ such that $F(\rho)=F(\rho_0)$. We look for $\rho$ of the form $\rho=\rho_0+\eps\mu$ with $\eps$ small and with $\mu$ such that
\begin{equation}\label{DefMuGene}
\mu=\mu_0+\sum_{j=1}^d\tau_j\,\mu_j.
\end{equation}
Here the $\tau_j$ are real numbers to compute and the $\mu_j\in\mL^{\infty}(\mathscr{O})$ are such that
\begin{equation}\label{BasisFunctionsGene}
dF_i(\rho_0)(\mu_j)=\delta_{ij}.
\end{equation}
Again, we emphasize that the $\mu_j$, $j=0,\dots,d$, are well-defined when $dF(\rho_0):\mL^{\infty}(\mathscr{O})\to\R^d$ is onto. However clearly they are not uniquely defined. Let us explain how we choose them in the numerical procedure. According to the results of \S\ref{ParagraphDiff} and in particular (\ref{ExpressionDifferentialsMulti}), for the $F$ considered in the previous sections, we have for all $\mu\in\mrm{L}^{\infty}(\mathscr{O})$,
\begin{equation}\label{diffFGene}
dF_i(\rho_0)(\mu)=\int_{\Om}\mu f_i\,dz
\end{equation}
for certain functions $f_i\in\mathscr{C}^0(\overline{\mathscr{O}})$ (involving the $u_m^{\pm}$ introduced in (\ref{scatteredFieldParticular})). Define the Gram matrix
\[
\G:=\left(\int_{\mathscr{O}} f_if_j\,dz \right)_{1 \le i,j\le d}.
\]
Denote $\mathbb{H}=(\mathbb{H}_{ij})_{1 \le i,j\le d}$ the inverse of $\G$ which is well-defined when $dF(\rho_0):\mL^{\infty}(\mathscr{O})\to\R^d$ is onto (this is what we used in the proof of Lemma \ref{lemmaGram}). Finally, set
\[
\mu_j=\sum_{i=1}^d\mathbb{H}_{ji}f_i.
\]
Then from a $\mu_0^{\#}$ such that $\mu_0^{\#}\notin \mrm{span}(\mu_1,\dots,\mu_d)$, we define $\mu_0$  by 
\[
\mu_0:=\mu_0^{\#}-\sum_{j=1}^ddF_j(\rho_0)(\mu_0^{\#})\,\mu_j.
\]
One can verify that with such definitions, the functions $\mu_0$, $\mu_1,\dots,\mu_d$ satisfy (\ref{BasisFunctionsGene}).
\begin{remark}
For the simplest problem of invisibility in reflection in monomode regime, that is when $0<k<\pi$ ($N=1$), we saw in \S\ref{ParagRNullMono} that we can take $F(\rho)=(\Re e\,R^+(\rho),\Im m\,R^+(\rho))\in\R^2$. Then from (\ref{DiffSca1D}), we know that for all $\mu\in\mrm{L}^{\infty}(\mathscr{O})$, we have
\[
dR^+(\rho)(\mu)=ik^2\dsp\int_{\Om}\mu (u^+)^2\,dz.
\]
In this situation, the $f_i$ introduced in (\ref{diffFGene}) are simply given by $f_1=\Re e\,(ik^2(u^+)^2$) and $f_2=\Im m\,(ik^2(u^+)^2)$. 
\end{remark}
\noindent For $\mu$ as in (\ref{DefMuGene}), we have the expansion 
\[
\begin{array}{lcl}
F(\rho_0+\eps\mu )&=&F(\rho_0)+\eps dF(\rho_0)(\mu)+\eps^2\widetilde{F}^{\eps}(\tau)\\[5pt]
&=&F(\rho_0)+\eps \tau+\eps^2\widetilde{F}^{\eps}(\tau),
\end{array}
\]
where $\widetilde{F}^{\eps}(\tau)\in\R^d$ is an abstract remainder and where $\tau:=(\tau_1,\dots,\tau_d)\in\R^d$. Thus, to impose $F(\rho_0+\eps\mu )=F(\rho_0)$, we see that $\tau$ must verify the fixed point equation
\begin{equation}\label{FixedPointEqnGene}
\begin{array}{|l}
\mbox{Find }\tau\in\R^d\\
\tau=\mathscr{H}^{\eps}(\tau)
\end{array}\qquad\mbox{ with }\qquad  \mathscr{H}^{\eps}(\tau):=-\eps\widetilde{F}^{\eps}(\tau).
\end{equation}
Numerically, we solve (\ref{FixedPointEqnGene}) using an iterative procedure. We start from $\tau^{0}=0$ and for all $p\in\N$, we set $\tau^{p+1}=\mathscr{H}^{\eps}(\tau^{p})$. Using the definition of $\mathscr{H}^{\eps}$, one observes that there holds $\mathscr{H}^{\eps}(\tau^{p})=\tau^{p}+\eps^{-1}(F(\rho_0)-F(\rho_0+\eps\mu^p))$ with $\mu^p$ defined as in (\ref{DefMuGene}) with $\tau$ replaced by $\tau^{p}$. Therefore, we have
\begin{equation}\label{IterativeProc}
\tau^{p+1}=\tau^{p}+\eps^{-1}(F(\rho_0)-F(\rho_0+\eps\mu^p)).
\end{equation}
We stop the loop when we have $|\tau^{p+1}-\tau^{p}| \le \eta$ where $\eta>0$ is a small given criterion. We then define $\tau^{\mrm{sol}}$ as the last value of $\tau^{p}$. Then we have $F(\rho_0+\eps\mu^{\mrm{sol}})\approx F(\rho_0)$. If the iterative process does not converge, we try again with a smaller value of $\eps>0$. Note that at each step $j\ge0$ of the procedure, we need to solve scattering problems of the form
\begin{equation}\label{PbNum}
\begin{array}{|rl}
\multicolumn{2}{|l}{\mbox{Find }u\in\mrm{H}^1_{\loc}(\Om) \mbox{ such that }u-u_i\mbox{ is outgoing and } }\\[3pt]
\Delta u + k^2(1+\rho_0+\eps\mu^p) u = 0 & \mbox{ in }\Om \\[3pt]
\partial_{y} u=0 & \mbox{ on }\partial\Om.
\end{array}
\end{equation}
To proceed, we approximate the solution of (\ref{PbNum}) with a P2 finite element method in $\Om_5:=\{(x,y)\in\Om\,|\,|x|<5\}$. At $x=\pm 5$, a truncated Dirichlet-to-Neumann map with 10 terms serves as a transparent boundary condition. Computations are implemented with \textit{FreeFem++}\footnote{\textit{FreeFem++}, \url{http://www.freefem.org/ff++/}.} as well as with \textit{XLiFE++}\footnote{\textit{XLiFE++}, \url{https://uma.ensta-paris.fr/soft/XLiFE++/}.} while results are displayed with \textit{Paraview}\footnote{\textit{Paraview}, \url{http://www.paraview.org/}.}.\\
\newline
Once $\rho_1=\rho_0+\eps\mu^{\mrm{sol}}$ has been constructed such that $F(\rho_1)=F(\rho_0)$, one can perturb it to construct another $\rho_2\in\mL^{\infty}(\mathscr{O})$ such that $F(\rho_2)=F(\rho_1)=F(\rho_0)$. We denote by $\aleph$ the number of times we repeat the perturbative construction.
\begin{remark}
One can remark that the method also works when $\mu_0$ in the decomposition (\ref{DefMuGene}) is not chosen in $\ker\,dF(\rho_0)$. In this case, we have 
\[
F(\rho_0+\eps\mu )=F(\rho_0)\qquad\Leftrightarrow\qquad \tau= \tilde{\mathscr{H}}^{\eps}(\tau)
\]
with $\tilde{\mathscr{H}}^{\eps}(\tau):=-dF(\rho_0)(\mu_0)-\eps\widetilde{F}^{\eps}(\tau)$. One can verify that the Banach fixed point theorem guarantees that the above fixed point problem admits a unique solution. Numerically, one can check that it leads to solve exactly the same iterative problem as in \eqref{IterativeProc}.
\end{remark}

\subsection{Results}

\noindent\textit{i)} First, in Figures \ref{rhoSimu1}--\ref{rhoSimu1FieldsR} we impose invisibility in reflection. We work with $k=0.8\pi\in(0;\pi)$ so that only one mode can propagate. In this case, we just have to cancel one complex reflection coefficient, this is the setting of Theorem \ref{NonRef1D}. The obstacle has a rectangular shape. In Figure \ref{rhoSimu1}, we display the sequence of non reflecting obstacles. Here we repeated three times the perturbative construction ($\aleph=3$). Note that the amplitude of the perturbation increases at each step. And we could have continued the process working with a larger $\aleph$. Each of the non reflecting $\rho$ has been obtained by solving the fixed point problem (\ref{FixedPointEqnGene}) via the iterative procedure (\ref{IterativeProc}). The Figure \ref{rhoSimu1FieldsR} represents the real part of the total and scattered fields for the last $\rho$ of Figure \ref{rhoSimu1}. As expected, we observe that the scattered field is exponentially decaying as $x\to-\infty$. And there is a shift of phase in the transmission. This is normal because we cancel only the reflection coefficient.

\begin{figure}[!ht]
\centering
\includegraphics[width=5.2cm]{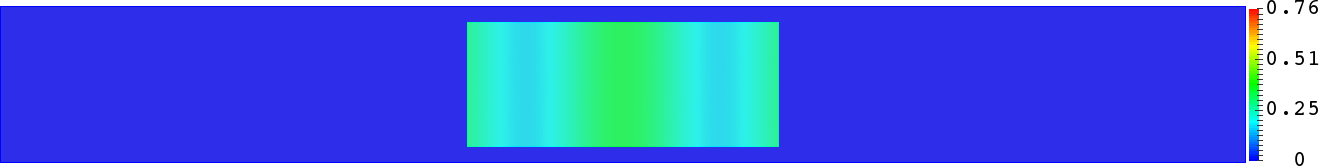}\quad \includegraphics[width=5.2cm]{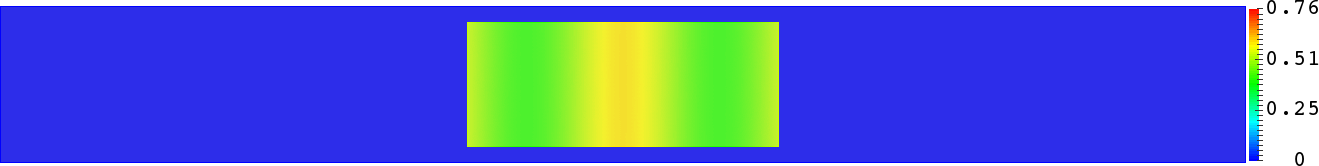}\quad \includegraphics[width=5.2cm]{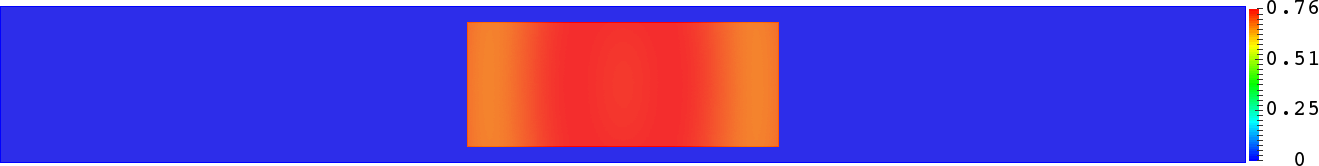}
\caption{Evolution of the parameter $\rho$. We emphasize that for each image, the represented $\rho$ is non reflecting and has been obtained solving the fixed point problem (\ref{FixedPointEqnGene}). \label{rhoSimu1}}
\end{figure}

\begin{figure}[!ht]
\centering
\includegraphics[width=8cm]{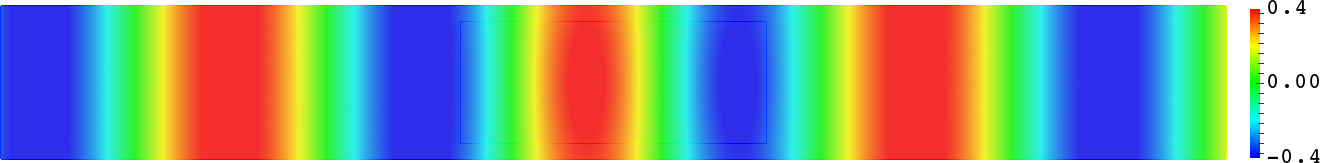}\quad\includegraphics[width=8cm]{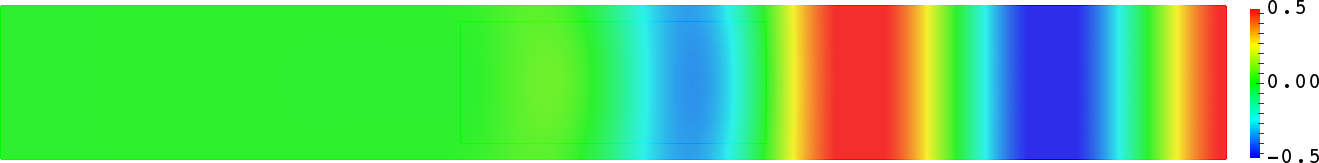}
\caption{Real parts of the total field $u^+$ (left) and of the scattered field $u^+-w^+$ (right) for the last $\rho$ of Figure \ref{rhoSimu1}.\label{rhoSimu1FieldsR}}
\end{figure}

\noindent\textit{ii)} In Figures \ref{rhoSimu1T}--\ref{rhoSimu1FieldsT}, we impose invisibility in reflection and transmission. We work with $k=0.8\pi\in(0;\pi)$ so that the setting is the one of Theorem \ref{PerfectTrans1D}. Following the statement of this theorem, we cancel $R^+$ as well as $\Im m\,T$. The obstacle has the same rectangular shape as in the previous series of experiments. In Figure \ref{rhoSimu1T}, we display the sequence of perfectly invisible obstacles (again we take $\aleph=3$ to set ideas). And the Figure \ref{rhoSimu1FieldsT} represents the real part of the total and scattered fields for the last $\rho$ of Figure \ref{rhoSimu1T}. As desired, this time the scattered field is exponentially decaying both at minus and plus infinity.

\begin{figure}[!ht]
\centering
\includegraphics[width=5.2cm]{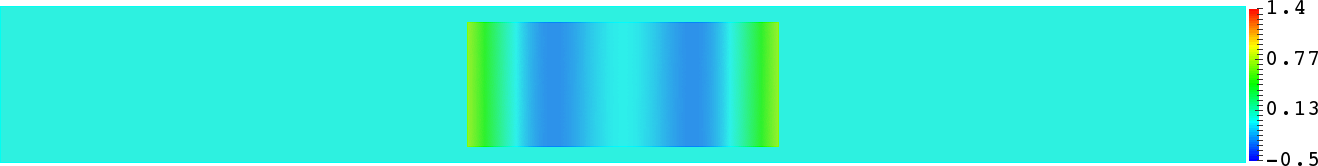}\quad\includegraphics[width=5.2cm]{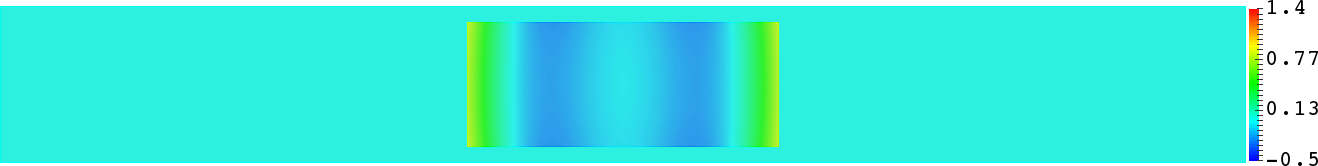}\quad\includegraphics[width=5.2cm]{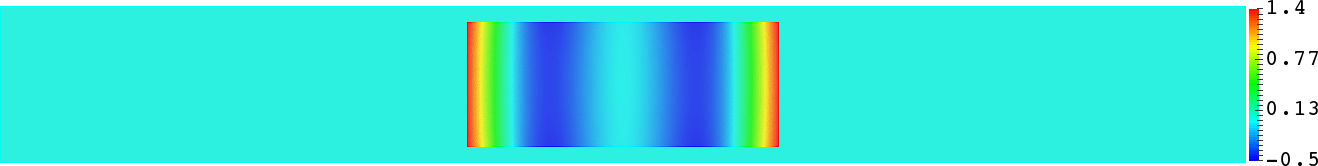}
\caption{Evolution of the perfectly invisible parameter $\rho$. \label{rhoSimu1T}}
\end{figure}
\begin{figure}[!ht]
\centering
\includegraphics[width=8cm]{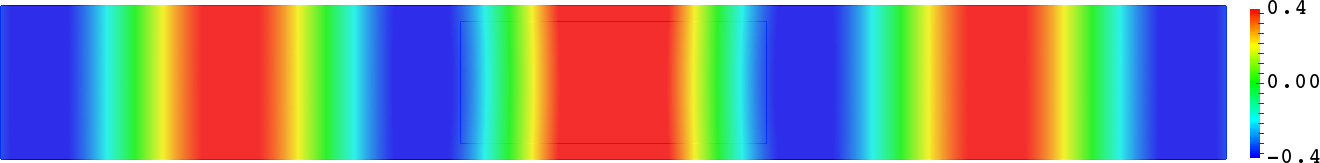}\quad\includegraphics[width=8cm]{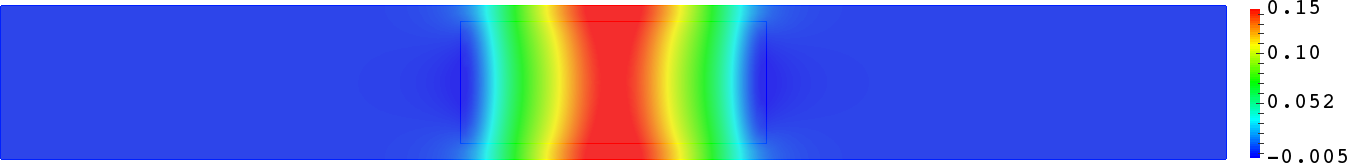}
\caption{Real parts of the total field $u^+$ (left) and of the scattered field $u^+-w^+$ (right) for the last $\rho$ of Figure \ref{rhoSimu1T}.\label{rhoSimu1FieldsT}}
\end{figure}

\noindent\textit{iii)} In Figures \ref{SimuRel1} and \ref{SimuRel2}, we impose relative invisibility, \textit{i.e.} we exhibit two different indices $\rho$ for which the corresponding scattering matrices are the same.  We work with $k=0.8\pi\in(0;\pi)$, that is again in monomode regime. In this case, the scattering matrix is of size $2\times2$ and the setting is the one of Theorem \ref{RelativeInv1D}. For the initial $\rho$ of Figure \ref{SimuRel1} (see the top left picture), we find  $T\approx 0.82+0.52i$. Since $\Re e\,T\ne0$, we work with the functional $F$ such that $F(\rho)=( \Re e\,R^+, \Im m\,R^+,\Im m\,T)$. In the last line of Figure \ref{SimuRel1}, as expected, we observe that the scattered field by the two indices have the same behaviour at infinity. For the Figure \ref{SimuRel2}, the initial $\rho$ is equal to zero (see the top left picture). In other words, there is no penetrable obstacle. But there is a defect in the wall of the waveguide and this defect has been chosen so that the transmission coefficient is null (see \cite{ChNPSu} for the explanation). This can be seen on the first picture of the second line which represents the total field for an incident wave coming from the left. We indeed note that it is exponentially decaying as $x\to+\infty$. The top right picture represents another setting, with a penetrable obstacle which has been designed so that the scattering matrix remains the same. It has been obtained working with the specific functional $F$ defined in (\ref{UniversalFunctionalTNull}). We emphasize again that the choice of the functional is crucial when imposing relative invisibility.

\begin{figure}[!ht]
\centering
\includegraphics[width=8cm]{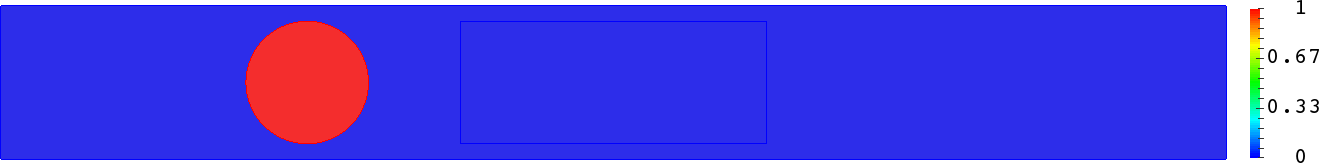}\quad\includegraphics[width=8cm]{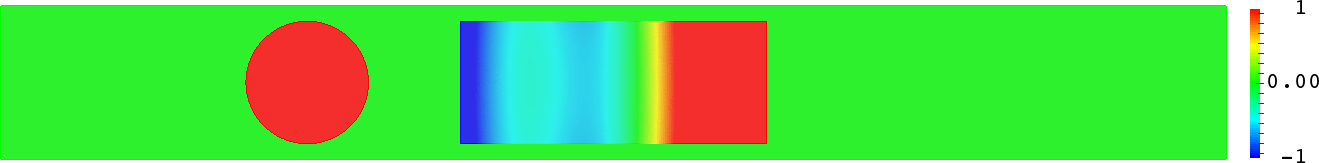}\\[4pt]
\includegraphics[width=8cm]{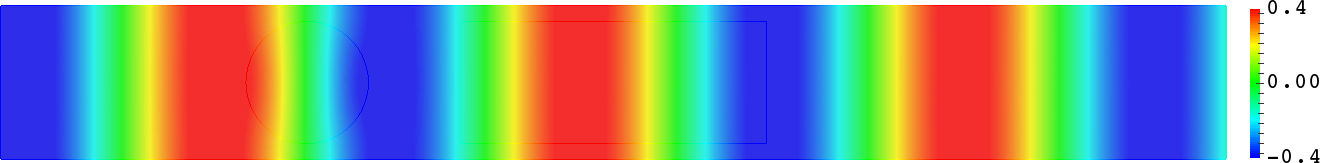}\quad
\includegraphics[width=8cm]{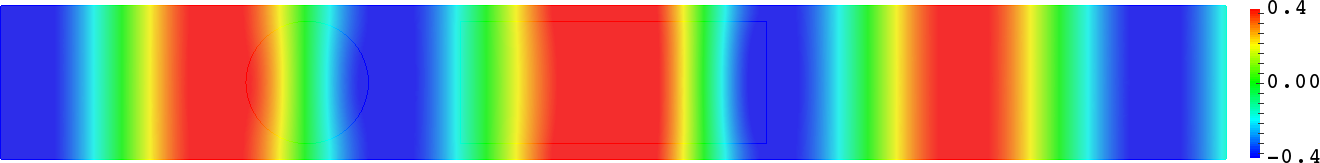}\\[4pt]
\hspace{0.05cm}\includegraphics[width=8.1cm]{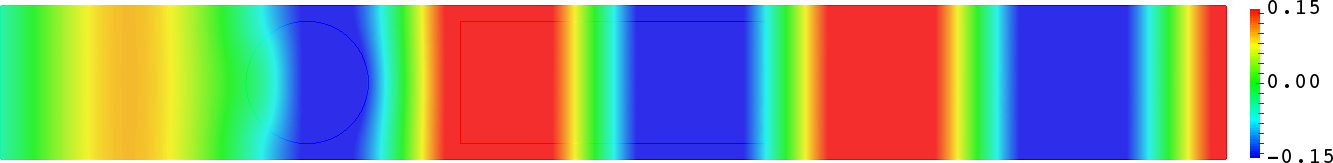}\,\,
\includegraphics[width=8.1cm]{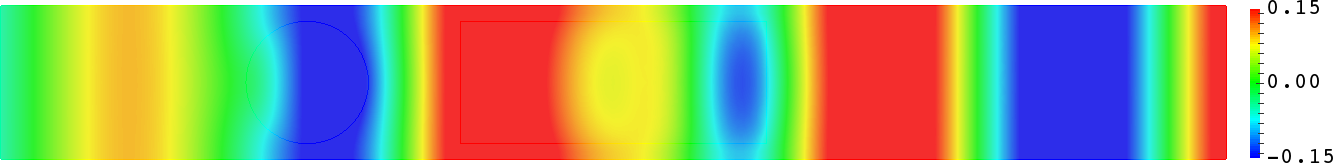}
\caption{Two different indices $\rho$ with the same scattering matrices. The first line represents the two indices. In the second (resp. third) line, we display the real part of the total field $u^+$ (resp. scattered field $u^+-w^+$) for each of the two indices. \label{SimuRel1}}
\end{figure}

\begin{figure}[!ht]
\centering
\includegraphics[width=8cm]{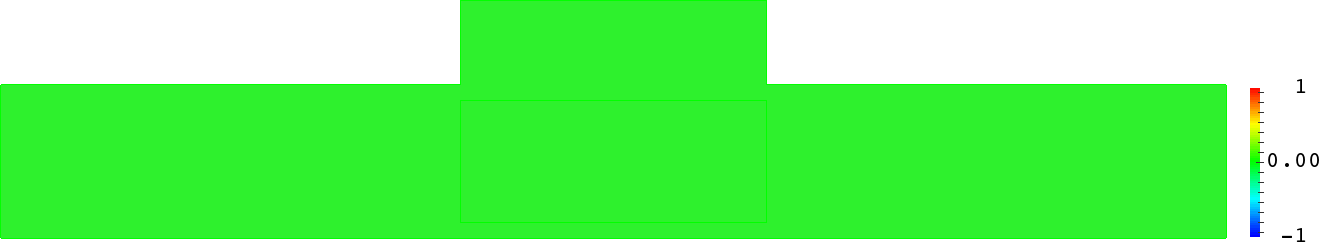}\quad\includegraphics[width=8cm]{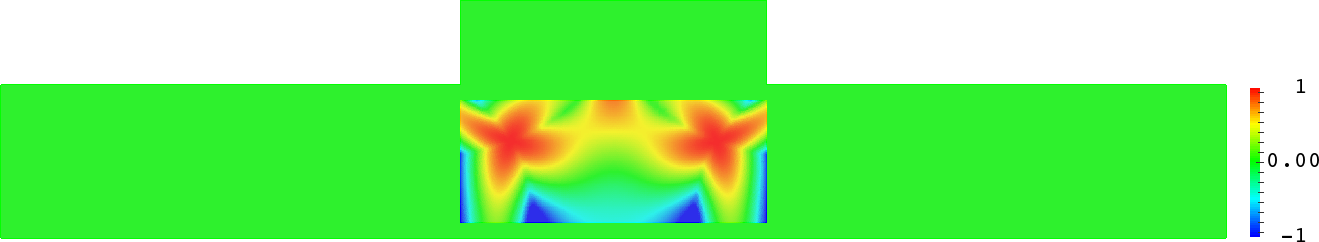}\\[4pt]
\includegraphics[width=8cm]{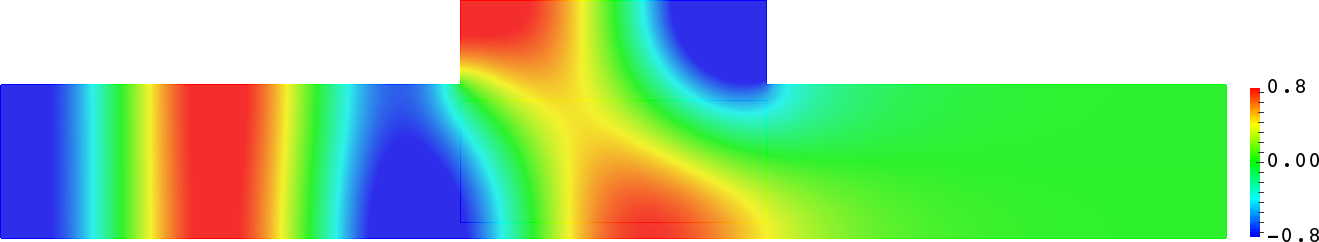}\quad
\includegraphics[width=8cm]{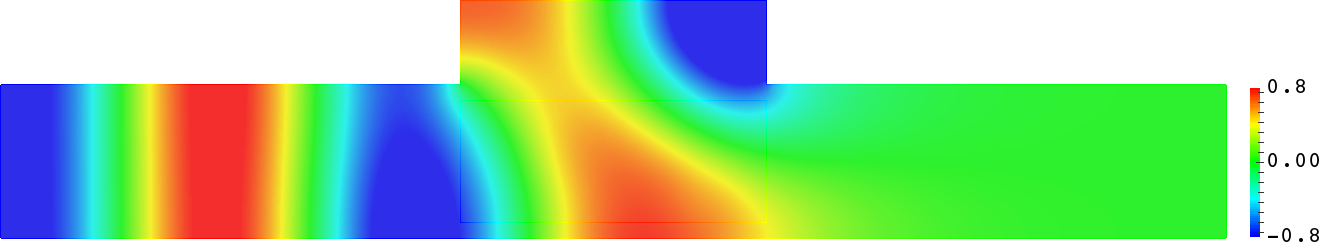}\\[4pt]
\hspace{0.05cm}\includegraphics[width=8.1cm]{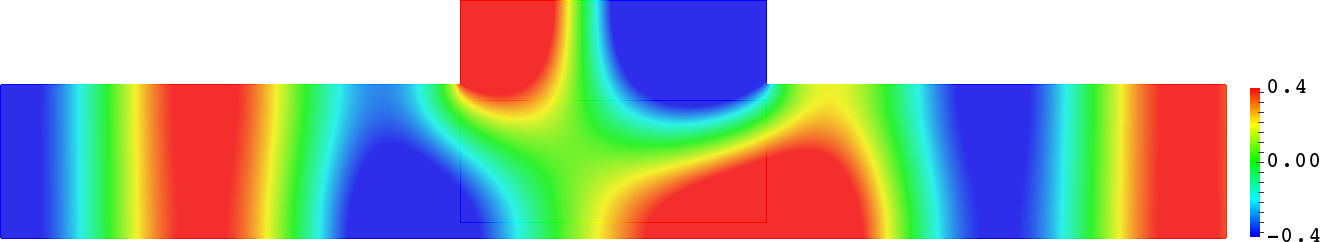}\,\,
\includegraphics[width=8.1cm]{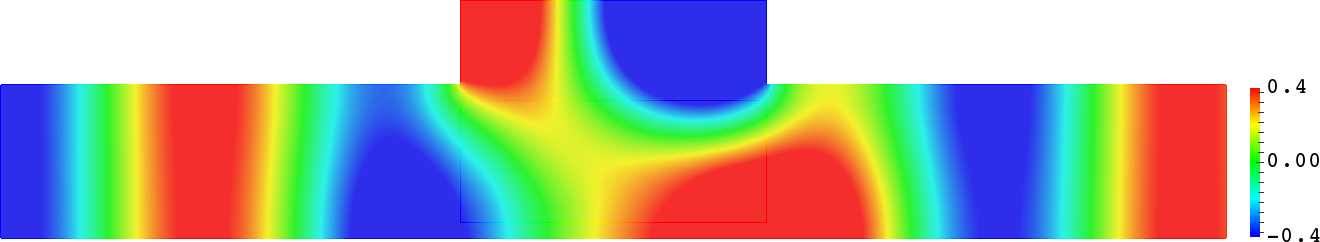}
\caption{Two different indices $\rho$ with the same scattering matrices. The first line represents the two indices. In the second (resp. third) line, we display the real part of the total field $u^+$ (resp. scattered field $u^+-w^+$) for each of the two indices. \label{SimuRel2}}
\end{figure}

\noindent\textit{iv)} In Figures \ref{SuiterhoSimu2}--\ref{ChampSimu2}, we impose invisibility in reflection but this time for $k=7\in (2\pi;3\pi)$. In this case, three modes can propagate in the waveguide and we have to cancel 6 complex terms (because $\mathbb{S}$ is symmetric). We work with the $F$ defined in (\ref{NoReflection}). We emphasize that for this $F$, we do not have a proof of ontoness of the differential. However we can still implement the method and numerically we have not noticed particular obstruction. In Figure \ref{SuiterhoSimu2}, we display the sequence of non reflecting obstacles.  
Though constraints are quite numerous, reiterating nine times the fixed point algorithm, we observe that we can get non reflecting $\rho$ with a relatively high contrast (see the last picture of Figure \ref{SuiterhoSimu2}). In Figure \ref{ChampSimu2}, we display the real part of the total and scattered fields of the three modes for the last $\rho$ of Figure \ref{SuiterhoSimu2}. As expected, we observe that the scattered fields are exponentially decaying for $x\to-\infty$.

\begin{figure}[!ht]
\centering
\includegraphics[width=5.2cm]{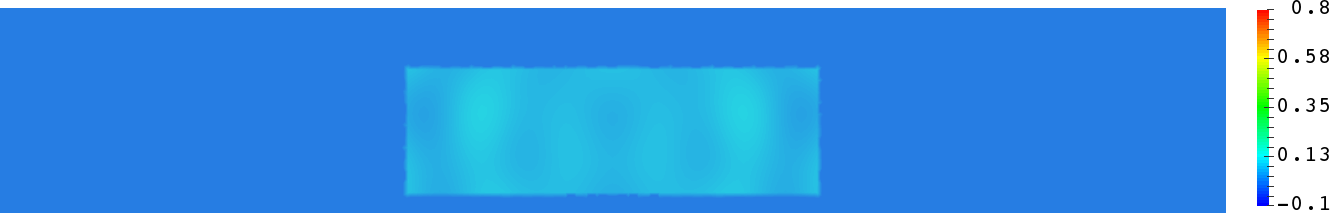}\quad\includegraphics[width=5.2cm]{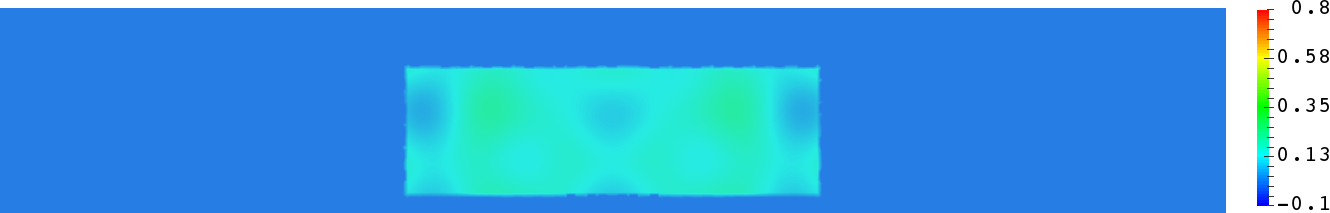}\quad\includegraphics[width=5.2cm]{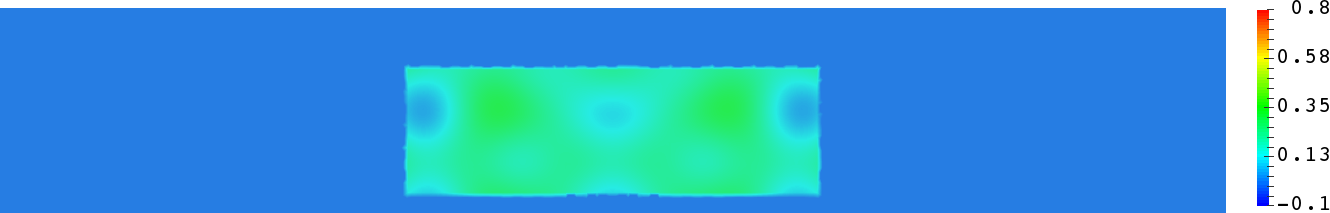}\\
\includegraphics[width=5.2cm]{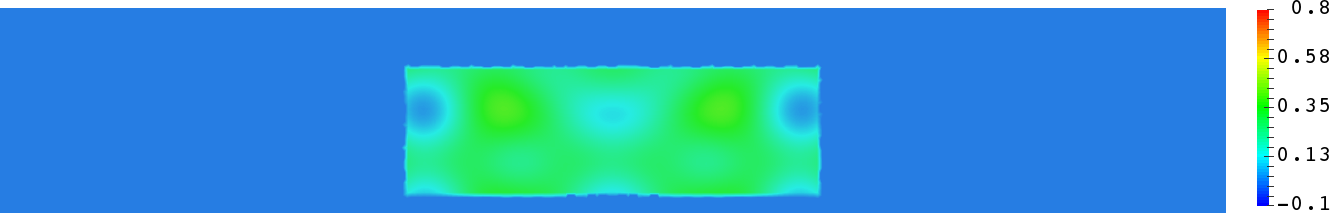}\quad\includegraphics[width=5.2cm]{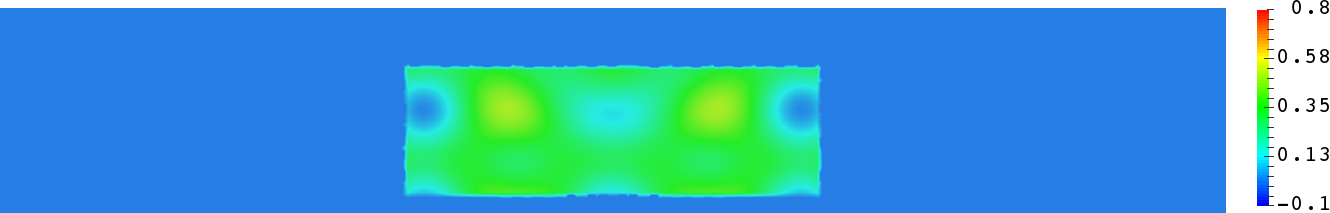}\quad\includegraphics[width=5.2cm]{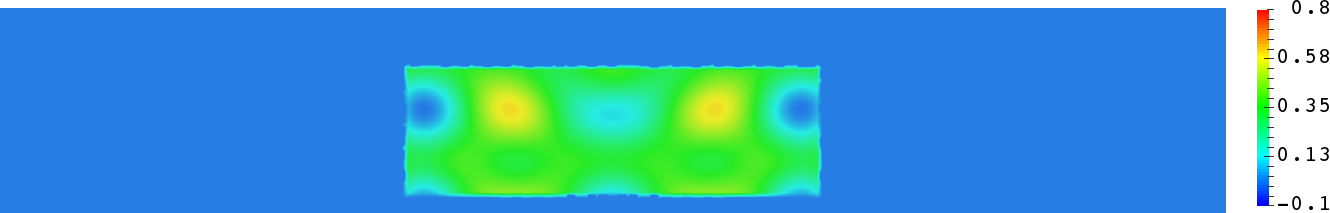}\\
\includegraphics[width=5.2cm]{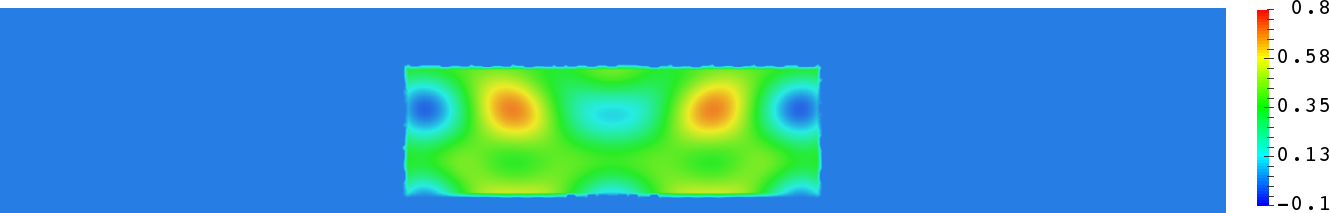}\quad\includegraphics[width=5.2cm]{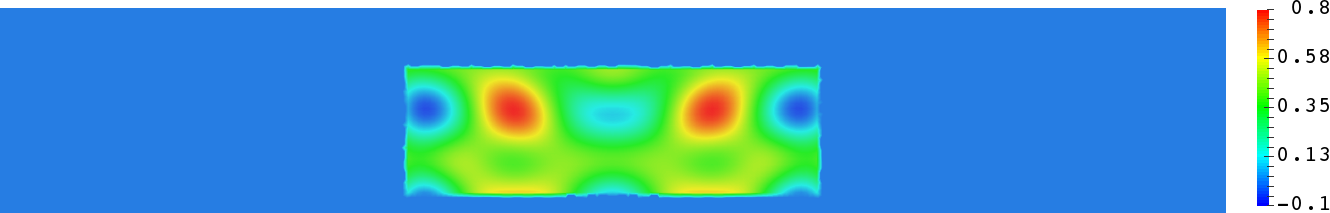}\quad\includegraphics[width=5.2cm]{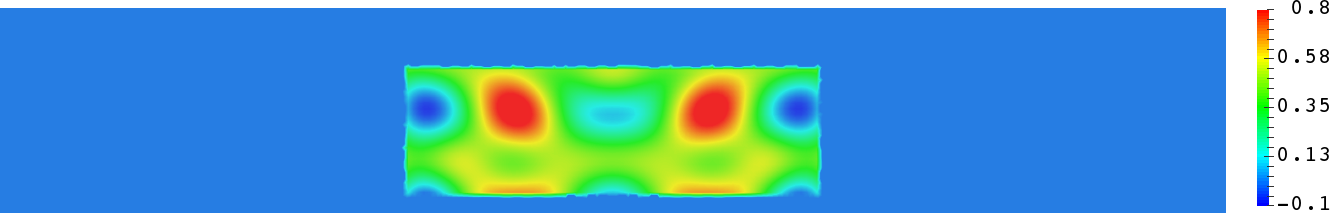}\\
\caption{Evolution of the non reflecting parameter $\rho$.  \label{SuiterhoSimu2}}
\end{figure}
\begin{figure}[!ht]
\centering
\begin{tabular}{lcc}
\raisebox{3mm}{\small\mbox{Mode 0}}&\includegraphics[width=7cm]{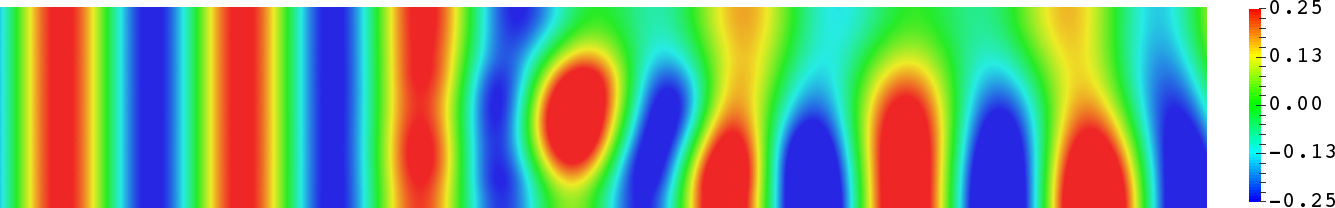}& \includegraphics[width=7cm]{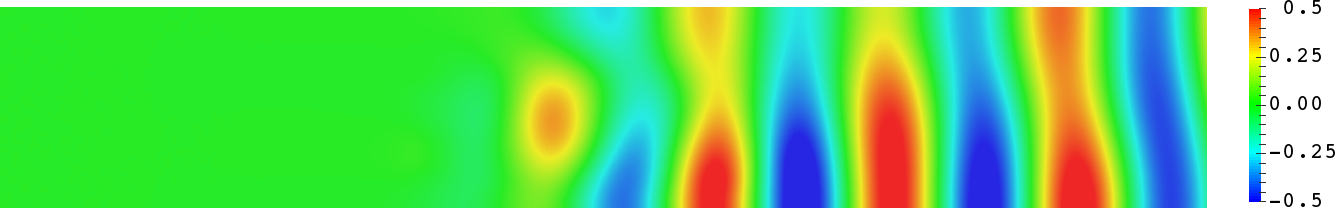}\\
\raisebox{3mm}{\small\mbox{Mode 1}}&\includegraphics[width=7cm]{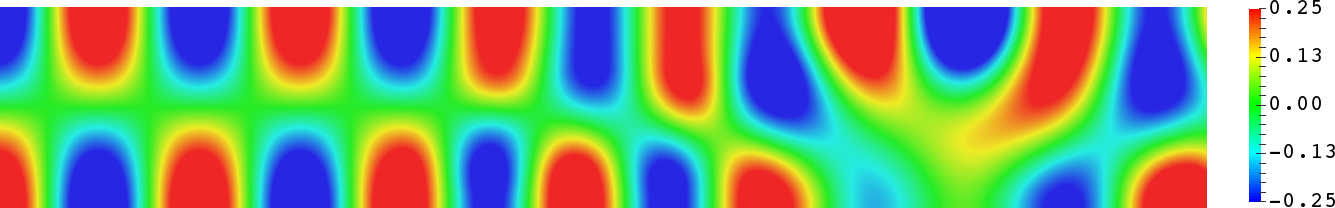}&  \includegraphics[width=7cm]{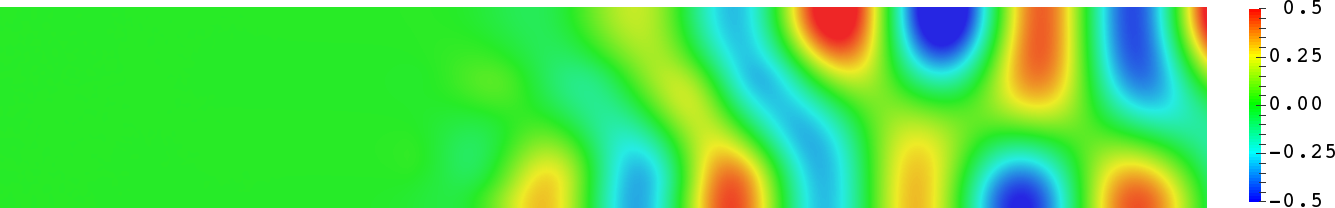}\\
\raisebox{3mm}{\small\mbox{Mode 2}} &\includegraphics[width=7cm]{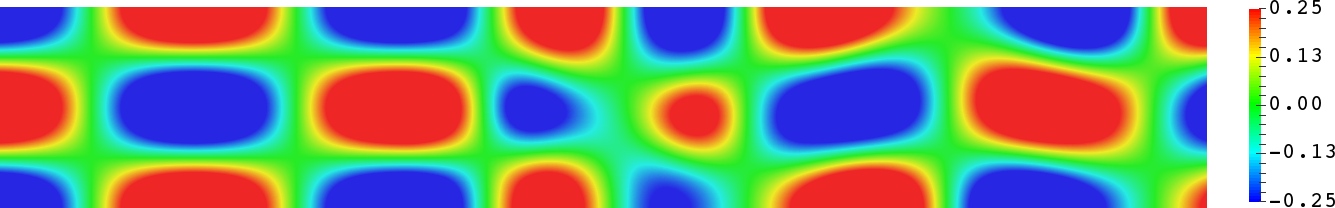}&  \includegraphics[width=7cm]{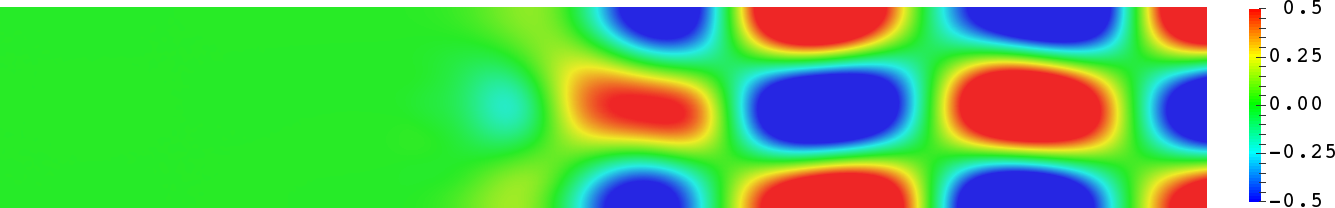}
\end{tabular}
\caption{The line $j$, $j=1,2,3$, represents the real parts of the total field $u_{j-1}^+$ (left) and of the scattered field $u_{j-1}^+-w^+_j$ (right) for the last $\rho$ of Figure \ref{SuiterhoSimu2}.\label{ChampSimu2}}
\end{figure}

\newpage
\noindent\textit{v)} In Figures \ref{rhoSimu3}--\ref{ChamprhoSimu3}, we impose invisibility in reflection for $k=4\in(\pi;2\pi)$. In this case two modes can propagate. Again we work with the $F$ defined in (\ref{NoReflection}) and we do not have a proof of ontoness of the differential. This times, the support of the non reflecting $\rho$ is the union of a rectangle and an ellipse. The choice of the support of the obstacle is not important and does not affect the method.

\begin{figure}[!ht]
\centering
\includegraphics[width=8cm]{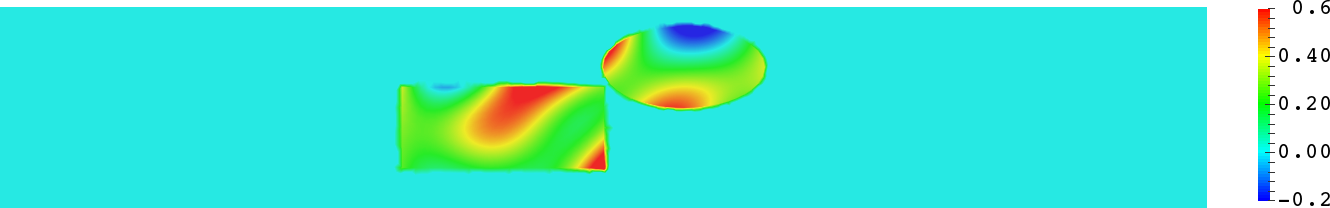}
\caption{Non reflecting parameter $\rho$. \label{rhoSimu3}}
\end{figure}
\begin{figure}[!ht]
\centering
\begin{tabular}{lcc}
\raisebox{3mm}{\small\mbox{Mode 0}}&\includegraphics[width=7cm]{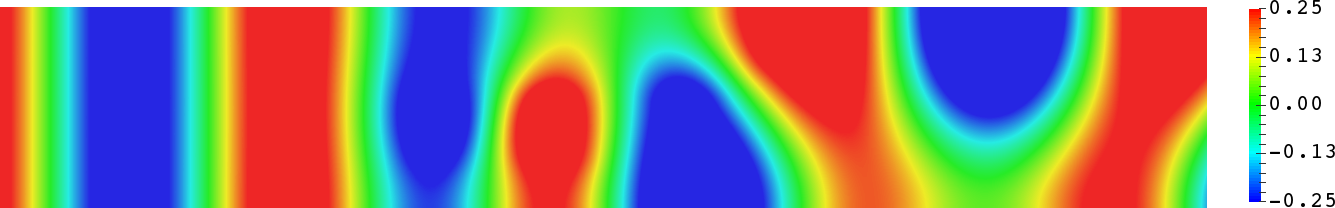}& \includegraphics[width=7cm]{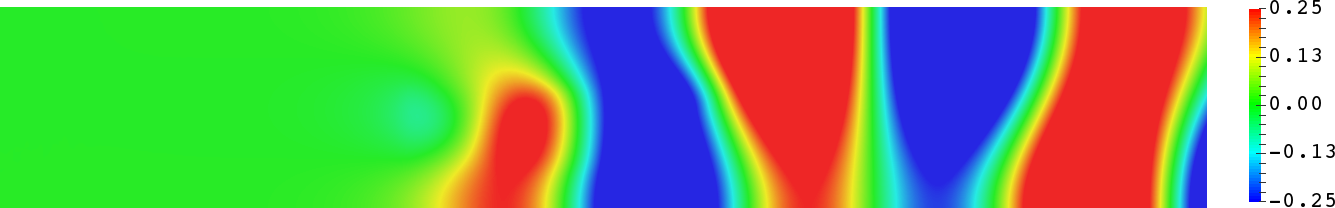}\\
\raisebox{3mm}{\small\mbox{Mode 1}}&\includegraphics[width=7cm]{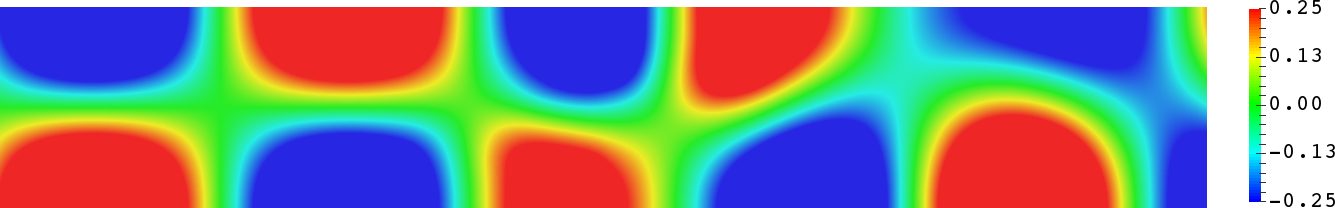}& \includegraphics[width=7cm]{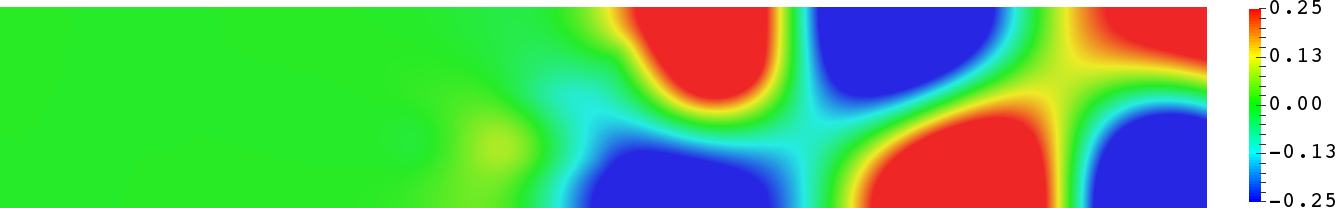}
\end{tabular}
\caption{The line $j$, $j=1,2$, represents the real parts of the total field $u_{j-1}^+$ (left) and of the scattered field $u_{j-1}^+-w^+_j$ (right) for the $\rho$ of Figure \ref{rhoSimu3}. \label{ChamprhoSimu3}}
\end{figure}

\section{Additional constraints}\label{SectionConstraints}
\subsection{General procedure}
The invisible or relatively invisible obstacles we constructed in the previous section can have some quite varying $\rho$ which can be hard to produce in practice. On the other hand, we saw with (\ref{BasisFunctionsGene}) that we have some freedom to construct the invisible perturbations. In this section, we explain how to design simpler invisible or relatively invisible $\rho$. Assume that we have a certain partition of the obstacle, \textit{i.e.} assume that we have
\[
\overline{\mathscr{O}}=\bigcup_{s=1}^{S}\overline{\mathscr{O}_{s}}
\]
where the $\mathscr{O}_{s}$ are non empty open sets such that $\mathscr{O}_{s}\cap\mathscr{O}_{s'}=\emptyset$ when $s\ne s'$. We will look for $\rho$ such that $F(\rho)=F(\rho_0)$ which are piecewise constant in the $\mathscr{O}_{s}$. Define the indicator function $\psi_s$ such that 
\[
\psi_s=\begin{array}{|ll}
1 & \mbox{ in }\mathscr{O}_{s}\\
0 & \mbox{ in }\mathscr{O}_{s'}\mbox{ for }s'\ne s
\end{array}
\] 
and set $\mX:=\mrm{span}(\psi_1,\dots,\psi_S)$. For $i=1,\dots,d$, denote by $\hat{f}_i$ the projection of the $f_i$ introduced in (\ref{diffFGene}) on the space $\mX$ for the inner product of $\mL^2(\mathscr{O})$. The $\hat{f}_i$ are such that
\[
\int_{\mathscr{O}}\hat{f}_i \psi_s\,dz=\int_{\mathscr{O}}f_i \psi_s\,dz,\qquad \forall s=1,\dots,S.
\]
Define the new Gram matrix
\[
\hat{\G}:=\left(\int_{\mathscr{O}} \hat{f}_i\hat{f}_j\,dz \right)_{1 \le i,j\le d}.
\]
Denote $\hat{\mathbb{H}}=(\hat{\mathbb{H}}_{ij})_{1 \le i,j\le d}$ the inverse of $\hat{\G}$ assuming that it exists. Note that $\hat{\G}$ is invertible if and only if $\{\hat{f}_1,\dots,\hat{f}_d\}$ is a family of linearly independent functions which is not guaranteed even when $dF(\rho_0):\mL^{\infty}(\mathscr{O})\to\R^d$ is onto. Observe that a necessary condition so that this holds true is that $S\ge d$: the number of elements in the partition of $\mathscr{O}$ must be larger than the number of constraints to satisfy. Finally, we set
\[
\hat{\mu}_j=\sum_{i=1}^d\hat{\mathbb{H}}_{ji}\hat{f}_i.
\]
Then from a $\hat{\mu}_0^{\#}\in\mX$ such that $\hat{\mu}_0^{\#}\notin \mrm{span}(\hat{\mu}_1,\dots,\hat{\mu}_d)$, we define $\hat{\mu}_0$  by 
\[
\hat{\mu}_0:=\hat{\mu}_0^{\#}-\sum_{j=1}^ddF_j(\rho_0)(\hat{\mu}_0^{\#})\,\hat{\mu}_j.
\]
With such definitions, the functions $\hat{\mu}_0$, $\hat{\mu}_1,\dots,\hat{\mu}_d$ satisfy (\ref{BasisFunctionsGene}). Then the rest of the algorithm is as described at the end of \S\ref{paragraphGenProc}.

\subsection{Results}

Let us present two series of experiments to show the workability of the algorithm. In Figures \ref{rhoSimu6}--\ref{ChamprhoSimu6}, we construct piecewise constant perfectly invisible parameters $\rho$. We work with $k=0.8\pi\in(0;\pi)$. Since we cancel both $R^+$ and $\Im m\,T$, we need to work with at least three inclusions. In Figures \ref{rhoSimu5}--\ref{ChamprhoSimu5}, we construct piecewise constant non reflecting $\rho$ with $k=7\in(2\pi;3\pi)$. In this case, three modes can propagate in the waveguide. Since we have to cancel 6 complex coefficients, that is 12 real coefficients, we need to have at least 12 parameters to tune. This is why we work with 30 penetrable circular inclusions. We emphasize that due to the additional constraints imposed to the index $\rho$, we have no proof of ontoness of the differential of the functionals. As a consequence, we can not justify the result of existence of non reflecting $\rho$ satisfying the constraints. However for the cases we have considered numerically, the algorithm has worked very reasonably and we have not observed any particular obstruction.

\begin{figure}[!ht]
\centering
\includegraphics[width=8cm]{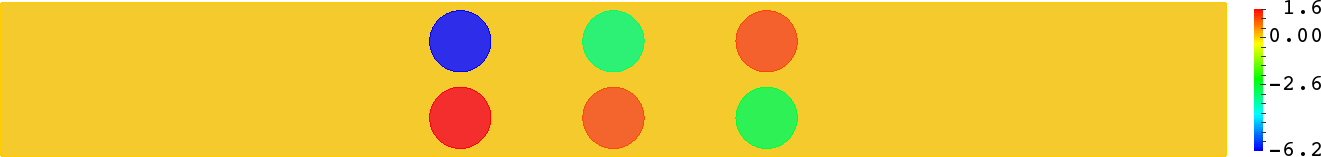}
\caption{Piecewise constant perfectly invisible parameter $\rho$. \label{rhoSimu6}}
\end{figure}
\begin{figure}[!ht]
\centering
\includegraphics[width=7cm]{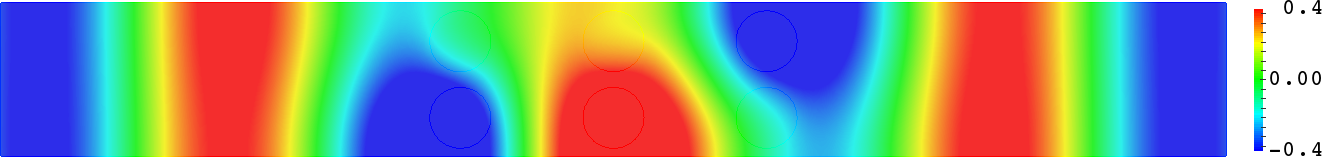}\qquad\includegraphics[width=7cm]{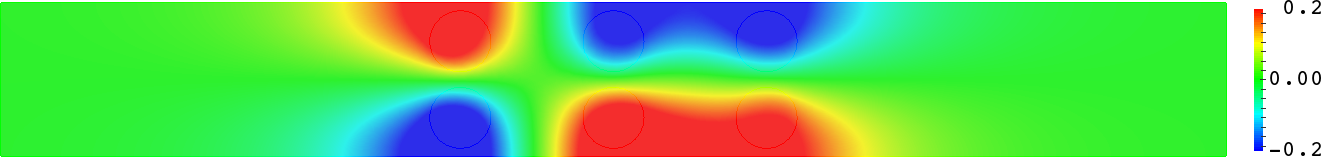}
\caption{Real parts of the total field $u^+$ (left) and of the scattered field $u^+-w^+$ (right) for the $\rho$ of Figure \ref{rhoSimu6}.\label{ChamprhoSimu6}}
\end{figure}

\begin{figure}[!ht]
\centering
\includegraphics[width=8cm]{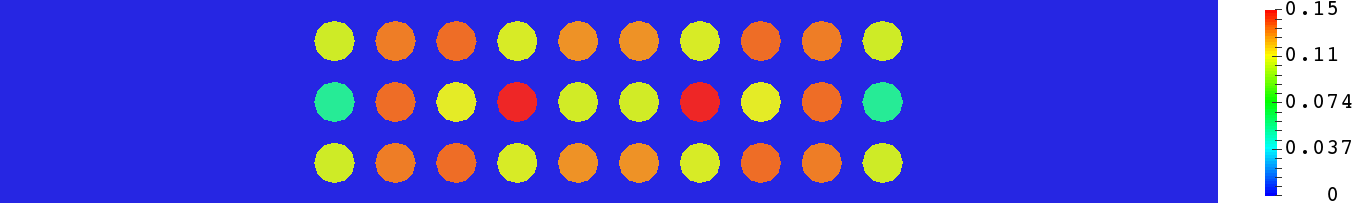}
\caption{Piecewise constant non reflecting parameter $\rho$. \label{rhoSimu5}}
\end{figure}
\begin{figure}[!ht]
\centering
\begin{tabular}{lcc}
\raisebox{3mm}{\small\mbox{Mode 0}}&\includegraphics[width=7cm]{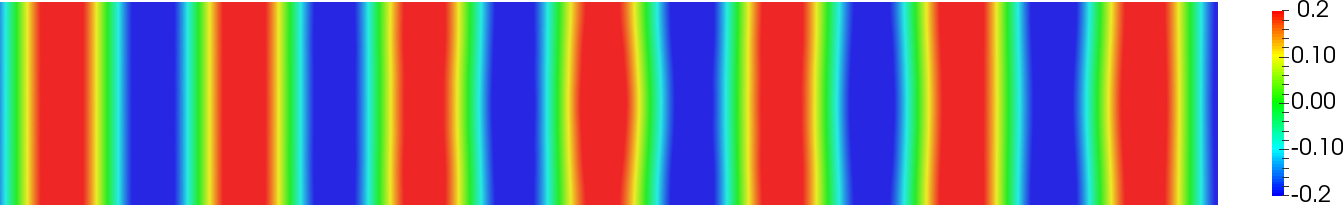}& \includegraphics[width=7cm]{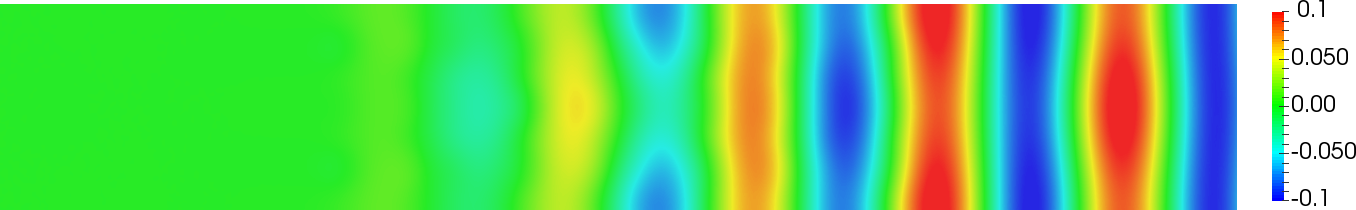}\\
\raisebox{3mm}{\small\mbox{Mode 1}}&\includegraphics[width=7cm]{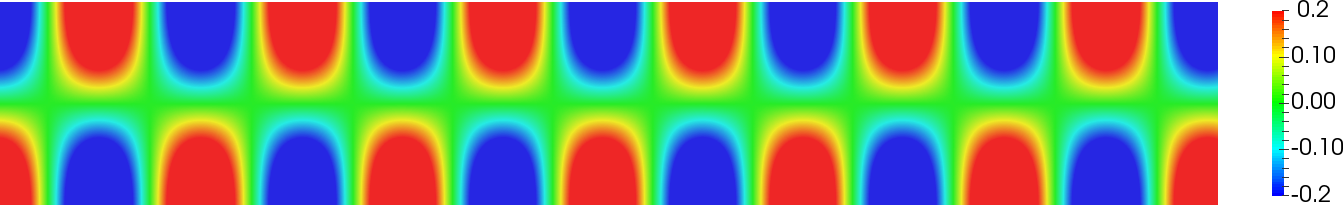}& \includegraphics[width=7cm]{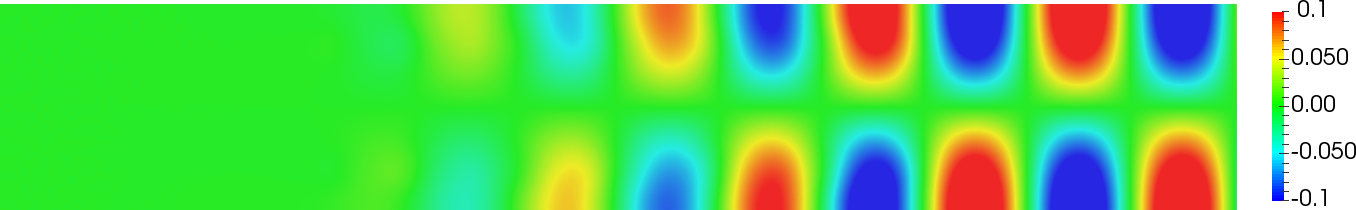}\\
\raisebox{3mm}{\small\mbox{Mode 2}}&\includegraphics[width=7cm]{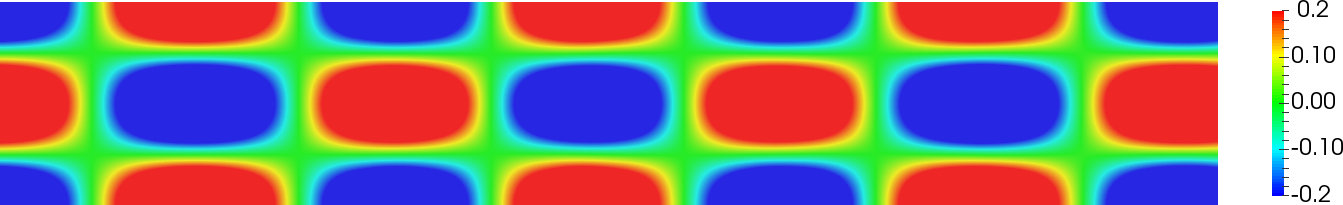}& \includegraphics[width=7cm]{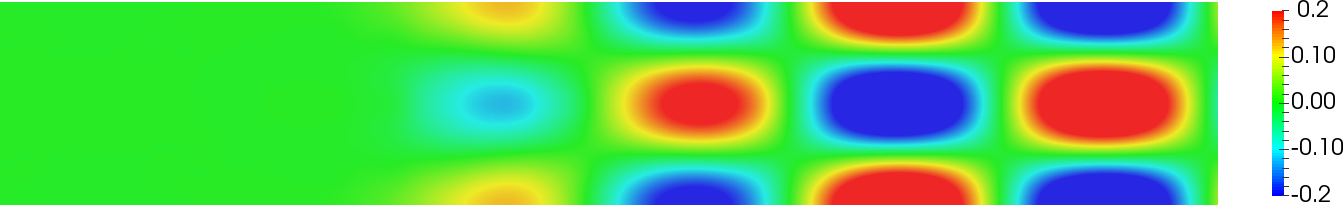}
\end{tabular}
\caption{The line $j$, $j=1,2,3$, represents the real parts of the total field $u_{j-1}^+$ (left) and of the scattered field $u_{j-1}^+-w^+_j$ (right) for the $\rho$ of Figure \ref{rhoSimu5}. \label{ChamprhoSimu5}}
\end{figure}

\newpage
\section{Concluding remarks}\label{Conclusion}

In this work, we have presented a continuation technique to construct non reflecting, invisible or relatively invisible penetrable obstacles in acoustic waveguides. We have provided a complete proof of the method in monomode regime. In multimode regime, that is when the wavenumber is such that several modes can propagate, we have given all the ingredients to implement the method and numerically it gives satisfying results. However in this case, there is still some theoretical work to establish the ontoness of the differentials of the functionals which are involved in the construction. These results, as well as the results of ontoness when one imposes additional constraints on the index (see Section \ref{SectionConstraints}), seem hard to obtain. On the other hand, it would be interesting to explore which kind of constraints it is relevant to impose to the index for applications. For example, how to proceed to prevent the invisible index to become negative? We have focused our attention on the construction of invisible perturbations of the index material. We could have considered in a similar way the question of building invisible perturbations of the geometry (see \cite{BoNa13}). Note that this problem is slightly different (for example, as explained in \cite{na648}, it is harder to impose perfect invisibility) and questions concerning the choices of the functionals as well as proofs of ontoness should be studied carefully. We have worked with equations of acoustic in 2D. The analysis is completely the same in higher dimensions and can be simply adapted to deal with problems of quantum waveguides (Dirichlet boundary condition) or water-waves. Besides, we have imposed invisibility at a given wavenumber. We could have imposed similarly invisibility for several wavenumbers. However we emphasize that the set of measurements should remain discrete. The approach does not allow one to impose invisibility for a continuum of wavenumbers (which may be impossible, see the related works \cite{SoGK07,Norr15,MoAl16,CaMi17,Norr18}).

\section*{Acknowledgements}
The research of Antoine Bera was supported by the DGA, Direction G\'en\'erale de l'Armement.

\bibliography{Bibli}
\bibliographystyle{plain}

\end{document}